\documentclass[12pt]{article}

\addtolength{\topmargin}{-1cm}
\addtolength{\textheight}{2cm}
\addtolength{\oddsidemargin}{-1cm}
\addtolength{\evensidemargin}{-1cm}
\addtolength{\textwidth}{2cm}

\usepackage[utf8]{inputenc}
\usepackage[]{datetime}
\usepackage[noadjust]{cite}
\usepackage{amssymb}
\usepackage{graphicx}
\usepackage{enumerate}
\usepackage{amsfonts}
\usepackage{amsmath}
\usepackage{stmaryrd}
\usepackage{xspace}
\usepackage{paralist}
\usepackage{hyperref}
\usepackage{amsthm}
\usepackage[table]{xcolor} 
\usepackage{tikz}
\usepackage{ifthen}

\hypersetup{
  pdftitle = {Induced minors and well-quasi-ordering},
  colorlinks = true,
  linkcolor = black!30!red,
  citecolor = black!30!green
}

\newcommand{\remref}[1]{\hyperref[#1]{(R\ref*{#1})}}
\newcommand{\itemref}[1]{\hyperref[#1]{(\ref*{#1})}}

\usetikzlibrary{decorations.pathreplacing}
\usetikzlibrary{decorations.pathmorphing}
\usetikzlibrary{decorations.markings}
\tikzset{black node/.style={draw, circle, fill = black, minimum size = 5pt, inner sep = 0pt}}
\tikzset{white node/.style={draw, circle, fill = white, minimum size = 5pt, inner sep = 0pt}}
\tikzset{normal/.style = {draw=none, fill = none}}

\newtheorem{theorem}{Theorem}
\newtheorem{lemma}{Lemma}
\newtheorem{corollary}{Corollary}
\newtheorem{proposition}{Proposition}

\theoremstyle{remark}
\newtheorem{remark}{Remark}

\theoremstyle{definition}

\newcommand{\N}{\mathbb{N}}
\newcommand{\intv}[2]{\left \llbracket #1, #2 \right \rrbracket}
\newcommand{\card}[1]{\left | #1 \right |} 
\DeclareMathOperator{\powset}{\mathcal{P}}
\DeclareMathOperator{\fpowset}{\mathcal{P}^{<\omega}}

\DeclareMathOperator{\cc}{cc} 
\newcommand{\induced}[2]{{#1[#2]}}

\newcommand{\patht}[3]{{#1 #2 #3}}

\newcommand{\lleq}{\preceq} 
\newcommand{\linm}{\leq_\mathrm{im}} 
\newcommand{\nlinm}{\nleq_\mathrm{im}} 
\newcommand{\lisgr}{\leq_\mathrm{isg}} 
\newcommand{\lisgrp}{\leq_\mathrm{isg'}} 
\newcommand{\lctr}{\leq_\mathrm{c}} 
\newcommand{\subseq}{\mathbin{=^\star}}

\DeclareMathOperator{\excl}{Excl_{im}}

\newcommand{\gem}{\mathrm{Gem}} 
\DeclareMathOperator{\lab}{lab} 
\newcommand{\seqb}[1]{\left \langle #1 \right \rangle} 
\newcommand{\kmult}{\mathcal{K}_{{\N}^\star}} 
\newcommand{\htg}{\widehat{K}_4} 
\DeclareMathOperator{\fst}{fst}
\DeclareMathOperator{\lst}{lst}
\newcommand{\opath}{\mathcal{OP}} 
\newcommand{\wm}{\mathcal{WM}} 
\newcommand{\ci}{\mathcal{CI}} 

\newcommand{\arxiv}[1]{\href{http://arxiv.org/abs/#1}{\tt arXiv:#1}}

\title{Induced minors and well-quasi-ordering\thanks{This work was partially done while J.\ Błasiok was student at the Institute of Computer Science, University of Warsaw, Poland and while J.{-F}.\ Raymond was affiliated to LIRMM, Université de Montpellier, France and to the Institute of Computer Science, University of Warsaw, Poland. The research was supported by the Foundation for Polish Science (Jarosław Błasiok and Marcin Kamiński), the (Polish) National Science Centre grants SONATA UMO-2012/07/D/ST6/02432 (Marcin Kamiński and Jean-Florent Raymond) and PRELUDIUM
    2013/11/N/ST6/02706 (Jean-Florent Raymond), the Warsaw Center of Mathematics and Computer Science (Jean-Florent Raymond and Théophile Trunck), and the European Research Council (ERC) under the European
Union’s Horizon 2020 research and innovation programme, ERC consolidator grant DISTRUCT, agreement No 648527 (Jean-Florent Raymond).
    Emails: \href{mailto:jblasiok@g.harvard.edu}{\texttt{jblasiok@g.harvard.edu}}, \href{mailto:mjk@mimuw.edu.pl}{\texttt{mjk@mimuw.edu.pl}}, \href{mailto:raymond@tu-berlin.de}{\texttt{raymond@tu-berlin.de}}, and  \href{mailto:theophile.trunck@ens-lyon.org}{\texttt{theophile.trunck@ens-lyon.org}}.}}

\author{Jarosław Błasiok%
  \thanks{School of Engineering and Applied Sciences, Harvard University, United States.}, %
  Marcin Kamiński%
  \thanks{Institute of Computer Science, University of Warsaw, Poland.}\\ %
  Jean-Florent Raymond%
  \thanks{Technische Universität Berlin, Germany.}, %
  Théophile Trunck%
  \thanks{LIP, ÉNS de Lyon, France.}
}

\date{}
\begin{document}
\maketitle

\begin{abstract}
  A graph $H$ is an induced minor of a graph $G$ if it can be
  obtained from an induced subgraph of $G$ by contracting
  edges. Otherwise, $G$ is said to be $H$-induced
  minor-free.  Robin Thomas showed that $K_4$-induced minor-free
  graphs are well-quasi-ordered by induced minors [\emph{Graphs
    without $K_4$ and well-quasi-ordering}, Journal of Combinatorial
  Theory, Series B, 38(3):240--247, 1985].
  
  We provide a dichotomy theorem for $H$-induced minor-free graphs and
  show that the class of $H$-induced minor-free graphs is
  well-quasi-ordered by induced minors if and only if $H$
  is an induced minor of the $\gem$ (the path on 4 vertices plus a
  dominating vertex) or of the graph obtained by adding a vertex of
  degree 2 to the complete graph on 4 vertices. To this end we prove two
  decomposition theorems which are of independent interest.

  Similar dichotomy results were previously given for subgraphs by Guoli
  Ding in [\emph{Subgraphs and well-quasi-ordering}, Journal of Graph
  Theory, 16(5):489–502, 1992] and for induced subgraphs by Peter
  Damaschke  in [\emph{Induced subgraphs and well-quasi-ordering},
  Journal of Graph Theory, 14(4):427--435, 1990].
\end{abstract}

\section{Introduction}
\label{sec:intro}

A \emph{well-quasi-order}
(\emph{wqo} for short) is a quasi-order which contains no infinite
decreasing sequence and no infinite collection of pairwise incomparable
elements (called an \emph{antichain}).
 One of the most important
results in this field is arguably the theorem by Robertson and
Seymour which states that graphs are well-quasi-ordered by the minor
relation~\cite{Robertson2004325}. Other natural containment relations are not so generous; they usually do not wqo all graphs. In the last decades, much attention has
been brought to the following question: given a partial order $(S,
\lleq)$, what subclasses of $S$ are well-quasi-ordered by $\lleq$?
For instance, 
Fellows et al.\ proved in~\cite{fellows2009well} that graphs with
bounded feedback-vertex-set are well-quasi-ordered by topological minors.
Another result is that of Oum~\cite{doi:10.1137/050629616} who  proved  that graphs of
bounded rank-width are wqo by vertex-minors. Other papers considering
this question include~\cite{JGT:JGT3190140406, Thomas1985240,
  Ding:1992:SW:152782.152791, 2014arXiv1412.2407K, Lui2014,
  JGT:JGT4, Ding20091123, Atminas14b, Petkovsek2002375, DaRaTh10}.

One way to approach this problem is to consider graph classes defined
by excluded substructures.
In this direction, Damaschke proved in~\cite{JGT:JGT3190140406} that a class of graphs
defined by one forbidden induced subgraph $H$ is wqo by the induced subgraph relation
if and only if $H$ is the path on four vertices. Similarly, a bit later Ding proved in \cite{Ding:1992:SW:152782.152791} an analogous result for the subgraph
relation. Other authors also considered this problem (see for
instance~\cite{Kaminski2016well, Korpelainen20111813,JGT:JGT20528,
  Cherlin:2011:FSC:2645908.2646015}). In this paper, we
provide an answer to the same question for
the induced minor relation, which we denote~$\linm$.
This generalizes a result of Thomas who proved that graphs with no $K_4$-minor are wqo by~$\linm$~\cite{Thomas1985240}.
Before stating our main result, let us
introduce two graphs which play a major role in this paper (see~\autoref{fig:h123}). The first
one, $\htg$, is obtained by adding a vertex of degree two to
$K_4$, and the second one, called the $\gem$, is constructed by adding a
dominating vertex to $P_4$.
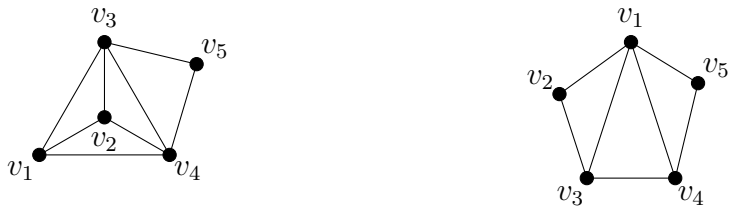
\begin{figure}[ht]
  \centering
  \begin{tikzpicture}[every node/.style = black node, scale = 1]
    \begin{scope}
      \draw (90:1) node[label=$v_3$] {} -- (210:1) node[label=210:$v_1$] {} -- (330:1) node[label=-30:$v_4$] {} -- cycle;
      \draw (90:1) -- (0,0) node[label=-90:$v_2$] {};
      \draw (210:1) -- (0,0);
      \draw (330:1) -- (0,0);
      \draw (90:1) -- (30:1.4142) node[label=30:$v_5$] {} -- (330:1);
    \end{scope}
    \begin{scope}[xshift = 7cm]
      \draw (162:1) node[label = 162:$v_2$] {}
      -- (234:1) node[label = 234:$v_3$] {}
      -- (306:1) node[label = 306:$v_4$] {}
      -- (387:1) node[label = 387:$v_5$] {}
      -- (90:1) node[label = 90:$v_1$] {} -- cycle;
      \draw (90:1) -- (234:1);
      \draw (90:1) -- (306:1);
    \end{scope}
  \end{tikzpicture}
  \caption{The graph~$\htg$ (on the left) and the  $\gem$
    (on the right).}
  \label{fig:h123}
\end{figure}

\section{Induced minors and well-quasi-ordering}

Our main result is the following.

\begin{theorem}[Dichotomy Theorem]\label{t:dich}
  Let $H$ be a graph.  The class of $H$-induced minor-free graphs is wqo by
  $\linm$ iff $H$ is an induced minor of $\htg$ or the $\gem.$
\end{theorem}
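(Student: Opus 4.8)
The plan is to prove the two implications separately.

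\textbf{Sufficiency.} Suppose $H \linm \htg$ or $H \linm \gem$. Since $\linm$ is transitive, $A \linm B$ implies $\excl(A) \subseteq \excl(B)$ --- any graph having $B$ as an induced minor has $A$ as one too --- so $\excl(H)$ is a subclass of $\excl(\htg)$ or of $\excl(\gem)$. As every subclass of a wqo class is wqo, it is enough to establish (a) the class of $\htg$-induced minor-free graphs is wqo by $\linm$, and (b) the class of $\gem$-induced minor-free graphs is wqo by $\linm$; these are exactly the two decomposition theorems announced in the abstract. Statement (a) extends Thomas's theorem, since $K_4 \linm \htg$: I would first prove a structure theorem expressing every $\htg$-induced minor-free graph in terms of $K_4$-induced minor-free graphs (wqo by Thomas) together with a bounded amount of extra structure glued along separators of bounded size, and then deduce wqo by the standard labelled-wqo argument --- pass to graphs carrying a labelled boundary of bounded size, apply Higman's lemma to the sequences of pieces attached to such a boundary, and induct over the decomposition, as in Robertson and Seymour's treatment of bounded tree-width. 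For (b), forbidding the $\gem$ (a vertex joined to a $P_4$) as an induced minor should tightly restrict the induced paths that can occur inside neighbourhoods; the plan is to extract an analogous structure theorem --- into modular or otherwise controlled pieces --- and conclude wqo by the same machinery.

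\textbf{Necessity.} I argue contrapositively: given $H$ with $H \not\linm \htg$ and $H \not\linm \gem$, I must exhibit an infinite $\linm$-antichain of $H$-induced minor-free graphs. Dually to the observation above, the class $\mathcal B$ of graphs that are induced minors of neither $\htg$ nor $\gem$ is closed upward under $\linm$, hence determined by its $\linm$-minimal members; and if $F$ is such a minimal member then $F-v \linm \htg$ or $F-v \linm \gem$ holds for every vertex $v$, so $F$ has at most $6$ vertices. Therefore $\mathcal B$ has only finitely many minimal members, and a finite computation identifies them: they are $3K_1$, $2K_2$, $K_3 \cup K_1$, $C_4$, $K_5$, and $K_5-e$ ($K_5$ with one edge deleted). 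It then suffices to produce, for each of these six graphs $F$, an explicit infinite $\linm$-antichain inside $\excl(F)$, because whenever $F \linm H$ we have $\excl(F) \subseteq \excl(H)$, so that antichain also sits in $\excl(H)$. For $3K_1$ one takes complements of a suitable infinite family of triangle-free graphs (the class of graphs of independence number at most $2$ is readily seen not to be wqo by $\linm$); for $K_5$ one uses graphs built from many copies of $K_4$ arranged cyclically, so that no copy can be deleted or contracted away; the remaining four obstructions admit similar concrete antichains.

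\textbf{Main obstacle.} The bulk of the difficulty is in the sufficiency direction: proving the two decomposition theorems and, above all, verifying that the gluing operations they involve preserve well-quasi-ordering. Extracting wqo from a structural decomposition is never automatic --- one is forced to work with graphs equipped with labelled boundaries and to prove that this enriched class is wqo, which is where the genuine combinatorial content lies (already present in Thomas's original $K_4$ argument). By comparison, the necessity direction is comparatively routine once the antichains are in hand: the reduction to the six minimal obstructions is a finite check, and confirming that each candidate family is an $\linm$-antichain avoiding its obstruction is a direct, if slightly tedious, verification.
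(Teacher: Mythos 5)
Your overall architecture matches the paper on the sufficiency side (reduce to showing $\excl(\htg)$ and $\excl(\gem)$ are wqo, then use that $H' \linm H$ implies $\excl(H') \subseteq \excl(H)$), but your necessity argument takes a genuinely different and, as far as the reduction goes, correct route. The class $\mathcal{B}$ of graphs that are induced minors of neither $\htg$ nor the $\gem$ is indeed upward closed, its $\linm$-minimal elements have at most six vertices, and your list $3\cdot K_1$, $2\cdot K_2$, $K_3 + K_1$, $C_4$, $K_5$, $K_5^-$ is in fact complete (a six-vertex minimal element would have every vertex-deleted subgraph isomorphic to $\htg$ or $\gem$, forcing $4\,|E| = 6\cdot 7$, which is impossible). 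So it would suffice to exhibit an infinite $\linm$-antichain inside $\excl(F)$ for each of these six graphs. This is a cleaner framing than the paper's, which instead analyses $\overline{H}$ (linear forest, bounded components, a table of candidates) against a handful of antichains.

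The genuine gap is that those six antichains are the entire content of the necessity half, and you neither construct nor verify them. For $3\cdot K_1$, and in fact also for $2\cdot K_2$ and $K_3+K_1$, the antiholes $\seqb{\overline{C_n}}_{n\geq 6}$ work (this is the paper's $\mathcal{A}_{\overline{C}}$, and proving it is an antichain already takes the complement/linear-forest argument), so "complements of triangle-free graphs" points the right way. But your family for $K_5$ ("copies of $K_4$ arranged cyclically") is unspecified and unverified; the known antichain avoiding $K_5$ is Thomas's alternating double wheels, a planar construction of a different shape, and proving pairwise $\linm$-incomparability of any candidate family is exactly the nontrivial step. Worse, for $C_4$ and $K_5^-$ nothing "similar" exists: one needs Ding's interval-graph antichain $\mathcal{A}_D$ (interval graphs exclude $C_4$ as induced minor) and the Matou\v{s}ek--Ne\v{s}et\v{r}il--Thomas antichain $\mathcal{A}_M$ (whose members exclude $K_5^-$), both of which the paper imports from the literature precisely because their verification is substantial. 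Calling this direction "routine once the antichains are in hand" begs the question: having them in hand is the problem. Secondarily, your sufficiency sketch guesses at structure theorems ("$K_4$-free pieces glued along bounded-size separators", "modular pieces") that do not match what is actually provable: the true decompositions are that a 2-connected graph in $\excl(\htg)$ is $K_4$-induced-minor-free, or a subdivision of $K_4$, $K_{3,3}$ or the prism, or cycle-plus-complete-multipartite, and that a 2-connected graph in $\excl(\gem)$ becomes a disjoint union of cographs and paths after deleting at most six vertices; and converting these into wqo statements needs bespoke lemmas (labelled subdivisions ordered by contraction, labelled complete multipartite graphs, labelled cographs via Kruskal's tree theorem), not a generic bounded-boundary Higman induction.
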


Our proof naturally has two parts: for different graphs $H$, we
need to show wqo of $H$-induced minor-free graphs or exhibit an
$H$-induced minor-free antichain.

\paragraph{Classes that are wqo.}

The following two theorems describe the structure of graphs with $H$
forbidden as an induced minor, when $H$ is  $\htg$ and the $\gem$,
respectively.

\begin{theorem}[Decomposition of $\htg$-induced minor-free graphs]\label{t:dec-h3}
  Let $G$ be a 2-connected graph such that $\htg \not \linm G$. Then one of the following holds:
  \begin{enumerate}[(i)]
  \item $K_4 \not \linm G$; or\label{h3-1}
  \item $G$ is a subdivision of a graph among $K_4$, $K_{3,3}$, and
    the prism; or\label{h3-2}
  \item $V(G)$ has a partition $(W,M)$ such that $G[W]$ is a wheel
    on at most 5 vertices and $G[M]$ is a complete multipartite graph; or\label{h3-3}
  \item $V(G)$ has a partition $(C,I)$ such that
    $\induced{G}{C}$ is a cycle, $I$ is an independent set and every vertex of $I$ has the same neighborhood on $C$.\label{h3-4}
  \end{enumerate}
\end{theorem}

\begin{theorem}[Decomposition of $\gem$-induced minor-free graph]\label{t:decgem}
  Let $G$ be a 2-connected graph such that $\gem \not \linm G$. Then $G$ has a
  subset $X \subseteq V(G)$ of at most six vertices such that
  every connected component of $G \setminus X$ is either a cograph or
  a path whose internal vertices are of degree two in~$G$.
\end{theorem}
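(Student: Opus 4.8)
The plan is to construct the set $X$ by hand, in one of two regimes: when $G$ is ``sparse'' I will take $X$ to be a bounded set of branch vertices, and when $G$ is ``dense'' I will take (essentially) the endpoints of its subdivided edges. The basic tool, used throughout, is the following observation: \emph{if $\gem \not\linm G$ then for every connected set $D \subseteq \vertices(G)$ the graph $\induced{G}{\neigh(D)\setminus D}$ is $P_4$-free}. Indeed, if $\induced{G}{\neigh(D)\setminus D}$ had an induced $P_4$, then contracting $D$ to a single vertex inside the induced subgraph on $D \cup (\neigh(D)\setminus D)$ would turn that $P_4$, together with the new vertex as a dominating vertex, into an induced $\gem$; this is an induced minor of $G$. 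In particular every $\induced{G}{\neigh(v)}$ is a cograph, and already if $G$ has a connected dominating set of size at most six (for instance, a dominating vertex) we are done, since then $G\setminus X = \induced{G}{\neigh(X)\setminus X}$ is a disjoint union of cographs.

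First I would dispatch the graphs with at most two vertices of degree $\ge 3$ (cycles and theta-like graphs), which are immediate, and in all other cases pass to the \emph{topological reduction} $\tilde G$ of $G$ --- obtained by repeatedly suppressing degree-two vertices. Suppressing a degree-two vertex is an induced-minor operation (realize it by contracting one of the two incident edges), so $\tilde G$ is $2$-connected, $\gem$-induced-minor-free, of minimum degree at least three, $G$ is a subdivision of $\tilde G$, and the components of $G\setminus\vertices(\tilde G)$ are the interiors of the subdivided edges, i.e.\ paths all of whose internal vertices have degree two in $G$. Hence if $\card{\vertices(\tilde G)}\le 6$ we take $X = \vertices(\tilde G)$. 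The combinatorial heart of the argument is then the dichotomy, to be proved by a (finite) case analysis: \emph{every $2$-connected $\gem$-induced-minor-free graph of minimum degree at least three is either a cograph or has at most six vertices}. The extremal non-cograph on six vertices is the prism (which is $\gem$-free); every ``cubic-like'' non-cograph on seven or more vertices can be shown to contain $\gem$ as an induced minor, and this is the source of the constant $6$ (together with $K_{3,3}$, whose subdivisions are $\gem$-free).

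It remains to handle a $\gem$-induced-minor-free $G$ that is a subdivision of a cograph $\tilde G$ with $\card{\vertices(\tilde G)}\ge 7$ and minimum degree at least three. Let $R\subseteq\vertices(\tilde G)$ be the set of endpoints of the subdivided edges. I claim $\card R\le 6$: otherwise, since $\tilde G$ is a $2$-connected cograph, hence a join $H_1+\dots+H_t$ with $t\ge 2$, one can exploit the join edges to build four ``short'' connected sets marching along to realize the path of the $\gem$ and one spanning-tree-like connected set as its apex, adjacent to all four --- the same kind of construction that shows, for instance, that $\gem$ is an induced minor of the full subdivision of $K_5$. Granting $\card R\le 6$, put $X = R$: then $G\setminus X$ is the disjoint union of the subdivided paths (all of whose internal vertices have degree two in $G$, both endpoints of each subdivided edge lying in $R$) and of $\induced{\tilde G}{\vertices(\tilde G)\setminus R}$, which is an induced subgraph of a cograph, hence a disjoint union of cographs. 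This completes the proof modulo the two quantitative claims above.

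The step I expect to be the main obstacle is exactly those two quantitative claims with the sharp constant $6$: ruling out non-cograph $\gem$-free graphs of minimum degree $\ge 3$ on seven or more vertices, and ruling out seven or more subdivided-edge endpoints in the cograph case. Both reduce to building an explicit induced $\gem$ inside $G$, and the delicate part is to choose the five branch sets pairwise disjoint and so that they induce \emph{exactly} the edge set of the $\gem$ --- in particular, keeping the apex branch set connected while it reaches all four vertices of the path and introduces none of the forbidden non-edges $v_2v_4,\ v_2v_5,\ v_3v_5$.
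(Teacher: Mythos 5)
Your plan is an outline rather than a proof: the entire content of the theorem has been pushed into the two ``quantitative claims'' that you explicitly leave open, namely (a) every $2$-connected $\gem$-induced-minor-free graph of minimum degree at least three is a cograph or has at most six vertices, and (b) in the cograph-subdivision case at most six branch vertices are incident with subdivided edges. Neither is obvious, and neither reduces to a finite case analysis in any immediate sense: a priori a $2$-connected $\gem$-free graph of minimum degree three may consist of a small core with arbitrarily large cograph components attached (this is exactly the shape the theorem itself permits), so proving (a) means showing that in the minimum-degree-three regime such attachments force the whole graph to be a cograph. That is essentially the same work the paper does via Ponomarenko's theorem for the $3$-connected case and the analysis of $K_2$- and $\overline{K_2}$-cuts (rooted diamonds, components dominated by a cut vertex, nested neighbourhoods on an induced cycle); your $P_4$-free-neighbourhood observation, while correct, is nowhere near sufficient, and your evidence for (b) (``the same kind of construction that shows $\gem$ is an induced minor of the full subdivision of $K_5$'') is an analogy, not an argument. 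I could not refute (a) or (b) -- several natural candidates (the cube, the prism with a true twin added, $K_{3,4}$ with one subdivided edge, large graphs glued along a $K_2$-cut) all turn out to contain $\gem$ as an induced minor -- but verifying this in each case already required building explicit models, which is precisely the delicate step you defer; so as it stands there is a genuine gap at the heart of the argument.

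There is also a technical flaw in the reduction itself: suppressing degree-two vertices need not produce a simple graph. If two branch vertices are joined both by an edge and by a subdivided path (or by two subdivided paths), the topological reduction $\tilde G$ is a multigraph; $\htg$ itself is a $2$-connected $\gem$-free graph with four vertices of degree at least three whose reduction has a parallel edge, and it is not covered by your preliminary step ``at most two vertices of degree $\ge 3$''. Your dichotomy (a) and the cograph machinery are stated for simple graphs, so this case must either be excluded by a separate argument or folded into the case analysis. By contrast, the paper never forms $\tilde G$: it works directly with cuts of $G$, proves that each side of a $K_2$-cut is dominated by a cut vertex or is an induced path (via the rooted-diamond lemma), and in the $\overline{K_2}$-cut case produces an induced cycle with at most six vertices of degree greater than two; the constant six arises there as $3+3$ from the two sides of the cut, not from an extremal graph count. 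If you can actually prove (a) and (b) with the sharp constants, your route would be a genuinely different and arguably cleaner decomposition, but at present those proofs are missing.
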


Using the two above structural results, we are able to show the
well-quasi-ordering of the two classes with respect to induced
minors. For every graph $H$, a graph not containing $H$ as induced
minor is said to be \emph{$H$-induced minor-free}.

\begin{theorem}\label{t:exclh3-wqo}
  The class of $\htg$-induced minor-free graphs is wqo by~$\linm$.
\end{theorem}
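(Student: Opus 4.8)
The plan is to combine the structural description of Theorem~\ref{t:dec-h3} with the standard machinery for lifting well-quasi-ordering from $2$-connected graphs to all graphs. First I would reduce to connected graphs: the class of $\htg$-induced minor-free graphs is closed under $\linm$ and under disjoint unions (as $\htg$ is connected), so sending a graph to the finite multiset of its connected components and invoking Higman's lemma reduces the statement to the wqo of \emph{connected} $\htg$-induced minor-free graphs, using that whenever each component of $G$ is an induced minor of a distinct component of $G'$ one has $G \linm G'$ (no edges run between components of $G'$). To get from connected to $2$-connected graphs I would use the block-cut tree together with Kruskal's tree theorem, as in Thomas~\cite{Thomas1985240} and Ding~\cite{Ding:1992:SW:152782.152791}: pass to graphs whose vertices carry labels from an auxiliary wqo $Q$, ordered by the label-respecting induced minor relation (every branch set must contain a vertex whose label dominates the label of its image); encode a connected graph as its block-cut tree, in which every block carries its incident cut vertices as roots and each root is labelled by the labelled rooted graph hanging from it; and run a Nash-Williams minimal-bad-sequence argument, so that the labelled rooted pieces of a hypothetical counterexample, being strictly smaller, form a wqo by minimality, whence Kruskal's theorem yields a comparable pair. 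This reduces the theorem to the following statement: \emph{for every wqo $Q$, the $Q$-labelled $2$-connected $\htg$-induced minor-free graphs are wqo by the label-respecting induced minor relation.}

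By Theorem~\ref{t:dec-h3}, every such graph lies in at least one of three families: (A) $K_4 \not\linm G$; (B) $G$ is a subdivision of $K_4$, $K_{3,3}$ or the prism; (C) $\vertices(G)$ has a partition $(C,M)$ with $\induced{G}{C}$ an induced cycle, $\induced{G}{M}$ complete multipartite, and every vertex of $C$ complete or anticomplete to $M$. Since a finite union of sets each well-quasi-ordered by a common quasi-order is again well-quasi-ordered, it suffices to show that each family, $Q$-labelled, is wqo. For (A), $K_4$-induced minor-free graphs are wqo by $\linm$ by Thomas' theorem~\cite{Thomas1985240}, and I would invoke the routine strengthening of that result --- or of the structural description underlying it --- to $Q$-labelled graphs. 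For (B), split according to which base graph is subdivided and fix one, with edge set $F$: a $Q$-labelled subdivision of it is determined, up to isomorphism and modulo the finite automorphism group of the base graph, by the labels of the boundedly many branch vertices together with, for each $e \in F$, the finite word of labels read along the subdivided path --- that is, by a bounded tuple with entries in $Q$ and in $Q^{<\omega}$; by Higman's lemma this ranges over a wqo, and its product order is finer than $\linm$ since shortening a subdivided path while respecting label-domination is realized by deletions and contractions that leave the branch vertices and the incidences of the base graph intact. Hence family (B) is wqo.

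For family (C) I would encode $G$ (with a fixed choice of $(C,M)$) by the pair consisting of (i) the labelled complete multipartite graph $\induced{G}{M}$, recorded as a finite multiset of parts, each part a finite multiset of labels in $Q$; and (ii) the cycle $\induced{G}{C}$ with every vertex labelled by $Q$ and marked according to whether it is complete or anticomplete to $M$, recorded as a cyclic word in which each maximal run of anticomplete vertices is grouped into a single letter of $Q^{<\omega}$, so that the word lives over the wqo $Q \sqcup Q^{<\omega}$. Iterated Higman's lemma makes (i) range over a wqo, while cyclic words over a wqo are wqo under the relevant cyclic-subword order (a standard application of Higman's lemma to a linearization, as shortening a cycle while respecting label-domination merely contracts each intervening segment into a retained vertex). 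So family-(C) graphs are encoded in a wqo whose product order is finer than $\linm$: to realize $G \linm G'$ one keeps $\induced{G}{M}$ as an induced subgraph of the multipartite part of $G'$, keeps the matched cycle-vertices of $G'$ (of the same complete/anticomplete type), and contracts each unmatched cycle-segment of $G'$ into an adjacent retained vertex chosen so that the complete/anticomplete status is preserved --- an anticomplete vertex absorbed into a neighbour inherits exactly that neighbour's relation to $M$, and absorbing a complete vertex only into a retained complete vertex keeps that vertex complete; the grouping of anticomplete runs in the encoding is precisely what lets one route the unmatched segments correctly. Thus family (C) is wqo, and the three cases together finish the proof.

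I expect essentially all the difficulty to sit in the first step. Given Theorem~\ref{t:dec-h3}, families (B) and (C) are bookkeeping --- though (C) needs some care to keep the marks consistent under contraction, which is why the grouped encoding is used --- but making the block-cut tree reduction work for the \emph{induced} minor relation is delicate: cut vertices must be tracked as roots; a single block from Theorem~\ref{t:dec-h3} may contain unboundedly many of them; and one needs exactly the right notion of labelled rooted induced minor, together with a Kruskal-type (or minimal-bad-sequence) argument, for comparability of two block-cut trees plus comparability of the corresponding labelled blocks to genuinely force comparability of the two graphs. The labelled strengthening of Thomas' theorem required for family (A) is a secondary but related point calling for some care.
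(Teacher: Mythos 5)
Your proposal follows the same skeleton as the paper's proof: reduce to labelled $2$-connected graphs, invoke \autoref{t:dec-h3}, and handle the subdivision and cycle-multipartite families by Higman-type encodings. Your treatment of family (B) is essentially \autoref{l:wqo-sub} (the paper encodes a labelled subdivision as a tuple of labelled oriented paths, one per edge of the base graph, and in fact obtains wqo under contraction; note that the realization uses contractions only, not deletions of interior vertices), and your grouped cyclic-word encoding of family (C) is a repackaging of \autoref{t:c-kmp-wqo}, where the paper writes such a graph as a sequence of labelled oriented paths glued end-to-end (the glue vertices being exactly the cycle vertices joined to the multipartite part) together with a labelled complete multipartite graph; your observation that maximal anticomplete runs are separated by complete vertices, so that unmatched material can always be absorbed into a complete branch set, is precisely the routing performed by the paper's model. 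The reduction from arbitrary to labelled $2$-connected graphs, which you propose to re-derive via block-cut trees and a Kruskal/minimal-bad-sequence argument, is simply imported by the paper as \autoref{p:2c-labels} from \cite{fellows2009well}; re-proving it is legitimate but unnecessary.

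The one substantive divergence is where Thomas' theorem enters, and it is the weak point of your plan. Because you reduce \emph{everything} to $Q$-labelled $2$-connected graphs first, family (A) requires that $Q$-labelled ($2$-connected) $K_4$-induced minor-free graphs be wqo under the label-respecting induced minor relation; you call this a ``routine strengthening'' of \cite{Thomas1985240}, but the paper neither proves nor cites such a labelled version (\autoref{p:thomas} is stated unlabelled), and Thomas' proof is a substantial argument whose labelled extension cannot simply be asserted. The paper orders the reductions differently precisely to avoid this: using \autoref{r:union-wqo} it first splits $\excl(\htg)$ into the $K_4$-induced minor-free graphs, disposed of at once by the unlabelled \autoref{p:thomas}, and the graphs containing $K_4$, and only to the latter does it apply the labelled $2$-connected reduction, after which \autoref{t:dec-h3} leaves only the subdivision and cycle-multipartite cases. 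You should either adopt this ordering or actually prove the labelled strengthening; as written, that step is a genuine gap rather than bookkeeping, while the rest of your outline matches the paper's argument.
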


\begin{theorem}\label{t:exclgem-wqo}
  The class of $\gem$-induced minor-free graphs is wqo by~$\linm$.
\end{theorem}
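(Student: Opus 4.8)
The plan is to encode every $\gem$-induced minor-free graph by an object living in a fixed well-quasi-ordered set, so that the order on these objects refines $\linm$; since a quasi-order is a wqo as soon as every infinite sequence $(G_i)_{i\in\N}$ contains two terms with $G_i \linm G_j$ for some $i<j$, this is enough. Concretely I want a map $G \mapsto \mathrm{enc}(G)$ into a quasi-order $(\mathcal E,\preceq)$ such that \textbf{(a)} $\mathrm{enc}(G_1)\preceq \mathrm{enc}(G_2)$ implies $G_1 \linm G_2$, and \textbf{(b)} $(\mathcal E,\preceq)$ is a wqo; a $\linm$-bad sequence would then map to a $\preceq$-bad sequence, a contradiction.

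First I would reduce to the 2-connected case. Disconnected graphs are dealt with by Higman's lemma applied to the multiset of connected components (each still $\gem$-induced minor-free), using that $\linm$ is compatible with disjoint union. Connected graphs with a cut vertex are dealt with by the usual block-cut-tree reduction: such a graph is encoded by its block-cut tree, a finite tree whose nodes carry the blocks (with their incident cut vertices marked) and the cut vertices; since $\gem$ is 2-connected, these decorated trees are well-quasi-ordered by a Kruskal-type embedding refining $\linm$, provided 2-connected blocks with a bounded number of marked vertices are themselves wqo by $\linm$ — which follows from the 2-connected analysis below, carried out in a boundaried form. So from now on $G$ is 2-connected, and Theorem~\ref{t:decgem} supplies a set $X\subseteq\vertices(G)$ with $\card X\le 6$ such that every component of $G-X$ is a cograph or a path whose internal vertices have degree $2$ in $G$.

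Next I would build the encoding. Identify $\vertices(G[X])$ with $\{1,\dots,k\}$, $k=\card X\le 6$; the graph $G[X]$ then ranges over a \emph{finite} set. Record each component $C$ of $G-X$ together with the map $v\mapsto N_G(v)\cap X\subseteq\{1,\dots,k\}$: if $C$ is a cograph this is a cograph with vertices labelled by subsets of a set of size at most $6$; if $C$ is a thin path, its internal vertices have no neighbour in $X$, so $C$ is just a triple (label of one endpoint, number of vertices, label of the other endpoint), an element of $F\times(\N,\le)\times F$ for a finite set $F$. Put $\mathrm{enc}(G)=(G[X],\mathcal M_G)$ with $\mathcal M_G$ the multiset of these decorated components, and let $\preceq$ hold between $\mathrm{enc}(G_1)$ and $\mathrm{enc}(G_2)$ when $G_1[X_1]=G_2[X_2]$ as graphs on $\{1,\dots,k\}$ and $\mathcal M_{G_1}$ embeds into $\mathcal M_{G_2}$ for the Higman order built from: labelled induced subgraph on decorated cographs, and ``equal endpoint labels, not-larger number of vertices'' on decorated thin paths. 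Property (b) is then routine: labelled cographs with labels in a finite set are well-quasi-ordered by labelled induced subgraphs (a standard consequence of Kruskal's tree theorem applied to labelled cotrees), decorated thin paths are wqo as a subset of $F\times(\N,\le)\times F$, and a finite disjoint union, a finite product, and the Higman multiset order over wqos are wqos.

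For property (a): given $\mathrm{enc}(G_1)\preceq\mathrm{enc}(G_2)$, realise $G_1$ as an induced minor of $G_2$ by identifying $X_1$ with $X_2$ through the identity on $\{1,\dots,k\}$, deleting from $G_2$ every component of $G_2-X_2$ missed by the Higman embedding, and embedding each remaining used component $C_1$ of $G_1-X_1$ into its image $C_2$ in $G_2-X_2$ — as an induced subgraph if $C_1$ is a cograph, and, if $C_1$ is a thin path, by keeping its first endpoint and contracting the extra tail of $C_2$ into a single vertex in the role of the second endpoint. All adjacencies are respected: inside $X$ because the two graphs on $\{1,\dots,k\}$ coincide; between $X$ and a used cograph component because the embedding preserves labels, hence $X$-neighbourhoods; for the contracted tail because its internal vertices carry the empty label, so the union of labels over the branch set equals the endpoint's label; and between two used components because they lie in distinct components of $G_2-X_2$. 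Hence $G_1\linm G_2$. The step I expect to be most delicate is precisely making (a) — and its boundaried counterpart used in the connectivity reduction — watertight: contraction is what lets a small decorated component be absorbed by a larger one, yet contraction merges the $X$-neighbourhoods of the contracted vertices and could therefore corrupt the adjacency pattern to $X$. What rescues the argument is that on cographs one may restrict to the induced-subgraph relation (so no contraction happens there), and that on a thin path contraction only ever merges one $X$-attached endpoint with degree-$2$ internal vertices, which have no neighbour in $X$.
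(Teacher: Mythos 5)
Your treatment of the 2-connected case is essentially the paper's: you use \autoref{t:decgem} to get an apex set $X$ with $|X|\le 6$, and your encoding by $G[X]$ together with attachment-labelled cographs and thin paths is, up to presentation, the paper's \autoref{l:deldistwqo} (with labelled cographs handled by cotrees/Kruskal as in \autoref{t:cographs-wqo}, and your argument for property (a), including the contraction of the tail of a longer thin path onto its far endpoint, is sound because internal path vertices have no neighbours in $X$).

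The genuine gap is the reduction to the 2-connected case. Your block-cut-tree sketch rests on the premise that blocks come ``with a bounded number of marked vertices'', but a block of a connected graph may contain arbitrarily many cut vertices (e.g.\ a long cycle every vertex of which is a cut vertex), so the boundaried objects you would need are not of bounded boundary size, and your finite-label 2-connected analysis does not apply to them. Moreover, even granting a wqo on decorated blocks, a Kruskal-type embedding of the block trees does not by itself control \emph{which} marked vertex of the image block receives which subtree, which is exactly what is needed to turn a tree embedding into an induced minor model; this matching problem is why the standard route (the paper's \autoref{p:2c-labels}, due to Fellows et al.) instead encodes the dangling parts as vertex labels drawn from an \emph{arbitrary} wqo and requires the 2-connected statement in that fully labelled form. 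Your encoding only carries labels from a finite set (subsets of $X$), so it cannot feed either that label-based reduction or a correct block-tree recursion as written. The fix is routine but is a real missing ingredient: redo the 2-connected analysis for $(S,\lleq)$-labelled graphs with $(S,\lleq)$ an arbitrary wqo, replacing your ``endpoint labels plus length in $(\N,\le)$'' for thin paths by Higman's lemma on label sequences with endpoints preserved (the paper's \autoref{l:path-wqo}) and using the labelled-cograph wqo over an arbitrary wqo (\autoref{t:cographs-wqo}), and then invoke \autoref{p:2c-labels} rather than an ad hoc Kruskal argument on block trees.
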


\paragraph{Organization of the paper.}
After a preliminary section introducing notions and notation used in
this paper, we present in \autoref{sec:antichains} several
infinite antichains for induced minors. \autoref{sec:dicho} is
devoted to the proof of \autoref{t:dich}, assuming
\autoref{t:exclh3-wqo} and~\autoref{t:exclgem-wqo}, the proof of
which are respectively given in \autoref{sec:wqohtg}
and~\autoref{sec:wqogem}. Finally, we give in \autoref{sec:fin} some
directions for further research.

\section{Preliminaries}
\label{s:prelim}

The notation $\intv{i}{j}$ stands for the interval of integers
$\{i, \dots, j\}$. We denote by $\powset(S)$ the power set of a
set~$S$ and by $\fpowset(S)$ the set of all its finite subsets.

\subsection{Graphs and classes}
\label{s:gc}

The graphs in this paper are simple and loopless. Given a graph $G,$ $V(G)$ denotes its vertex set and
$E(G)$ its edge set. For every positive integer $n$, $K_n$ is the complete graph on $n$
vertices and $P_n$ is the path on $n$ vertices. For every integer $n\geq 3$, $C_n$ is the cycle on $n$ vertices. For $H$ and $G$
graphs, we write $H + G$ the disjoint union of $H$ and $G$. The \emph{complement} of a graph $G$, denoted by $\overline{G}$, is obtained by remplacing every edge by a non-edge, and vice-versa. Also, for
every $k \in \N$, $k \cdot G$ is the disjoint union of
$k$ copies of $G$.  For every pair $u,v$ of vertices a path $P$ there
is exactly one subpath in $P$ between $u$ and $v$, that we denote by
$\patht{u}{P}{v}$. Two vertices $u,v \in V(G)$ are said to be
\emph{adjacent} if $\{u,v\} \in E(G).$ The \emph{neighborhood} of
$v \in V(G),$ denoted $N_G(v),$ is the set of vertices
that are adjacent to $v.$ If $H$ is a subgraph of $G$, we write
$N_H(v)$ for $N_G(v) \cap V(H).$ Given two sets
$X,Y$ of vertices of a graph, we say that there is an edge between $X$
and $Y$ (or that $X$ and $Y$ are adjacent) if there is $x \in X$ and
$y \in Y$ such that $\{x,y\} \in E(G).$ The number of connected
components of a graph $G$ is denoted $\cc(G).$ We call \emph{prism}
the complement of~$C_6$.

A \emph{cograph} is a graph not containing the path on four vertices
as induced subgraph.
The following notion will be used when decomposing graphs not
containing $\gem$ as induced minor.
An induced subgraph of a graph $G$ is said to be \emph{basic in $G$}
if it is either a cograph, or an induced path whose internal vertices
are of degree two in~$G$.
A \emph{linear forest} is a disjoint union of paths.
The \emph{closure} of a class $\mathcal{G}$ by a given operation is
the class obtained from graphs of $\mathcal{G}$ by a finite
application of this operation.

\paragraph{Complete multipartite graphs.} A graph $G$ is said to be \emph{complete multipartite} if its vertex
set can be partitioned into sets $V_1, \dots, V_k$ (for some positive
integer $k$) in a way such that two vertices of $G$ are adjacent iff they belong to
different~$V_i$'s. The class of complete multipartite graphs is
referred to as~$\kmult.$

\paragraph{Wheels.}
  For every positive integer $k$, a \emph{$k$-wheel} is a graph obtained from $C +
  K_1,$ where $C$ is an induced cycle of order at least $k,$ by connecting the
  isolated vertex to $k$ distinct vertices of the cycle. $C$ is said
  to be the \emph{cycle} of the $k$-wheel, whereas the vertex
  corresponding to $K_1$ is its \emph{center}.

\paragraph{Cutsets.}
In a graph $G$, a $K_2$-cutset (resp. $\overline{K_2}$-cutset) is a subset $S
\subseteq V(G)$ such that $G-S$ is not connected and
$\induced{G}{S}$ is isomorphic to $K_2$ (resp.\ $\overline{K_2}$).

\paragraph{Labels and roots.}
  Let $(\Sigma, \lleq)$ be a poset. A $(\Sigma, \lleq)$-labeled graph
  is a pair $(G, \lambda)$ such that $G$ is a graph, and
  $\lambda \colon V(G) \to \fpowset(\Sigma)$ is a function referred as the
  \emph{labeling of the graph}. For the sake of simplicity, we will refer to the labeled graph
  of a pair $(G, \lambda)$ by $G$ and to $\lambda$ by $\lambda_G$. If $\mathcal{G}$
  is a class of (unlabeled) graphs, $\lab_{(\Sigma, \lleq)}(\mathcal{G})$ denotes the
  class of $(\Sigma, \lleq)$-labeled graphs of $\mathcal{G}.$ Observe that any
  unlabeled graph can be seen as a $\emptyset$-labeled~graph.
  A \emph{rooted graph} is a graph with a distinguished edge called \emph{root}.

  Labels will allow us to focus on labeled 2-connected graphs, as stated in the following proposition.
  \begin{proposition}[\cite{fellows2009well}]\label{p:2c-labels}
    Let $\mathcal{G}$ be a class of graphs that is closed by taking induced subgraphs. If for any wqo $(S, \lleq)$
    the class of $(S,\lleq)$-labeled 2-connected graphs of $\mathcal{G}$
    is wqo by $\linm$, then $\mathcal{G}$ is wqo by $\linm$.
  \end{proposition}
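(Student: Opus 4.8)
The plan is to bootstrap the $2$-connected case up to the general case through two reductions — from arbitrary graphs to connected graphs, and from connected graphs to their $2$-connected blocks — each of which is handled by a well-quasi-ordering statement for sequences or trees, with the graph labels remembering the ``interface'' data that is destroyed when a graph is cut into smaller pieces. First, $\linm$ is well-founded: if $H \linm G$ then $\card{\vertices(H)} + \card{\edges(H)} \le \card{\vertices(G)} + \card{\edges(G)}$, with equality only when $H \cong G$, since each elementary operation (deleting a vertex, deleting an edge, contracting an edge) strictly decreases this quantity unless it does nothing. Hence there is no infinite strictly $\linm$-decreasing sequence, and it suffices to show that every infinite sequence $(G_i)_{i\in\N}$ over $\mathcal{G}$ has an increasing pair $G_i \linm G_j$ with $i < j$. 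I will use throughout that a connected component, and more generally a block (a maximal $2$-connected subgraph, or a bridge), of a graph of $\mathcal{G}$ is an induced subgraph of it: the Proposition is applied here only to classes closed under induced minors, hence hereditary, so these pieces again belong to $\mathcal{G}$.

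\emph{From arbitrary to connected.} Each $G_i$ is the disjoint union of its components, and disjoint union is monotone for $\linm$: if some injection sends each component $C$ of $G_i$ to a component $C'$ of $G_j$ with $C \linm C'$, then $G_i \linm G_j$ (take the union of the induced-minor models on disjoint parts of $G_j$). By Higman's lemma it therefore suffices to prove that, for every wqo $(S,\lleq)$, the $(S,\lleq)$-labeled \emph{connected} graphs of $\mathcal{G}$ are wqo by $\linm$; the labelled version is needed because it is exactly what the next step consumes and produces.

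\emph{From connected to $2$-connected.} Take a connected labeled $G \in \mathcal{G}$ together with its block-cut tree — nodes are the blocks and the cut vertices of $G$, a block node adjacent to each cut-vertex node it contains. Root this tree and encode it as a tree whose node labels live in a fixed wqo: a cut-vertex node gets $\lambda_G(v) \in \fpowset(S)$; a block node $B$ gets $B$ regarded as a $2$-connected graph (or $K_2$) of $\mathcal{G}$ whose labeling additionally records, at each vertex, its label in $G$ and whether it is the cut vertex joining $B$ to its parent — marked conveniently, in the sense of the paper's rooted graphs, by a distinguished (pendant) root edge at that vertex. By the hypothesis together with Higman's lemma the set of these node labels is a wqo, so by Kruskal's tree theorem the block-cut trees of the $G_i$'s are wqo under label-respecting homeomorphic embedding. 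It then remains to convert such an embedding of the block-cut tree of $G_i$ into that of $G_j$ into an induced-minor model of $G_i$ inside $G_j$, by placing the rooted labeled induced-minor model of each block of $G_i$ inside the $G_j$-branch corresponding to its image and gluing the branch sets along shared cut vertices as prescribed by the tree embedding. (Equivalently, this whole argument can be organized as a single Nash–Williams minimal-bad-sequence argument on $\mathcal{G}$: split a disconnected member into two smaller nonempty parts and use Higman, and split a connected member with a cut vertex at that vertex and use Higman on the resulting marked pieces, leaving only the $2$-connected members, to which the hypothesis applies directly.)

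The step I expect to be the main obstacle is this last reassembly: one must set up the labels on the block-cut tree — and the precise notion of a rooted/marked induced-minor embedding of a single block — carefully enough that a label-respecting homeomorphic embedding of the trees genuinely yields an induced-minor embedding of the whole graphs. In particular one needs that the branch sets of a shared cut vertex contributed by its several incident blocks glue into one connected branch set, and that the ``induced'' condition is preserved, i.e.\ that no edge of $G_j$ joins the images of two pieces that were non-adjacent in $G_i$; this is where one uses that distinct branches of a block-cut decomposition meet only at cut vertices, and that the tree embedding is homeomorphic rather than merely a minor embedding. A secondary, routine point is the hereditariness of $\mathcal{G}$ ensuring that blocks and components stay inside $\mathcal{G}$, which is automatic for the induced-minor-closed classes to which the Proposition is applied.
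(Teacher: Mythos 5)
The paper never proves \autoref{p:2c-labels}; it imports it from~\cite{fellows2009well}, and the intended mechanism is the one sketched in \autoref{sec:cont}: the labels of the cut vertices of a block are meant to carry the dangling subgraphs themselves, so that label conservation in a model of one labelled block in another tells you where the dangling parts re-attach. Your preliminary reductions are fine: well-foundedness, the component/Higman step, and the caveat that components and blocks of members of $\mathcal{G}$ must again lie in $\mathcal{G}$ (indeed the literal statement fails for arbitrary $\mathcal{G}$: the class $\{W_n + K_1\}_n$, where $W_n$ are Thomas's alternating double wheels, is an infinite antichain with no 2-connected member, so the hypothesis is vacuous).

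The genuine gap is in your main route, at exactly the step you flag but do not resolve. With block nodes labelled by ``the block with its parent cut vertex marked'' and compared by rooted labelled $\linm$, Kruskal's theorem hands you two uncoordinated pieces of data: a homeomorphic embedding $\phi$ of the block-cut trees, and, for each block $B$ of $G_i$ with image $B'=\phi(B)$, \emph{some} induced minor model $\mu_B$ of $B$ in $B'$. Let $c$ be a child cut vertex of $B$. The lca condition only guarantees that the images of the subtrees below distinct children of $B$ hang below $B'$ through distinct cut vertices of $B'$; it does not guarantee that the cut vertex $d$ of $B'$ through which the image of the subtree below $c$ hangs lies in $\mu_B(c)$. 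If $d\notin\mu_B(c)$, the branch set of $c$ must be extended by a path inside $B'$ from $\mu_B(c)$ to $d$, and such a path may meet, or be adjacent to, branch sets $\mu_B(x)$ with $x$ not adjacent to $c$, destroying disjointness or the induced condition; being homeomorphic (rather than a minor embedding) does not prevent this. The standard repair --- and the reason the hypothesis is quantified over \emph{all} wqos --- is the device the paper describes: in a minimal-bad-sequence argument, label each cut vertex of the chosen block by the rooted labelled graph dangling at it (these are smaller, stay in $\mathcal{G}$ by hereditarity, and are wqo by minimality), and apply the hypothesis with \emph{that} wqo as label set; label conservation then forces $\mu_B(c)$ to contain an attachment vertex of a larger dangling graph, which is precisely the coordination your encoding lacks. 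Your parenthetical Nash--Williams variant is much closer to this correct proof (gluing root-preserving models of the pieces at a cut vertex does yield an induced minor model), but as stated it glosses over where the root marks live so that the pieces remain in the poset to which minimality applies --- again resolved through the label set. Two smaller wrinkles: marking the parent cut vertex by a pendant root edge takes the block out of the class of 2-connected graphs of $\mathcal{G}$ (mark via the label poset instead), and blocks isomorphic to $K_2$ or $K_1$ are not 2-connected and need a separate (trivial) treatment.
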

  
\subsection{Sequences, posets and well-quasi-orders}

In this section, we introduce basic definitions and facts related to
the theory of well-quasi-orders. In particular, we recall that
being well-quasi-ordered is preserved by several operations.
 
\paragraph{Sequences.} A \emph{sequence} of elements of a set~$A$ is an ordered countable
collection of elements of~$A.$ Unless otherwise stated, sequences are
finite. The sequence of elements $s_1, \dots, s_k \in A$ in this
order is denoted by $\seqb{s_1, \dots, s_k}.$ 
We~use the notation $A^\star$ for the
class of all finite sequences over $A$ (including the empty
sequence). The length of a finite
sequence $s\in A^\star$ is denoted~by~$|s|$. 

\paragraph{Posets ans wqos.}
A \emph{partially ordered set} (\emph{poset} for short) is a pair $(A,
\lleq)$ where $A$ is a set and $\lleq$ is a binary relation on $A$
which is reflexive, antisymmetric and transitive.
An \emph{antichain} is a sequence of pairwise non-comparable elements.
In a sequence $\seqb{x_i}_{i\in I \subseteq \N}$ of a poset $(A, \lleq),$ a pair
$(x_i,x_j),$ $i,j \in I$ is a \emph{good pair} if $x_i \lleq x_j$
and~$i<j.$ A poset $(A, \lleq)$ is a \emph{well-quasi-order}
(\emph{wqo} for short)\footnote{Usually in literature the term 
	well-quasi-order is defined for more general
	structures than posets, namely {\it quasi-orders}. Those relations 
	are like posets, except they are not required to be antisymmetric.
	This is mere technical detail, as
	every poset is a quasi-order, and from a quasi-order one can make a
	poset by taking a quotient by the equivalence relation 
	$a \lleq b \land b \lleq a$.},
and its elements are said to be \emph{well-quasi-ordered} (\emph{wqo} for short) by $\lleq,$
if every infinite sequence has a good pair, or
equivalently, if $(A, \lleq)$ has neither an infinite decreasing sequence, nor
an infinite antichain. An infinite sequence containing no good pair is
called an \emph{bad sequence}.

\paragraph{Ordering sequences.} For any partial order $(A, \lleq),$ we define
the relation $\lleq^\star$ on $A^\star$ as follows: for every $r
=\seqb{r_1,\dots, r_p}$ and $s = \seqb{s_1,\dots, s_q}$ of
$A^\star,$ we have $r \lleq^\star s$ if there is a increasing
function $\varphi \colon \intv{1}{p} \to \intv{1}{q}$ such that for
every $i \in \intv{1}{p}$ we have~$r_i \lleq s_{\varphi(i)}.$ Observe that
$\subseq$ is then the subsequence relation.
This order relation is extended to the class $\fpowset(A)$ of finite
subsets of $A$ as follows, generalizing the subset relation: for every
$B,C \in \fpowset(A)$, we write $B \lleq^{\powset{}} C$ if there is an
injection $\varphi \colon B \to C$ such that $\forall x \in B,\ x
\lleq \varphi(x).$ Observe that $=^{\powset{}}$ is the subset relation.

\paragraph{Monotonicity.}
In order to stress that the domain $(A, \lleq_A)$ and codomain $(B, \lleq_B)$ of a function $\varphi$ are
posets, we sometimes use the notation $\varphi \colon (A, \lleq_A) \to (B, \lleq_B)$.
A function $\varphi \colon (A, \lleq_A) \to (B, \lleq_B)$ is said to
be \emph{monotone} if it satisfies the following property:
\[
\forall x,y \in A,\ x \lleq_A y \Rightarrow f(x) \lleq_B f(y).
\]

In order to prove that a function is monotone, one can focus on each argument separately, as noted in the following remark.
\begin{remark}\label{r:mono-pair}
  Let $(A, \lleq_A),$ $(B, \lleq_B),$ and $(C, \lleq_C)$ be
  posets and let {$f \colon (A \times B, \lleq_A \times \lleq_B) \to
  (C, \lleq_C)$} be a function. If we have both
  \begin{align}
    \forall a \in A,\ \forall b,b' \in B,\ b \lleq_B b' &\Rightarrow
    f(a,b) \lleq_C f(a,b')\label{eq:mono1}\\
    \text{and}\ \forall a,a' \in A,\ \forall b \in B,\ a \lleq_A a' &\Rightarrow
    f(a,b) \lleq_C f(a',b)\label{eq:mono2}
  \end{align}
  then $f$ is monotone.
  Indeed, let $(a,b), (a',b') \in A \times B$ be such that $(a,b)
  \lleq_A \times \lleq_B (a',b').$ By definition of the relation $\lleq_A
  \times \lleq_B,$ we have both $a \lleq a'$ and $b \lleq b'.$
  From line \itemref{eq:mono1} we get that $f(a,b) \lleq_C f(a,b')$ and from
  line \itemref{eq:mono2} that $f(a,b') \lleq_C f(a',b'),$ hence
  $f(a,b)\lleq_C f(a',b')$ by transitivity of $\lleq_C.$ Thus $f$ is
  monotone. Observe that this remark can be generalized to functions with more than
  two arguments.
\end{remark}

Well-quasi-orders can be constructed from smaller ones by simple operations. The following proposition lists well-known properties of well-quasi-orders. For a reference, the reader can refer to \cite{Higman01011952} and \cite{Kruskal1972297}.
\begin{proposition}\label{wqomagic}
  Let $(A, \lleq_A)$ and $(B, \lleq_B)$ be two wqos, then
\begin{itemize}
\item their union $(A \cup B, \lleq_A \cup \lleq_B)$, which is the poset
  defined as~follows:
  \[
  \forall x,y \in A \cup B,\ x \mathbin{\lleq_A \cup \lleq_B} y\ \text{if}\ (x,y \in A\ \text{and}\ x \lleq_A y)\
  \text{or}\ (x,y \in B\ \text{and}\ x \lleq_B y),
\]
is a wqo;
\item their Cartesian product $(A \times B, \lleq_A \times \lleq_B)$ which is the poset defined~by:
  \[
  \forall (a,b),(a',b') \in A \times B,\  (a,b) \lleq_A \times \lleq_B
  (a',b')\ \text{if}\ a \lleq_A a'\ \text{and}\ b \lleq_B b',
\]
is a wqo;
\item $(A^\star, \lleq^\star)$ is a wqo (Higman's Lemma);
\item $(\fpowset(A), \lleq^{\powset{}})$ is a wqo;
\item if $(C, \lleq_C)$ is a poset included in the image of a monotone function with domain $(A, \lleq_A)$, then $(C, \lleq_C)$ is a wqo.
\end{itemize}
\end{proposition}

\subsection{Graph operations and containment relations}
\label{sec:cont}

Most of the common order relations on graphs, sometimes called
\emph{containment relations}, can be defined in two equivalent ways:
either in terms of \emph{graph operations}, or by using \emph{models}.
Let us look closer at them.

\paragraph{Local operations.} If $\{u,v\} \in E(G),$ the
\emph{edge contraction} of $\{u, v\}$ adds a new vertex $w$ adjacent to the
neighbors of $u$ and $v$ and then deletes $u$ and $v$. In the case
where $G$ is labeled, we set $\lambda_G(w) = \lambda_G(u) \cup
\lambda_G(v)$. On the other hand, an \emph{edge subdivision} of $\{u,v\}$ adds
a new vertex adjacent to $u$ and $v$ and deletes the edge~$\{u,v\}.$
The \emph{identification} of two vertices $u$ and $v$ adds the
edge $\{u,v\}$ if it was not already existing, and contracts it.
If $G$ is $(\Sigma, \lleq)$-labeled (for some poset $(\Sigma,
\lleq)$), a \emph{label contraction} is the operation of relabeling a
vertex $v \in V(G)$ with a label~$l$ such that~$l \lleq^{\powset{}} \lambda_G(v)$. 
The motivation for this definition of label contraction is the following. Most of
the time, labels will be used to encode connected graphs into
2-connected graphs. Given a connected graph which is not 2-connected,
we can pick an arbitrary block (i.e.~a maximal 2-connected
component), delete the rest of the graph and label each vertex $v$ by the
subgraph it was attached to in the original graph if $v$ was a cutvertex, and by
$\emptyset$ otherwise. That way, contracting the label of a vertex $v$ in the labeled
2-connected graph corresponds to reducing (for some containment
relation) the subgraph which was dangling at vertex $v$ in the
original graph (see also \autoref{p:2c-labels}).

\paragraph{Models.}
Let $(\Sigma, \lleq)$ be any poset. A \emph{containment model} of a
$(\Sigma, \lleq)$-labeled graph $H$ in a $(\Sigma, \lleq)$-labeled
graph $G$ (\emph{$H$-model} for short) is a function $\mu \colon V(H) \to
\fpowset(V(G))$ satisfying the following conditions:
\begin{enumerate}[(i)]
\item for every two distinct $u,v \in V(H),$ the sets $\mu(u)$
and $\mu(v)$ are disjoint;
\item for every $u \in V(H),$ the subgraph of $G$ induced by
  $\mu(u)$ is connected; 
\item for every $u \in V(H),\ \lambda_H(u) \lleq^\star
  \bigcup_{v \in \mu(u)}\lambda_G(v)$ (label conservation).
\end{enumerate}

When in addition $\mu$ is such that for every two distinct $u,v \in
V(H),$ the sets $\mu(u)$ and $\mu(v)$ are adjacent in $G$ iff
$\{u,v\} \in E(H)$, then $\mu$ is said to be an \emph{induced
  minor model} of $H$ in $G$.

If $\mu$ is an induced minor model of $H$ in $G$ satisfying the
following condition:
\[
\bigcup_{v\in V(H)}\mu(v) = V(G),
\]
then $\mu$ is a \emph{contraction model} of $H$ in~$G$.

If $\mu$ is an induced minor model of $H$ in $G$ satisfying the
following condition:
\[
\forall v \in V(H),\ |\mu(v)| = 1,
\]
then $\mu$ is an \emph{induced subgraph model} of $H$ in~$G$.

An $H$-model in a graph $G$ witnesses the presence of $H$ as
substructure of $G$ (which can be induced subgraph, induced minor,
contraction, etc.), and the subsets of $V(G)$ given by the
image of the model indicate which subgraphs to keep and to contract in
$G$ in order to obtain~$H$.

When dealing with rooted graphs, the aforementioned models must in
addition preserve the root, that is, if $\{u,v\}$ is the root of $H$
then the root of $G$ must have one endpoint in $\mu(u)$ and the other
in $\mu(v)$.

\paragraph{Containment relations.} Local operations and
models can be used to express that a graph is \emph{contained} in an
other one, for various definitions of ``contained''. %
We say that a graph $H$ is an \emph{induced minor} (resp.\ a
contraction, induced subgraph) of a graph $G$ if there is an induced minor model
(resp. a contraction model, an induced subgraph model) $\mu$ of $H$ in
$G$, what we note $H \linm G$ (resp.\ $H \lctr G$, $H \lisgr G$).

Otherwise, $G$ is said to be \emph{$H$-induced minor-free} (resp.\
\emph{$H$-contraction-free}, \emph{$H$-induced subgraph-free}). The
class of $H$-induced minor-free graphs will be referred to as~$\excl(H).$

\begin{remark}
  In terms of local operation, these containment relations are defined
  as follows for every $H,G$ graphs:
  \begin{itemize}
  \item $H \lisgr G$ iff there is a (possibly empty) sequence
    of vertex deletions and label contractions
    transforming $G$ into~$H$;
  \item $H \linm G$ iff there is a (possibly empty) sequence
    of vertex deletions, edge contractions and label contractions
    transforming $G$ into~$H$;
  \item $H \lctr G$ iff there is a (possibly empty) sequence
    of edge contractions and label contractions transforming $G$
    into~$H.$
  \end{itemize}  
\end{remark}

\paragraph{Subdivisions.} A subdivision of a graph $H$, or
$H$-subdivision, is a graph obtained from $H$ by edge subdivisions. The
vertices added during this process are called \emph{subdivision~vertices}.

\paragraph{Containing $K_4$-subdivisions.}
A graph $G$ contains $K_4$ as an induced minor if and only if $G$ contains $K_4$-subdivision as a subgraph. This equivalence is highly specific to the graph $K_4$ and in general neither implication would be true. We will freely change between those two notions for containing $K_4$, depending on which one is more convenient in the given context.

A graph $G$ will be said to contain a \emph{proper $K_4$ subdivision}, if there is some vertex $v \in V(G)$, such that $G \setminus v$ contains a $K_4$-subdivision.

\section{Antichains for induced minors}
\label{sec:antichains}

An infinite antichain is an obstruction for a quasi-order to be a
wqo.
As we will see in \autoref{sec:dicho}, the study of infinite
antichains can provide helpful information when looking for graphs $H$
such that $(\excl(H), \linm)$ is a wqo. In this section we present enumerate some of the
known infinite antichains for induced minors.

In 1985, Thomas~\cite{Thomas1985240} presented an infinite sequence of
planar graphs (also mentioned later in~\cite{roberston1993graph}),
that is an antichain for induced minors. This shows that induced minors
do not well-quasi-order planar~graphs.
The elements of this antichain, called \emph{alternating double
  wheels}, are constructed from an even cycle by adding two
nonadjacent vertices and connecting one to one color class of the
cycle, and connecting the other vertex to the other color class
(cf.~\autoref{fig:thom} for the three first such graphs). This
infinite antichain shows that $(\excl(K_5), \linm)$ is not a wqo since
no alternating double wheel contains $K_5$ as (induced)~minor. As a
consequence, $(\excl(H), \linm)$ is not a wqo as soon as $H$ contains
$K_5$ as induced minor.

Therefore, in the quest for all graphs $H$ such that $(\excl(H),
\linm)$ is wqo, we can focus the cases where $H$ is $K_5$-induced minor-free.

\begin{figure}[ht]
  \centering
  \begin{tikzpicture}[every node/.style = black node, scale = 0.75]
    \clip (-10.5, 1.75) rectangle (3.5,-1.75);
    \begin{scope}[xshift = -8cm] 
   \draw (0,0) circle (1cm);
   \draw (0,0) node[fill = white] (r) {};
   \draw (180:2) node (s) {};
   \foreach \t in {90,270} {
     \draw (\t - 90:1) node[white node] {} to (r);
     \draw (\t:1) node {} to[in = 90 - \t/2, out = 90 + \t/2,
     looseness = 1.5] (s);
     \draw (1, -1) node[normal] {\Large,};
   }
   \end{scope}
   \begin{scope}
   \draw (0,0) circle (1cm);
   \draw (0,0) node[fill = white] (r) {};
   \draw (180:2) node (s) {};
   \foreach \t in {45,135,...,330} {
     \draw (\t - 45:1) node[white node] {} to (r);
     \draw (\t:1) node {} to[in = 90 - \t/2, out = 90 + \t/2,
     looseness = 1.5] (s);
     \draw (1, -1) node[normal] {\Large,};
   }
   \end{scope}
    \begin{scope}[xshift = -4cm] 
   \draw (0,0) circle (1cm);
   \draw (0,0) node[fill = white] (r) {};
   \draw (180:2) node (s) {};
   \foreach \t in {60,180,300} {
     \draw (\t - 60:1) node[fill = white] {} to (r);
     \draw (\t:1) node {} to[in = 90 - \t/2, out = 90 + \t/2,
     looseness = 1.5] (s);
   }
   \draw (1, -1) node[normal] {\Large,};
   \end{scope}
   \draw (2.5, -1) node[normal] {\Large $\dots$};
  \end{tikzpicture}
  \caption{Thomas' alternating double wheels.}
  \label{fig:thom}
\end{figure}
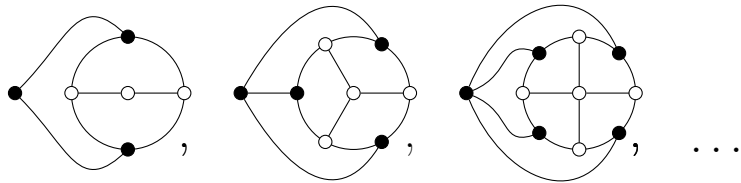

The infinite antichain $\mathcal{A}_M$ depicted in \autoref{fig:matou} was introduced
in~\cite{ matouvsek1988polynomial}, where it is also proved that none
of its members contains $K_5^-$ as induced minor. Similarly as the above
remark, it follows that if $(\excl(H), \linm)$ is a wqo then $K_5^- \not
\linm H$. Notice that graphs in this antichain have bounded
maximum degree.

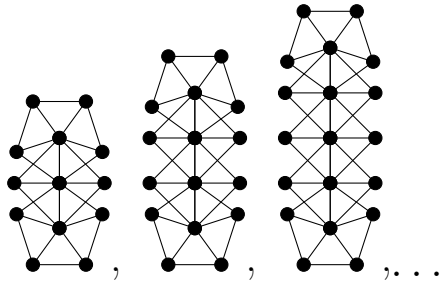
\begin{figure}[h!]
  \centering
  \begin{tikzpicture}[every node/.style = black node, scale = 0.6]
    \clip (1.75, 4) rectangle (11.75,-2.25);
    \foreach \v in {1,...,3} {
      \begin{scope}[xshift = 3*\v cm, yshift = \v cm]
      \begin{scope}
        \draw (-90:1) node {}
        -- (-18:1) node {}
        -- (54:1) node {}
        -- (126:1) node {}
        -- (198:1) node {} 
        -- cycle;
        \draw (0,0) node {}
        edge (-90:1)
        edge (-18:1)
        edge (54:1)
        edge (126:1)
        edge (198:1);
      \end{scope}
      \foreach \i in {1,..., \v}{
        \begin{scope}[yshift = -\i cm]
          \draw (0:1) node {}
          -- (90:1) node {}
          -- (180:1) node {}
          -- (-90:1) node {}
          -- cycle;
          \draw (0,0) node {}
          edge (90:1)
          edge (180:1)
          edge (-90:1)
          edge (0:1);
        \end{scope}
      }
      \pgfmathparse{int(-\v -1)} \let \vp \pgfmathresult
      \begin{scope}[yshift = \vp cm, rotate = 180]
        \draw (-90:1) node {}
        -- (-18:1) node {}
        -- (54:1) node {}
        -- (126:1) node {}
        -- (198:1) node {} 
        -- cycle;
        \draw (0,0) node {}
        edge (-90:1)
        edge (-18:1)
        edge (54:1)
        edge (126:1)
        edge (198:1);
      \end{scope}
      \draw[yshift = -\v cm] (1.25, -2) node[normal] {\Large,};
    \end{scope}
    }
    \draw (11, -2) node[normal] {\Large$\dots$};
  \end{tikzpicture}
  \caption{The infinite antichain $\mathcal{A}_M$ of Matoušek, Nešetřil, and Thomas.}
  \label{fig:matou}
\end{figure}

An \emph{interval graph} is the intersection graph of segments
of~$\mathbb{R}$. A well-known property of interval graphs that we will
use later is that they do not contains $C_4$ as induced minor.
In order to show that interval graphs are not wqo by $\linm$, Ding
introduced in~\cite{JGT:JGT4} an infinite sequence of graphs defined
as follows.
For every $n\in \N$, $n>2$, let $T_n$ be the set of closed intervals
\begin{itemize}
\item $[i,i]$ for $i$ in $\intv{-2n}{-1} \cup \intv{1}{2n}$;
\item $[-2, 2]$, $[-4, 1]$, $[-2n+1, 2n]$, $[-2n+1, 2n-1]$;
\item $[-2i+1, 2i+1]$ for $i$ in $\intv{1}{n-2}$; and
\item $[-2i, 2i - 2]$ for $i$ in $\intv{3}{n}$.
\end{itemize}
\autoref{fig:dingz} depicts the intervals of $T_6$:
the real axis (solid line) is folded up and an interval $[a,b]$ is
represented by a dashed line between $a$ and~$b$.

\begin{figure}[ht]
  \centering
  \begin{tikzpicture}[every node/.style = normal, scale = 0.75]
    \foreach \i in {1,2,...,12}{
      \draw (-\i, 1) node[label = 90:{\small $\i$}] (N-\i) {};
      \path[draw, dashed] (N-\i.center) .. controls +(-1.5, 1.25) and +(1.5, 1.25) .. (N-\i.center);
      \draw (-\i, -1) node[label = -90:{\small $-\i$}] (M-\i) {};
      \path[draw, dashed] (M-\i.center) .. controls +(-1.5, -1.25) and +(1.5, -1.25) .. (M-\i.center);
    }
    \draw (0,0) node[label = 0:0] (N0) {};
    \node[label = 90:$+\infty$] (N-i) at (-13.5, 1) {};
    \node[label = -90:$-\infty$] (M-i) at (-13.5, -1) {};
    %
    \draw (N-i.center) -- (N-1.center);
    \path[draw] (N-1.center) .. controls +(0.5, 0) and +(0, 0.5) .. (N0.center);
    \path[draw] (N0.center) .. controls +(0, -0.5) and +(0.5, 0) .. (M-1.center);
    \draw (M-1.center) -- (M-i.center);
    %
  \begin{scope}[dashed]
    \foreach \i in {-10, -8, -6, -4}{
      \pgfmathparse{int(\i - 2)}\let \j \pgfmathresult
      \draw (N\i.center) -- (M\j.center);
    }
    \foreach \i in {-9, -7, -5, -3}{
      \pgfmathparse{int(\i + 2)}\let \j \pgfmathresult
    \draw (N\i.center) -- (M\j.center);
  }

  \draw (N-11.center) -- (M-11.center);
  \draw (N-2.center) -- (M-2.center);
  \draw (N-12.center) -- (M-9.center);
  \draw (N-1.center) -- (M-4.center);
  \end{scope}
  \end{tikzpicture}
  \caption{An illustration of the intervals in $T_6$.}
  \label{fig:dingz}
\end{figure}
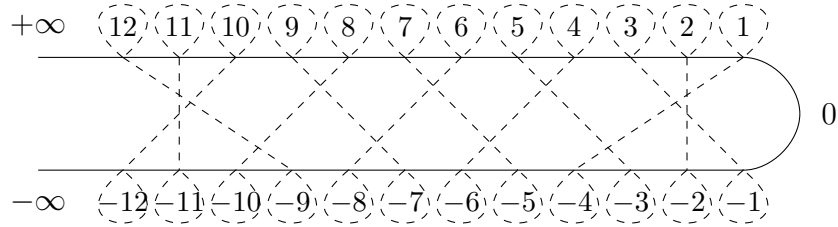

For every $n \in \N$, $n>2$, let $A_n^D$ be the intersection graph of
segments of~$T_n$. Let $\mathcal{A}_D = \seqb{A_n^D}_{n>2}$. Ding
proved in~\cite{JGT:JGT4} that $\mathcal{A}_D$ is an antichain for
$\linm$, thus showing that interval graphs are not wqo by induced~minors.

Let us now present two infinite antichains that were, to our knowledge,
not mentioned elsewhere earlier.
Let $A_{\overline{C}} = \seqb{\overline{C_n}}_{n \geq 3}$ be the
sequence of \emph{antiholes}, whose first elements are
represented in \autoref{fig:antiholes}.

\begin{figure}[ht]
  \centering
  \begin{tikzpicture}[every node/.style = black node, scale = 0.75]
    \foreach \v in {3,...,7}{
      \pgfmathparse{int(3 * (\v - 5))}\let \tmp \pgfmathresult
      \begin{scope}[xshift = \tmp cm]
        \pgfmathparse{360 / \v} \let \angle \pgfmathresult

        \pgfmathparse{\angle - 90}
        \begin{scope}[rotate = \pgfmathresult]
        \foreach \a in {1,...,\v}
        {
          \node[draw] (N\a) at (\a*\angle:1) {};
        }
        {
        \pgfmathparse{\v-2}\let\vmo\pgfmathresult
         \foreach[evaluate={\apo = int(\a+2);}] \a in {1,...,\vmo}{
           \foreach \ap in {\apo,...,\v}{
             \ifthenelse{\NOT\(\a=1 \AND \ap=\v\)}{
             \draw (N\a) -- (N\ap);}{};
           }
         }
         }
        \end{scope}
        \draw (-90:1.75) node[normal] {$\overline{C_\v}$};
        \draw (1,-1) node[normal] {\Large ,};
      \end{scope}
    }
    \draw (8,-1) node[normal] {\Large$\dots$};
  \end{tikzpicture}
  \caption{Antiholes antichain.}
  \label{fig:antiholes}
\end{figure}
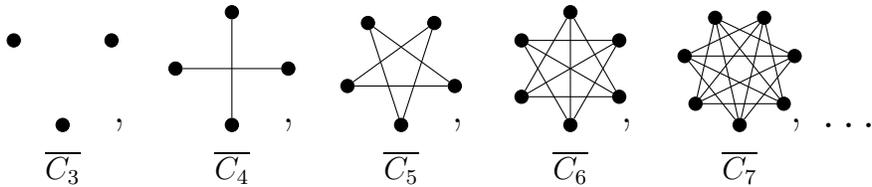

\begin{lemma}\label{l:linfor}
  If $H \linm \overline{C_n}$ and $|V(H)| < n$ for some integer $n \geq 3$, then $\overline{H}$ is a linear
  forest.
\end{lemma}
\begin{proof}
  Towards a contradiction, let us assume that $\overline{H}$ is not a linear forest.

  \smallskip
  \noindent \textit{First case:} $\overline{H}$ has a vertex $v$ of degree at least 3.
  Let $x,y,z$ be three neighbors of $v.$ In the graph
  $\induced{H}{\{v,x,y,z\}},$ the vertex $v$ is adjacent to none of
  $x,y,z.$ In an antihole, every vertex has exactly two non-neighbors,
  so $\induced{H}{\{v,x,y,z\}}$ is not an induced minor of any element of
  $\mathcal{A}_{\overline{C}}.$ In particular, $H \nlinm \overline{C_n}$, a contradiction.

  \smallskip
  \noindent \textit{Second case:} $\overline{H}$ contains an induced cycle as an
  induced subgraph. Then for some integer $n'\geq 3$, we have $\overline{C_{n'}} \linm H \linm \overline{C_n}$.
 Let $\mu$ be an induced minor model of $\overline{C_{n'}}$ in~$\overline{C_{n}}$.
  Let $u,v,w$ be three vertices of $\overline{C_{n'}}$ that appear consecutively and in this order in its complement,~$C_{n'}$. Notice that $v$ is adjacent to none of $u,w$ in $\overline{C_{n'}}$.
  Therefore, there is no edge from $\mu(v)$ to any of $\mu(u)$ and~$\mu(w)$ in $\overline{C_{n}}$. Since a vertex of $\overline{C_{n}}$ has exactly two non-neighbors and distinct vertices have different sets of non-neighbors, we deduce that $\mu(u), \mu(v), \mu(w)$ are singletons and that they contain vertices that are consecutive (in this order) on the cycle of the complement of $\overline{C_{n}}$. Applying this argument for every triple such as $u,v,w$ implies the existence of a cycle of order $n'$ in $C_{n}$, a contradiction.
\end{proof} 

\begin{corollary}
   $\mathcal{A}_{\overline{C}}$ is an antichain.
\end{corollary}

We will meet again the antichain $\mathcal{A}_{\overline{C}}$ in the
proof of \autoref{t:dich}.
Another infinite antichain which shares with $\mathcal{A}_M$ the
properties of planarity and bounded maximum degree is depicted in
\autoref{fig:nestloz}. We will not go more into detail about it
here as this antichain will be of no use in the rest of the paper.

\begin{figure}[ht]
  \centering
  \begin{tikzpicture}[every node/.style = black node, scale = 0.7]
    \begin{scope}[xshift = -2.5cm]
      \draw (-1.75, 0) node {} -- (0,2) node {} -- (1.7, 0) node {} -- (0, -2) node {} -- cycle;
      \draw (0, 2) -- (1, 0) node {};
      \draw (0, -2) -- (-1, 0) node {};
      \draw (-1.75, 0) -- ++(1,2) node {};
      \draw (1.75, 0) -- ++(-1, -2) node {};
      \draw (2, -1) node[normal] {\Large,};
    \end{scope}
    \begin{scope}[xshift = 2cm]
      \draw (-1.75, 0) node {} -- (0,2) node {} -- (1.7, 0) node {} -- (0, -2) node {} -- cycle;
      \draw (-1, 0) node {} -- (0, 2) -- (1, 0) node {} -- (0, -2) -- cycle;
       \draw (-1, 0) -- (0, -1) node {};
      \draw (1, 0) -- (0, 1) node {};
            \draw (-1.75, 0) -- ++(1,2) node {};
      \draw (1.75, 0) -- ++(-1, -2) node {};
      \draw (2, -1) node[normal] {\Large,};
    \end{scope}
    \begin{scope}[xshift = 6.5cm]
      \draw (-1.75, 0) node {} -- (0,2) node {} -- (1.7, 0) node {} -- (0, -2) node {} -- cycle;
      \draw (-1, 0) node {} -- (0, 2) node {} -- (1, 0) node {} -- (0, -2) node (D2) {} -- cycle;
      \draw (-1, 0) -- (0, 1) node {} -- (1,0) -- (0, -1) node {} -- cycle; 
      \draw (0, 1) -- (0.5, 0) node {};
      \draw (0, -1) -- (-0.5, 0) node {};
      \draw (-1.75, 0) -- ++(1,2) node {};
      \draw (1.75, 0) -- ++(-1, -2) node {};
      \draw (2, -1) node[normal] {\Large,};
    \end{scope}
    \begin{scope}[xshift = 11cm]
      \draw (-1.75, 0) node {} -- (0,2) node {} -- (1.7, 0) node {} -- (0, -2) node {} -- cycle;
      \draw (-1, 0) node {} -- (0, 2) node {} -- (1, 0) node {} -- (0, -2) node (D2) {} -- cycle;
      \draw (-1, 0) -- (0, 1) node {} -- (1,0) -- (0, -1) node {} -- cycle; 
      \draw (0, 1) -- (0.5, 0) node (F1) {} -- (0, -1) -- (-0.5, 0) node (F2) {} -- cycle;
      \draw (F1) -- (0, 0.25) node {};
      \draw (F2) -- (0, -0.25) node {};
      \draw (-1.75, 0) -- ++(1,2) node {};
      \draw (1.75, 0) -- ++(-1, -2) node {};
      \draw (2, -1) node[normal] {\Large,};
    \end{scope}
    \draw (15, -1) node[normal] {\Large$\dots$};
  \end{tikzpicture}
  \caption{Nested lozenges.}
  \label{fig:nestloz}
\end{figure}
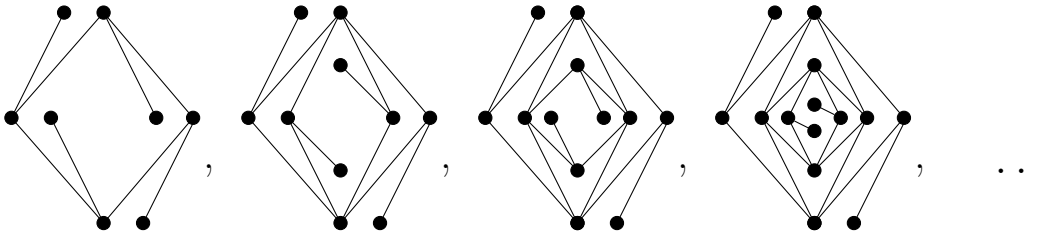

\section{The dichotomy theorem}
\label{sec:dicho}

The purpose of this section is to prove \autoref{t:dich}, that is, to
characterize all graphs $H$ such that $(\excl(H), \linm)$ is a wqo. 
To this end, we will assume \autoref{t:exclh3-wqo} and
\autoref{t:exclgem-wqo}, which we will prove later, in \autoref{sec:wqohtg} and
\autoref{sec:wqogem} respectively.

The main ingredients of the proof are the infinite antichains presented
in \autoref{sec:antichains}, together with \autoref{t:exclh3-wqo} and
\autoref{t:exclgem-wqo}.
Infinite antichains will be used to discard every graph $H$ that is not
induced minor of all but finitely many elements of some infinite antichain. On
the other hand, knowing that $(\excl(H), \linm)$ is a wqo gives that
$(\excl(H'), \linm)$ is a wqo for every $H' \linm H$ in the virtue of the following remark.

\begin{remark}\label{r:exclincl}
  For every $H,H'$ such that $H' \linm H$, we have $\excl(H') \subseteq \excl(H)$.
\end{remark}

As a consequence of \autoref{l:linfor}, if $(\excl(H), \linm)$ is a wqo then $\overline{H}$ is a linear forest.
Because this statement concerns the complement of $H$, we will be led below to work with this
graph rather than with~$H$. The following lemma presents step by step
the properties that we can deduce on $\overline{H}$ by assuming that
$\excl(H)$ is wqo by~$\linm$.

\begin{lemma}\label{l:rkz}
If $(\excl(H), \linm)$ is a wqo, then we have
\begin{enumerate}[(R1)]
\item\label{e:r1} $\overline{H}$ has at most 4 connected components;
\item\label{e:r2} at most one connected component of $\overline{H}$ is not a
  single~vertex;
\item\label{e:r3} the largest connected component of $\overline{H}$ has at most 4 vertices;
\item\label{e:r4} if $n = \card{V(H)}$ and $c =
  \cc(\overline{H})$ then $n \leq 7$ and $\overline{H} = (c - 1) \cdot K_1 +
  P_{n-c+1}$;
\item\label{e:r5} if $\cc(\overline{H}) = 3$ then $\card{V(H)} \leq 5$.
\item\label{e:r6} if $\cc(\overline{H}) = 4$ then $\card{V(H)} \leq 4.$ 
\end{enumerate}
\end{lemma}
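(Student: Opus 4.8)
The argument combines \autoref{l:linfor} with the three infinite antichains recalled in \autoref{sec:antichains}. Assume $(\excl(H),\linm)$ is a wqo. By \autoref{l:linfor} the graph $\overline{H}$ is a linear forest, so I write $\overline{H}=P_{a_1}+\dots+P_{a_c}$ with $a_1\ge\dots\ge a_c\ge 1$, where $c=\cc(\overline{H})$ and $n=\card{\vertices(H)}=\sum_{i}a_i$. Three consequences of the wqo assumption will be used repeatedly: $K_5\not\linm H$ (otherwise Thomas' alternating double wheels, an antichain avoiding $K_5$ as induced minor, all lie in $\excl(K_5)\subseteq\excl(H)$ by \autoref{r:exclincl}); $K_5^-\not\linm H$ (likewise from the antichain $\mathcal{A}_M$, as already noted in \autoref{sec:antichains}); and $C_4\not\linm H$ (likewise from Ding's antichain $\mathcal{A}_D$ of interval graphs, none of which contains $C_4$ as an induced minor). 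I also use freely that $F\lisgr\overline{H}$ iff $\overline{F}\lisgr H$, that $F\lisgr G$ implies $F\linm G$, and the identities $\overline{5\cdot K_1}=K_5$, $\overline{2\cdot K_2}=C_4$ and $\overline{K_2+3\cdot K_1}=K_5^-$.

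Properties \remref{e:r1}, \remref{e:r2}, \remref{e:r3} each follow by locating a small induced subgraph of $\overline{H}$ whose complement is $K_5$ or $C_4$. If $c\ge 5$, one vertex from each of five distinct components gives $5\cdot K_1\lisgr\overline{H}$, hence $K_5\linm H$ --- contradiction; so $c\le 4$, which is \remref{e:r1}. If two components of $\overline{H}$ each contain an edge, picking one such edge in each gives $2\cdot K_2\lisgr\overline{H}$, hence $C_4\linm H$ --- contradiction; so at most one component of $\overline{H}$ is not a single vertex, which is \remref{e:r2}. If $a_1\ge 5$, then inside $P_{a_1}=v_1v_2\cdots v_{a_1}$ the vertices $\{v_1,v_2,v_4,v_5\}$ induce $2\cdot K_2$, hence $C_4\linm H$ --- contradiction; so $a_1\le 4$, which is \remref{e:r3}. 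Property \remref{e:r4} is then a synthesis: by \remref{e:r2} at most one component of $\overline{H}$ is non-trivial, so $\overline{H}=(c-1)\cdot K_1+P_{n-c+1}$ (reading $P_1=K_1$ when $n=c$), and $n=(c-1)+(n-c+1)\le 3+4=7$ by \remref{e:r1} and \remref{e:r3}.

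It remains to sharpen the bound on $n$ when $c\in\{3,4\}$, and this is where $K_5^-$ enters. For \remref{e:r5}, suppose $c=3$; by \remref{e:r4}, $\overline{H}=2\cdot K_1+P_{n-2}$ with $n-2\le 4$, so it suffices to exclude $n=6$. If $n=6$ then $\overline{H}=2\cdot K_1+P_4$, and the end-edge $p_1p_2$ of the path $p_1p_2p_3p_4$ together with $p_4$ and the two isolated vertices exhibits $K_2+3\cdot K_1$ as an induced subgraph of $\overline{H}$, whence $K_5^-\linm H$ --- contradiction; so $n\le 5$. For \remref{e:r6}, suppose $c=4$; by \remref{e:r4}, $\overline{H}=3\cdot K_1+P_{n-3}$, and if $n\ge 5$ then $P_{n-3}$ has an edge which, with the three isolated vertices, induces $K_2+3\cdot K_1$ in $\overline{H}$, whence $K_5^-\linm H$ --- contradiction; so $n\le 4$. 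I expect no real obstacle: the proof is a systematic case analysis on $(c,n)$ powered by the three antichains. The only point needing a moment's care is \remref{e:r5} at $n=6$, where the chosen edge of $P_4$ must be an \emph{end}-edge, so that a fourth vertex non-adjacent to both its endpoints remains to complete the induced $K_2+3\cdot K_1$; a middle edge would fail. One must also keep the complements straight and remember that it is \autoref{l:linfor} that licenses treating the components of $\overline{H}$ as paths.
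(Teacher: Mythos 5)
Your proof is correct and follows essentially the same route as the paper: \autoref{l:linfor} to make $\overline{H}$ a linear forest, then the antichains to forbid $5\cdot K_1$, $2\cdot K_2$ (i.e.\ $C_4$ in $H$) and $K_2+3\cdot K_1$ (i.e.\ $K_5^-$) in $\overline{H}$, followed by the same case analysis. The only cosmetic difference is that you invoke Thomas' alternating double wheels for $K_5\not\linm H$ where the paper cites $\mathcal{A}_M$, and you phrase the exclusions as induced subgraphs of $\overline{H}$ rather than induced minors, which changes nothing of substance.
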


\begin{proof}
  \textit{Proof of item~\remref{e:r1}.} The infinite antichain $\mathcal{A}_M$ does not
  contain $K_5$ as (induced) minor, hence $K_5 \not \linm H$ and so
  $\overline{H}$ does not contain $5 \cdot K_1$ as induced
  minor. Therefore it has at  most 4 connected components.

  \smallskip
  \noindent\textit{Proof of items~\remref{e:r2} and~\remref{e:r3}.} The infinite antichain
  $\mathcal{A}_D$ does not contain $C_4$ as induced minor (as it is an
  interval graph), hence neither does $H$. Therefore $\overline{H}$ does not
  contain $2 \cdot P_2$ as induced minor. This implies that $\overline{H}$ does
  not contain $P_5$ as induced minor and that given two connected
  components of $\overline{H}$ at least one must be of order one.
  As connected components of $H$ are paths (by \autoref{l:linfor}),
  the largest connected component of $H$ has order at most~4.

  Item~\remref{e:r4} follows from the above proofs and from
  the fact that $\overline{H}$ is a linear forest.

    \smallskip
  \noindent\textit{Proof of item~\remref{e:r5}.} Similarly as in the proof
  of item~\remref{e:r1}, $\mathcal{A}_M$ does not contain $K_5^-$ as
  induced minor so $\overline{K_5^-} = K_2 + 3 \cdot K_1$ is not an
  induced minor of~$\overline{H}$. If we assume that $\cc(\overline{H}) = 3$
  and $\card{V(H)} \geq 6$ vertices, the largest component of
  $\overline{H}$ is a path on (a least) 4 vertices, so it contains $K_1
  + K_2$ as induced subgraph. Together with the two other (single
  vertex) components, this gives an $K_2 + 3 \cdot K_1$ induced minor,
  a contradiction.

  \smallskip
  \noindent\textit{Proof of item~\remref{e:r6}.} Let us assume that $\cc(\overline{H}) = 4$.
  If the largest connected component has more than one vertex, then
  $\overline{H}$ contains $K_2 + 3 \cdot K_1$ induced minor, which is
  not possible (as in the proof of item~\remref{e:r5}). Therefore
  $\overline{H} = 4 \cdot K_1$ and so $\card{V(\overline{H})}
  = 4$.
\end{proof}

We are now able to describe more precisely graphs $H$ for which
$(\excl(H), \linm)$ could be a~wqo. Let $K_3^+$ be the complement of $P_3 +
K_1$ and let $K_4^-$ be the complement of $K_2 + 2\cdot K_1$, which is
also the graph obtained from $K_4$ by deleting an edge (sometimes
referred as \emph{diamond graph}).

\begin{lemma}\label{l:prethe}
  If $(\excl(H), \linm)$ is a wqo, then $H \linm \htg$ or $H \linm \gem$.
\end{lemma}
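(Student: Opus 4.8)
The strategy is to combine the restrictions on $\overline H$ accumulated in \autoref{l:linfor} and \autoref{l:rkz} with a short case analysis on the number of connected components of $\overline H$, and in each case exhibit an explicit induced-minor model of $H$ in $\htg$ or in $\gem$. By \autoref{l:linfor}, $\overline H$ is a linear forest, and by \remref{e:r4}, if $c=\cc(\overline H)$ and $n=\card{\vertices(H)}$ then $n\leq 7$ and $\overline H=(c-1)\cdot K_1+P_{n-c+1}$; moreover \remref{e:r1} gives $c\leq 4$. So there are only finitely many candidate graphs $H$, one for each pair $(c,n)$ with $1\leq c\leq 4$, and we just need to locate each of them below $\htg$ or $\gem$.

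First I would treat the ``large component'' cases $c\le 2$. When $c=1$, $\overline H=P_n$ with $n\le 7$, so $H=\overline{P_n}$; I would check directly that $\overline{P_n}\linm\gem$ for $n\le 5$ (indeed $\overline{P_4}=\gem$... more precisely the $\gem$ is $\overline{P_4}$ up to the dominating-vertex description — I would verify which antihole-type graph $\overline{P_5}$ is and confirm it is an induced minor of the $\gem$), and for $n\in\{6,7\}$ note that $\overline{P_n}$ contains $\overline{C_6}$ as an induced subgraph, contradicting $\excl(H)\ni\mathcal A_{\overline C}$ being wqo via \autoref{l:linfor}'s second case — so in fact $n\le 5$ here, handled by inspection against the $\gem$. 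When $c=2$, by \remref{e:r4} $\overline H=K_1+P_{n-1}$ with $n\le 7$; again a component $P_{n-1}$ with $n-1\ge 5$ would force $P_5\linm\overline H$, contradicting \remref{e:r3}, so $n\le 5$, i.e. $\overline H\in\{K_1+P_2,\,K_1+P_3,\,K_1+P_4\}$, giving $H\in\{P_3+K_1\text{-complement},\,K_4^-,\,\ldots\}$ — I would check each such small graph is an induced minor of $\htg$ (this is where $\htg$, the graph $K_4$ plus a degree-two vertex, is needed: e.g. $K_4^-$ and $\overline{P_3+K_1}=K_3^+$ both sit inside $\htg$ easily).

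Next the small-component cases. If $c=3$, \remref{e:r5} gives $n\le 5$, so $\overline H\in\{3\cdot K_1,\,K_1+K_1+P_2\} = \{3\cdot K_1,\,2\cdot K_1+P_2\}$, hence $H\in\{K_3,\,K_4^-\}$ (the complement of $2\cdot K_1+P_2$ being $K_4$ minus an edge), and both are visibly induced minors of $\htg$ (which contains $K_4$, hence $K_3$ and $K_4^-$). If $c=4$, \remref{e:r6} forces $\overline H=4\cdot K_1$, so $H=K_4\linm\htg$. Collecting the cases, every graph $H$ with $(\excl(H),\linm)$ a wqo satisfies $H\linm\htg$ or $H\linm\gem$, which is the claim.

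The only mildly delicate points are (i) correctly identifying the handful of complements of small linear forests — $\overline{P_5}$, $\overline{K_1+P_4}$, $\overline{2\cdot K_1+P_2}$, $K_3^+$, $K_4^-$ — and (ii) exhibiting explicit induced-minor models of each inside $\htg$ or the $\gem$; both are finite checks best done by drawing the two six-vertex graphs of \autoref{fig:h123} and deleting/contracting. I expect the main (entirely routine) obstacle to be bookkeeping: making sure the upper bounds from \remref{e:r3}, \remref{e:r5}, \remref{e:r6} are invoked to prune $n$ far enough that only graphs actually below $\htg$ or the $\gem$ survive, and then not overlooking any of the three-or-fewer-component configurations. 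No new idea beyond \autoref{l:rkz} is required.
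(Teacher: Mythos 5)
Your overall strategy coincides with the paper's: use \autoref{l:linfor} and \autoref{l:rkz} to cut the candidates for $\overline H$ down to a finite list indexed by $\cc(\overline H)\leq 4$ and $\card{\vertices(H)}\leq 7$, then place each corresponding $H$ below $\htg$ or the $\gem$ (the paper does exactly this with two tables and four containment chains). But your execution of the single-component case contains steps that fail. First, $\overline{P_4}\neq\gem$: $P_4$ is self-complementary, and the $\gem$ is $\overline{P_4+K_1}$, a five-vertex graph. More seriously, $\overline{P_5}$ is the house ($C_5$ plus one chord), and it is \emph{not} an induced minor of the $\gem$, nor of $\htg$: all three graphs have five vertices and are pairwise non-isomorphic, while vertex deletions and edge contractions strictly decrease the number of vertices, so no proper induced minor has the same order. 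Hence the ``confirmation'' you propose for $\cc(\overline H)=1$, $n=5$ cannot succeed; that case must instead be \emph{excluded} using the hypothesis, and the tool is precisely \remref{e:r3} (the largest component of $\overline H$ has at most four vertices), which you invoke for $\cc(\overline H)=2$ but conspicuously not here. The same remark kills $n=6,7$; your alternative pruning --- that $\overline{P_6}$ and $\overline{P_7}$ contain $\overline{C_6}$ as an induced subgraph --- is false, since an induced subgraph of $\overline{P_n}$ is the complement of an induced subgraph of a path, hence of a linear forest, and $C_6$ is not a linear forest.

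There are also bookkeeping slips in the other columns. For $\cc(\overline H)=2$ your complement computations are garbled ($K_4^-$ is the complement of $K_2+2\cdot K_1$ and belongs to the three-component column), and the order-$5$ candidate $K_1+P_4$ has complement the $\gem$ itself, which is \emph{not} an induced minor of $\htg$; so your stated plan to check every candidate of this column against $\htg$ fails there (of course $\gem\linm\gem$ handles it trivially, but you never say so). For $\cc(\overline H)=3$ you omit the order-$5$ candidate $P_3+2\cdot K_1$, whose complement is $\htg$ itself. These omissions are mathematically harmless, but together with the $c=1$ case they show the finite verification was not actually carried through, and at one point ($H=\overline{P_5}$) a false containment is asserted exactly where the hypothesis has to be used. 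Once \remref{e:r3} is applied uniformly (forcing $\card{\vertices(H)}\leq 4$ when $\cc(\overline H)=1$) and the candidate lists are corrected, your argument becomes the paper's proof.
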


\begin{proof}
  Using the information on $\overline{H}$ given by \autoref{l:rkz},
  we can build a table of possible graphs $\overline{H}$ depending on
  $\cc(\overline{H})$ and
  $\card{V(\overline{H})}$. \autoref{t:hbar} is such a
  table: each column corresponds for a number of connected components
  (between one and four according to item~\remref{e:r1}) and each line
  corresponds to an order (at most seven, by item~\remref{e:r4}). A grey
  cell means either that there is no such graph (for instance a graph
  with one vertex and two connected components), or that for all
  graphs $\overline{H}$ matching the number of connected components
  and the order associated with this cell, the poset $(\excl(H),
  \linm)$ is not a wqo.
\begin{table}[ht]
  \centering
  \begin{tabular}{|c|c|c|c|c|}
    \hline
    \phantom{$2^{2^{2^2}}$}$\card{V(H)}\backslash \cc(\overline{H})$& 1 & 2 & 3 & 4\\
    \hline
    1 & $K_1$ & \cellcolor[gray]{0.75}& \cellcolor[gray]{0.75} &\cellcolor[gray]{0.75}\\
    \hline
    2 & $K_2$ & $2\cdot K_1$& \cellcolor[gray]{0.75}&\cellcolor[gray]{0.75}\\
    \hline
    3 & $P_3$ & $K_2 + K_1 $& $3 \cdot K_1$& \cellcolor[gray]{0.75}\\
    \hline
    4 & $P_4$ & $P_3 +  K_1$& $K_2 + 2 \cdot K_1$& $4 \cdot K_1$\\
    \hline
    5 & \cellcolor[gray]{0.75} \remref{e:r3}& $P_4 +  K_1$& $P_3 + 2
    \cdot K_1$& \cellcolor[gray]{0.75} \remref{e:r6}\\
    \hline
    6 & \cellcolor[gray]{0.75} \remref{e:r3}& \cellcolor[gray]{0.75}
    \remref{e:r3}& \cellcolor[gray]{0.75}
    \remref{e:r5}&\cellcolor[gray]{0.75} \remref{e:r6}\\
    \hline
    7 & \cellcolor[gray]{0.75} \remref{e:r3}& \cellcolor[gray]{0.75}
    \remref{e:r3}& \cellcolor[gray]{0.75}
    \remref{e:r5}&\cellcolor[gray]{0.75} \remref{e:r6}\\
    \hline
  \end{tabular}
  \caption{If $(\excl(H), \linm)$ is a wqo, then $\overline{H}$
    belongs to this table.}
  \label{t:hbar}
\end{table}

From \autoref{t:hbar} we can easily deduce \autoref{t:h} of
corresponding graphs $H$.
\begin{table}[ht]
  \centering
  \begin{tabular}{|c|c|c|c|c|}
    \hline
    \phantom{$2^{2^{2^2}}$}$\card{V(H)}\backslash \cc(\overline{H})$& 1 & 2 & 3 & 4\\
    \hline
    1 & $K_1$ & \cellcolor[gray]{0.75}&\cellcolor[gray]{0.75} &\cellcolor[gray]{0.75}\\
    \hline
    2 & $2 \cdot K_1$ & $K_2$& \cellcolor[gray]{0.75}&\cellcolor[gray]{0.75}\\
    \hline
    3 & $K_2 + K_1$ & $P_3$& $K_3$& \cellcolor[gray]{0.75}\\
    \hline
    4 & $P_4$ & $K_3^+$& $K_4^-$& $K_4$\\
    \hline
    5 & \cellcolor[gray]{0.75} \remref{e:r3}& $\mathrm{Gem}$& $\htg$& \cellcolor[gray]{0.75} \remref{e:r6}\\
    \hline
    6 & \cellcolor[gray]{0.75} \remref{e:r3}& \cellcolor[gray]{0.75}
    \remref{e:r3}& \cellcolor[gray]{0.75}
    \remref{e:r5}&\cellcolor[gray]{0.75} \remref{e:r6}\\
    \hline
    7 & \cellcolor[gray]{0.75} \remref{e:r3}& \cellcolor[gray]{0.75}
    \remref{e:r3}& \cellcolor[gray]{0.75}
    \remref{e:r5}&\cellcolor[gray]{0.75} \remref{e:r6}\\
    \hline
  \end{tabular}
  \caption{If $(\excl(H), \linm)$ is a wqo, then $H$
    belongs to this table.}
  \label{t:h}
\end{table}

Observe that we have
\begin{itemize}
\item $K_1 \linm 2 \cdot K_1 \linm K_2 + K_1 \linm P_4 \linm \gem$;
\item $K_2 \linm P_3 \linm K_3^+ \linm \gem$;
\item $K_3 \linm K_4^- \linm \htg$; and
\item $K_4 \linm \htg$.
\end{itemize}
This concludes the proof.

\end{proof}

We are now ready to give the proof of \autoref{t:dich}.

\begin{proof}[Proof of \autoref{t:dich}]
  If $H \not \linm \gem$ and $H \not \linm \htg$, then by
  \autoref{l:prethe} $(\excl(H), \linm)$ is not a wqo.
  On the other hand, by \autoref{t:exclh3-wqo} and
  \autoref{t:exclgem-wqo} we know that both 
  $\excl(\htg)$ and $\excl(\gem)$ are wqo by~$\linm$.
  Consequently, by \autoref{r:exclincl}, $(\excl(H), \linm)$ is wqo as soon as
  $H\linm \gem$ or $H \linm \htg$.
\end{proof}

\section{Graphs not
  containing~\texorpdfstring{$\htg$}{K4\^{}}}
\label{sec:wqohtg}

The main goal of this section is to provide a proof to
\autoref{t:exclh3-wqo}. To this purpose, we first prove in
\autoref{sec:dh3} that graphs of $\excl(\htg)$ admit a simple
structural decomposition (\autoref{t:dec-h3}). This structure is then used in
\autoref{sec:wqoh3} to show that graphs of $\excl(\htg)$ are
well-quasi-ordered by the relation~$\linm$.

\subsection{A decomposition theorem for \texorpdfstring{$\excl(\htg)$}{Excl(K4')}}
\label{sec:dh3}

This section is devoted to the proof of \autoref{t:dec-h3}. This
theorem states that every graph in the class $\excl(\htg)$, either
does not contain $K_4$ as induced minor, or is a subdivision of
some small graph, or can be partitioned into ``simple'' parts.

The proof is split into several parts. Recall that a \emph{proper $K_4$-subdivision} in a graph $G$ is a $K_4$-subdivision that does not uses all vertices.
First we show (\autoref{s:find}) that if $G$ is neither a subdivision of a small graph, nor a wheel, nor a $K_4$-subdivision free graph, then $G$ has a proper $K_4$-subdivision~$S$. We then deduce properties about vertices of $G$ that do not belong to $S$ (\autoref{sec:neighseub}). In \autoref{sec:smallk4sub}, we handle the case where $S$ is in fact a $K_4$-subgraph and show that in the other cases, we can focus on the case where $S$ is a 3-wheel. The last part of the proof is addressed in \autoref{sec:largesub}.

\subsubsection{Finding a proper \texorpdfstring{$K_4$}{K4}-subdivision}
\label{s:find}
  
In this section we show that, unless $G$ has a simple structure, it contains a proper $K_4$-subdivision.
The proof of the following easy lemma is left to the reader.
\begin{lemma}\label{l:k33prism}
  If $G$ can be obtained by adding an edge between two vertices of a
  $K_{3,3}$-subdivision (resp.\ a prism-subdivision), then $G$ has a proper $K_4$-subdivision. 
\end{lemma}

\begin{lemma} \label{l:notproperk4}
  If graph $G$ contains a $K_4$-subdivision, then one of the following holds:
  \begin{enumerate}[(i)]
  \item $G$ has a proper $K_4$-subdivision, or
  \item \label{npk:w} $G$ is a wheel, or
  \item \label{npk:s} $G$ is a subdivision of $K_4$, $K_{3,3}$, or the prism.
  \end{enumerate}

\end{lemma}
\begin{proof}
	Looking for a contradiction, let $G$ be a counterexample with
        the minimum number of vertices and, subject to that, the
        minimum number of edges. Let $S$ be a $K_4$-subdivision in $G$.
        As $G$ has no proper $K_4$-subdivision, $S$ is a spanning subgraph. Besides, $G$ is not a $K_4$-subdivision so there is an edge $e \in E(G) \setminus E(S)$.
	Notice that since the minimum degree of $K_4$ is 3,
        contracting an edge incident with a vertex of degree 2 in $G$
        would yield a smaller counterexample. Therefore $G$ has minimum
        degree at least~3.
        Let $G' = G \setminus \{e\}$. This graph clearly contains~$S$. By minimality of $G$, the graph $G'$ is either a wheel, or a
        subdivision of a graph among $K_4$, $K_{3,3}$, and the prism. Observe that $G'$ cannot have a proper $K_4$-subdivision because it would also be a proper $K_4$-subdivision in~$G$.

        \smallskip
        \noindent\textit{First case:} $G'$ is a wheel.
        Let $C$ be the cycle of the wheel and let $r$ be its
        center. In $G$ the edge $e$ does not have $r$ as an endpoint,
	because
        otherwise $G$ would also be a wheel. Therefore $e$ is
        incident with two vertices of $C$. Let $P$ and $P'$ be the two
        subpaths of $C$ whose endpoints are the endpoints of~$e$.
        Observe that none of $P$ and $P'$ contains more than two
        neighbors of $r$.  
        Indeed, if, say, $P$ contained at least
        three neighbors of $r$, then the subgraph of $G$ induced by
        the vertices of $P$, $e$, an $r$ would contain a $K_4$-subdivision,
        hence contradicting the fact that $G$ has no proper
        $K_4$-subdivision.

        Therefore $G$ is the cycle $C$ with exactly one chord, $e$,
        and the vertex $r$ which has at most 4 neighbors on
        $C$. Because $G$ has minimum degree at least 3, it has at most
        7 vertices.
        We can easily check that if $r$ has three neighbors on $C$, then either one of $P$ and $P'$ contain exactly one of them, in which case $G$ is a
        subdivision of the prism, or both contain two of them (one being contained in both $P$ and $P'$) and $G$ is a wheel (with a center which is the neighbor of $C$ lying on both $P$ and $P'$).
        If $r$ has four neighbors on $C$, the interior of $P$ and $P'$
        must each contain two of them according to the above
        remarks. The deletion of any neighbor of $r$ in this graph
        yields a $K_4$-subdivision of non-subdivision vertices $r$
        and the remaining neighbors.
        Observe that both cases contradict the assumptions made on~$G$.

        \smallskip
        \noindent\textit{Second case:} $G'$ is a subdivision of $K_4$,
          or $K_{3,3}$, or the prism. In the two latter cases the result follows by \autoref{l:k33prism}.
        We therefore assume that $G'$ is a subdivision of $K_4$.
        A \emph{branch} of $S$ is a maximal path, the internal vertices of
        which have degree two (in the subgraph $S$). Notice that every branch of $S$ is
        chordless in, otherwise one could shortcut it and thus find a
        proper $K_4$-subdivision in $G$.
        In the case where the endpoints of $e$ belong to the interior of two
        different branches, then it is easy to see that $G$ is a
        prism-subdivision if these branches share a vertex and a
        subdivision of $K_{3,3}$ otherwise. Let $\{x,y,z,t\}$ be the
        non-subdivision vertices of the $K_4$-subdivision. We denote
        by $B_{s,t}$ (for $s,t \in \{x,y,z,t\}$) the branch ending at
        vertices $s$ and $t$. Finally, let us assume that
        the one endpoint of $e$ is a non-subdivision vertex, say $x$, and the other one, that we
        call $u$, is a subdivision vertex of a branch, say $B_{y,z}$.
        If $X$ is the set of interior vertices of one of $B_{x,y}$,
        $B_{x,z}$, or $B_{x,t}$, then the
        graph $G\setminus X$ has a $K_4$ subdivision of non-subdivision
        vertices~$x,u,z,t$, $x,y,y,t$ or $x,y,u,z$ respectively. In
        this case $G$ has a proper $K_4$-subdivision. If none of $B_{x,y}$,
        $B_{x,z}$, and $B_{x,t}$ has internal vertices, then $G$ is a
        wheel of center~$x$.

In all the possible cases we reached the contradiction we were looking for. This concludes the proof.
\end{proof}

\subsubsection{On the neighbors of proper \texorpdfstring{$K_4$}{K4}-subdivisions}
\label{sec:neighseub}

The outcomes \itemref{npk:w} and \itemref{npk:s} of \autoref{l:notproperk4} match possible outcomes of \autoref{t:dec-h3}, as noted in its proof in the end of \autoref{sec:largesub}, therefore we now focus on the case where $G$ has a proper $K_4$-subdivision. The following lemma describes a structure that forces $\htg$-induced minors and will be used to deduce properties of $\htg$-induced minor-free graphs.

\begin{lemma}\label{l:h3struct}
  If $G$ contains as induced minor any graph $H$ consisting of:
  \begin{itemize}
  \item a $K_4$-subdivision $S$;
  \item an extra vertex $x$ linked by exactly two paths $L_1$ and
    $L_2$ to two distinct vertices $s_1,s_2 \in V(S)$, where
    the only common vertex of $L_1$ and $L_2$ is~$x$;
	\label{e:twop}
\item and possibly extra edges between the vertices of $S$, or
  between $L_1$ and $L_2$, or between the interior of the paths and~$S$,
  \end{itemize}
  then $\htg \linm G.$
\end{lemma}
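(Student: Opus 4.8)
The plan is to show directly that any graph $H$ of the described shape contains $\htg$ as an induced minor, by exhibiting an explicit induced minor model. Recall that $\htg$ is $K_4$ with a vertex of degree~$2$ attached; call $v_1,v_3,v_4$ the three vertices of the $K_4$ that will keep all their adjacencies, $v_2$ the fourth vertex of the $K_4$, and $v_5$ the degree-$2$ vertex adjacent to (say) $v_3$ and $v_4$. So I need, inside $H$, five pairwise-disjoint connected vertex sets $B_1,\dots,B_5$ whose adjacency pattern is exactly that of $\htg$: all of $B_1,B_2,B_3,B_4$ pairwise adjacent, $B_5$ adjacent to $B_3$ and $B_4$ only. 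Note that extra edges in $G$ are harmless as long as we can carve out an \emph{induced} model; the cleanest way is to first pass to the ``minimal'' subgraph guaranteed by the hypothesis (the union of $S$, the two paths $L_1,L_2$, and nothing else) and produce the induced minor model there, since any additional edges of $G$ between these pieces only ever need to be absorbed or avoided — I will address that in the last step.

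The core construction works in the $K_4$-subdivision $S$. Write the four branch vertices of $S$ as $a,b,c,d$, with the six subdivided edges being internally disjoint paths $P_{ab},P_{ac},P_{ad},P_{bc},P_{bd},P_{cd}$. The two vertices $s_1,s_2\in\vertices(S)$ to which $x$ is joined by $L_1,L_2$ either are branch vertices or lie on subdivided edges; in all cases there is a path $Q$ in $S$ between $s_1$ and $s_2$ that is internally disjoint from at least one branch vertex, or alternatively $s_1,s_2$ together with $L_1\cup L_2$ form a cycle $Z$ through $x$. The idea is to contract $L_1\cup L_2$ (together with the piece of $S$ between $s_1$ and $s_2$ on one side) down to a single ``bubble'' that will serve as a bag attached to exactly two of the branch-bags, and to contract each of the six subdivided edges into its incident branch vertices so that $a,b,c,d$ become a clean $K_4$. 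Concretely: pick a branch vertex, say $a$, that is \emph{not} on the $s_1$–$s_2$ segment we are using; let $B_1=\{a\}$ after contracting $P_{ab},P_{ac},P_{ad}$ into it (so $B_1$ is still connected and adjacent to the bags of $b,c,d$); let $B_2,B_3,B_4$ be $b,c,d$ after pulling the remaining subdivided-edge interiors into them, routed so that $B_2,B_3,B_4$ are pairwise adjacent and each adjacent to $B_1$; and let $B_5$ be the bag obtained from $x\cup L_1\cup L_2$ together with the arc of $S$ joining $s_1,s_2$ on the side that touches exactly two of $\{b,c,d\}$ and avoids $a$. A short case analysis on where $s_1,s_2$ sit (both branch vertices; one branch vertex one subdivision vertex; both subdivision vertices, same edge or different edges) shows one can always choose the branch vertex $a$ and the routing so that $B_5$ ends up adjacent to exactly two of $B_2,B_3,B_4$ and to nothing else — this gives precisely $\htg$.

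I expect the main obstacle to be exactly this case analysis: making sure that for \emph{every} placement of $s_1,s_2$ there is a choice of the ``free'' branch vertex and an assignment of the subdivided-edge interiors so that the attachment of the $L_1\cup L_2$ bag sees exactly two branch-bags, never three and never one. The worst cases are when $s_1$ and $s_2$ lie on two edges sharing a branch vertex, or when one of them is a branch vertex — then the natural bubble risks becoming adjacent to three branch-bags, and one must instead route part of that shared edge's interior \emph{into} the bubble to reduce its boundary. Once the induced minor model is built in the minimal subgraph, handling the ``possibly extra edges'' of $G$ is routine: edges inside $S$, between $L_1$ and $L_2$, or between path interiors and $S$ either fall between two bags that are already declared adjacent in $\htg$ (harmless), or can be avoided by shrinking a bag, or can be swallowed into $B_5$ — in each configuration one checks the induced adjacency pattern is unchanged. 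Therefore $\htg\linm G$.
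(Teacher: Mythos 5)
Your overall plan---exhibit an explicit induced minor model of $\htg$, with a case analysis on where $s_1,s_2$ sit---is the same idea as the paper's (the paper phrases it as a sequence of contractions rather than bags), but as written the proposal has genuine gaps, and the specific bag allocation you propose fails in concrete configurations. In $\htg$ the two neighbours $v_3,v_4$ of the degree-two vertex are themselves adjacent, so the two branch bags that $B_5$ attaches to must stay adjacent to each other. Putting ``the arc of $S$ joining $s_1,s_2$'' into $B_5$ works against this: if $s_1,s_2$ are both interior to the same subdivided edge $P_{uv}$ and $B_5$ swallows the segment between them, the bags of $u$ and $v$ lose their only connection and no valid model remains; and if the chosen arc passes through a branch vertex, that vertex can no longer serve as one of $B_2,B_3,B_4$. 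The correct move is the opposite of the one you sketch: keep $B_5$ off $S$ entirely and split each subdivided edge between the bags of its two endpoints so that $s_1$ and $s_2$ land in distinct branch bags (always possible since $s_1\neq s_2$). You only assert that ``a short case analysis shows'' a suitable routing exists, and the routing you describe is not it.

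The second, more serious gap is the extra edges, which are the whole point of the lemma (in its application, \autoref{c:deg3}, they are unavoidable) and which you dispatch in one sentence. With your allocation $B_5\supseteq\{x\}\cup \mathring{L_1}\cup \mathring{L_2}$, an extra edge from the interior of $L_1$ to a vertex of $S$ remote from $s_1,s_2$ attaches $B_5$ to a third branch bag; ``shrinking'' $B_5$ can disconnect it from the bag of $s_1$, and ``swallowing'' the offending $S$-vertex into $B_5$ merely transfers the unwanted adjacency to that vertex's neighbours on its subdivided edge. The missing idea is to place $\mathring{L_1}$ and $\mathring{L_2}$ into the bags of $s_1$ and $s_2$ respectively and take $B_5=\{x\}$ (this is exactly what the paper's proof does when it contracts each $L_i$ onto $s_i$ and then contracts the $S$-side down to $K_4$, distinguishing whether $s_1,s_2$ are branch vertices). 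Then every extra edge permitted by the hypothesis---none of which joins $x$ to $S\setminus\{s_1,s_2\}$---runs between two of the four pairwise-adjacent $K_4$-bags, or between $B_5$ and a bag it must be adjacent to anyway, and is harmless, while $x$ keeps exactly two attachments. With that reallocation plus the distinct-bag assignment of $s_1,s_2$, your model-based argument can be completed; as it stands, the proposal does not establish the lemma.
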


\begin{proof}
  Let us call $V=\{v_1, v_2, v_3, v_4\}$ the non-subdivision
  vertices of $S$, i.e.~vertices corresponding to vertices of~$K_4.$ We
  present here a sequence of edge contractions
  in $H$ leading to $\htg$.
  As long as there is a path between two elements of $V \cup \{s_1, s_2, x\}$,
  internally disjoint with this set, we contract the whole path to a single
  edge.   
  
  Once we cannot apply this contraction any more, we end up with a graph 
  that has two parts: the $K_4$-subdivision with at most 
  2 subdivisions (with vertex set $V \cup \{s_1, s_2\}$) and the vertex $x$,
  which is now only adjacent to $s_1$ and $s_2.$

  \smallskip
  \noindent \textit{First case:} $s_1, s_2 \in V.$ The graph $H$ is
  isomorphic to $\htg$: it is $K_4$ plus a vertex of degree~two.

  \smallskip
  \noindent \textit{Second case:} $s_1 \in V$ and $s_2 \not
  \in V$ (and the symmetric case). As vertices of $V$ are the only
  vertices of $H$ that have degree 3 in $S,$ $s_2$ is of degree
  2 in $S$ (it is introduced by subdivision). The contraction of the edge
  between $s_2$ and one of its neighbors in $S$ that is different from
  $s_1$ leads to the first case.

  \smallskip
  \noindent \textit{Third case:} $s_1,s_2 \not \in V.$ As in second
  case, these two vertices have degree two in $S.$ Since no two
  different edges of $K_4$ can have the same endpoints, the
  neighborhoods of $s_1$ and $s_2$ have at most one common vertex.
  Then for every $i\in \{1,2\}$ there is a neighbor $t_i$ of $s_i$ that
  is not adjacent to $s_{3-i}.$ Contracting the edges $\{s_1,t_1\}$
  and $\{s_2,t_2\}$ leads to the first case.
\end{proof}

\begin{corollary}[from \autoref{l:h3struct}]\label{c:deg3} 
  Let $G \in \excl(\htg)$ be a 2-connected graph containing a proper
  $K_4$-subdivision $S$. For every vertex $x \in V(G) \setminus V(S),$
  $N_S(x) \geq 3.$
\end{corollary}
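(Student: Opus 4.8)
The plan is to argue by contradiction through \autoref{l:h3struct}. Suppose $S$ is a $K_4$-subdivision of $G$ and some $x\in\vertices(G)\setminus\vertices(S)$ has at most two neighbours in $S$; the goal is to find inside $G$ an induced subgraph of the shape described in \autoref{l:h3struct}, which by that lemma yields $\htg\linm G$ and so contradicts $G\in\excl(\htg)$.

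Since $G$ is $2$-connected and $\card{\vertices(S)}\geq 4$, the fan lemma provides two paths $L_1,L_2$ from $x$ to $\vertices(S)$ that meet only in $x$, each meeting $\vertices(S)$ only in its endpoint, and ending in two \emph{distinct} vertices $s_1,s_2\in\vertices(S)$. I would pick such a pair $(L_1,L_2)$ minimising $\card{\vertices(L_1)}+\card{\vertices(L_2)}$; minimality then makes each $L_i$ an induced path of $G$ and prevents any edge of $G$ from joining $x$ to the interior of $L_1$ or of $L_2$, since such a chord could be used to shorten a path while keeping all the listed properties. Using the hypothesis that $x$ has at most two neighbours in $S$, one further arranges that every neighbour of $x$ in $S$ is one of $s_1,s_2$: if $x$ has two neighbours in $S$, take $L_1,L_2$ to be the two corresponding edges; if $x$ has exactly one neighbour $s_1$ in $S$, take $L_1=xs_1$ and let $L_2$ be a shortest path in $G-s_1$ from $x$ to $\vertices(S)\setminus\{s_1\}$; if $x$ has no neighbour in $S$, just keep the two paths obtained above.

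Then I would set $H=\induced{G}{\vertices(S)\cup\vertices(L_1)\cup\vertices(L_2)}$, which is an induced subgraph of $G$, hence $H\linm G$. By construction $H$ consists of the $K_4$-subdivision $S$, the extra vertex $x$ joined by the internally disjoint paths $L_1,L_2$ to the distinct vertices $s_1,s_2$ of $S$, together with possibly further edges of $G$ on this vertex set; and each of these further edges lies within $\vertices(S)$, or between $\vertices(L_1)$ and $\vertices(L_2)$, or between the interior of a path and $\vertices(S)$ --- the only remaining possibilities, an edge from $x$ to a path interior or an edge from $x$ to a third vertex of $S$, having been excluded in the previous paragraph. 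Thus $H$ is exactly of the form required by \autoref{l:h3struct}, so $\htg\linm H\linm G$, contradicting $G\in\excl(\htg)$; hence $\card{\neigh_S(x)}\geq 3$. The step I expect to need the most care is this last verification that the induced subgraph spanned by $S$ and the two paths carries no edge outside the three families allowed by \autoref{l:h3struct}; the assumption $\card{\neigh_S(x)}\leq 2$ is used exactly to rule out a third edge from $x$ to $S$, and the minimal choice of $L_1,L_2$ to rule out chords from $x$ into their interiors.
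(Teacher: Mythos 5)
Your argument is correct and follows essentially the same route as the paper: use $2$-connectivity to obtain two paths from $x$ to $S$ meeting only at $x$, and apply \autoref{l:h3struct} to the subgraph induced by $S$, $L_1$, $L_2$ to contradict $G\in\excl(\htg)$ when $\card{\neigh_S(x)}\leq 2$. The extra care you take (minimising the pair of paths, the case analysis on the neighbours of $x$ in $S$, and checking that no disallowed edges appear) merely makes explicit details the paper's shorter proof leaves implicit.
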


\begin{proof}
  As $G$ is 2-connected, we can find when $N_S(x) \leq 2$ two paths from $x$ to $S$ that satisfy the conditions of \autoref{l:h3struct}.
  Therefore, $N_S(x) \geq 3.$
\end{proof}

\subsubsection{Small \texorpdfstring{$K_4$}{K4}-subdivisions}
\label{sec:smallk4sub}

We handle separately the case where $G$ contains a subgraph isomorphic to~$K_4$.

\begin{lemma}\label{l:smallk4}
  Let $G \in \excl(\htg)$ be a 2-connected graph. If $G$ has a subgraph $S$ that is isomorphic to $K_4$, then $G\setminus V(S)$ is
    complete multipartite.
  \end{lemma}

  \begin{proof}
    Let us first show that for every non-adjacent $u,v \in V(G)\setminus V(S)$, $N_S(u) =
    N_S(u)$.
  According to \autoref{c:deg3}, each
  of $u$ and $v$ has at least 3 neighbors in $S$, hence $3 \leq |N_S(u)|,
  |N_S(v)| \leq 4$. If $\left |N_S(u) \right | = \left |N_S(v)
  \right| = 4$, then $N_S(u) = N_S(v)$ and we are done. Towards a
  contradiction, we assume $N_S(u) \neq N_S(v)$. In the case where
  $|N_S(u)| = |N_S(v)| = 3$ (resp.\ $|N_S(u)| = 3$ and $|N_S(u)| = 4$,
  or the other way around), we have $\htg \linm G$ as
  depicted on \autoref{fig:smallk4}.(a), a contradiction.
  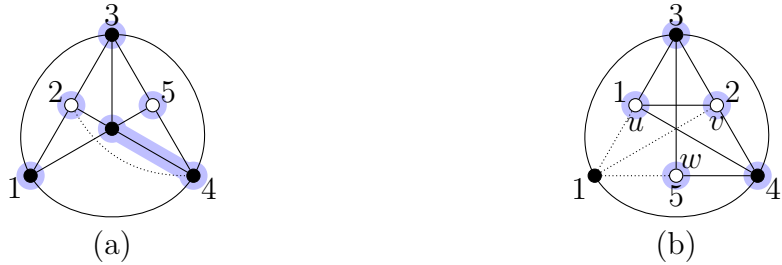
\begin{figure}[h]
  \centering
  \begin{tikzpicture}[every node/.style = black node, scale = 1.25]
    \begin{scope}[xshift = 0cm]
      \draw (0, -1.25) node[normal] {(a)};
      \fill[color =blue!25] (90:1) circle (0.15cm) (330:1) circle (0.15cm) (210:1) circle (0.15cm) (0, 0) circle (0.15cm)
      (30:0.5) circle (0.15cm) (150:0.5) circle (0.15cm);
      \draw[color = blue!25, line width =0.25cm] (0, 0) -- (330:1);
      \draw (90:1) node (a) {} to[bend right=60, label=90:3] (210:1) node[label=201:1] (b) {} to[bend right=60] (330:1) node[label=330:4] (c) {} to[bend right=60] (a);
      \draw (0,0) node (d) {} (a) -- (d) (b) -- (d) (c) -- (d);
      \draw (150:0.5) node[white node, label = 150:2] (v) {};
      \draw (30:0.5) node[white node, label = 30:5] (u) {};
      \draw (u) -- (a) (u) -- (c) (u) -- (d);
      \draw (v) -- (a) (v) -- (b) (v) -- (d);
      \draw[densely dotted] (v) to[bend right] (c);
    \end{scope}
    \begin{scope}[xshift = 6cm]
      \draw (0, -1.25) node[normal] {(b)};
      \fill[color =blue!25]
      (90:1) circle (0.15cm)
      (330:1) circle (0.15cm)
      (-90:0.5) circle (0.15cm) node[normal, color = black, label=-90:\textcolor{black}{5}] {}
      (30:0.5) circle (0.15cm) node[normal, color = black, label=30:\textcolor{black}{2}] {}
      (150:0.5) circle (0.15cm) node[normal, color = black, label=150:\textcolor{black}{1}] {};
      \draw (90:1) node[label=90:3] (a) {} to[bend right=60] (210:1) node[label=210:1] (b) {} to[bend right=60] (330:1) node[label=330:4] (c) {} to[bend right=60] (a);
      \draw (30:0.5) node[white node, label=-90:$v$] (u) {} (150:0.5) node[white node, label =-90:$u$] (v) {} (-90:0.5) node[white node, label = 45:$w$] (w) {};
      \draw (u) -- (a) (u) -- (c);
      \draw (v) -- (a) (v) -- (c);
      \draw (w) -- (a) (w) -- (c);
      \draw (u) -- (v);
      \draw[densely dotted] (u) -- (b) (v) -- (b) (w) -- (b);
    \end{scope}
  \end{tikzpicture}
  \caption[Finding $\htg$ as induced minor in the proof of
  \autoref{l:smallk4}]{Finding $\htg$ as induced minor in the proof of
    \autoref{l:smallk4}. Vertices of $S$ are black and $u,v,w$ are white. Dotted lines represent edges that may be
  present or not. The numbers indicate which vertices of $\htg$ (following the convention of \autoref{fig:h123}) correspond to the subsets of vertices depicted in blue.}
  \label{fig:smallk4}
\end{figure}
  Let us now show that $G \setminus V(S)$ is complete multipartite.
  A graph is complete multipartite iff it does not contain $K_1 + K_2$
  as induced subgraph. Towards a contradiction, we therefore assume
  that there are three vertices $u,v,w \in V(G) \setminus V(S)$ such
  that $\{u,v\}$ is the only edge in $G[\{u,v,w\}]$. 
  According to the paragraph above applied to
  $u$ and $w$ and then to $w$ and $v$, we have $N_S(u) = N_S(v) =
  N_S(w)$. As noted above, each of $u$, $v$ and $w$ have at least
  three neighbors on $S$. In this case again we are able to find $\htg$ as
  an induced minor (in fact, as an induced subgraph), as depicted in
  \autoref{fig:smallk4}.(b). This is a contradiction, hence $G
  \setminus (V(S) \setminus s)$ is complete multipartite.
\end{proof}

Notice that the conclusion of \autoref{l:smallk4} is an outcomes of \autoref{t:dec-h3}.
We now show that some minimum $K_4$-subdivision is a 3-wheel. This will allow us to focus on the case where $S$ is a 3-wheel.
\begin{lemma}\label{l:wheel} 
  If $G \in \excl(\htg)$ is a 2-connected graph with a proper
  $K_4$-subdivision, then some $K_4$-subdivision of $G$ with the minimum number of vertices is a 3-wheel.
\end{lemma}

\begin{proof}
  We show that for every proper $K_4$-subdivision $S$ of $G$, we can
  find a 3-wheel with at most the same number of vertices. Let $x
  \in V(G) \setminus V(S)$ and let $V$ be as in the
  proof of \autoref{l:h3struct}. We call two neighbors of $x$ in
  $S$ \emph{equivalent} if they lie on the same path between two
  elements of $V$. Intuitively, equivalent vertices correspond to the
  same edge of~$K_4$. By \autoref{c:deg3}, we can assume~$\card{N_S(x)} \geq 3$.

  We start with the case where no cycle of $S$ contains three neighbors of $x.$ 
  Notice that for every pair of edges of $K_4$, there is a
  cycle using these edges. This implies that $x$ does neither have two
  equivalent neighbors, nor a neighbor in~$V$.
  Besides, for every choice of four edges of $K_4$, there is a
  Hamiltonian cycle containing
  three of them. We deduce $\card{N_S(x)} = 3$. Let us
  consider the induced minor $H$ of $G[V(S) \cup \{x\}]$ obtained by
  iteratively contracting
  all edges that are incident with at most one vertex of~$V \cup
  N_S(x) \cup \{x\}.$
  By the above remark and as $x$ does not have three neighbors on a
  cycle, there is a vertex of $V(H) \setminus \{x\}$ adjacent to the
  three neighbors of~$x$. Contracting two of the edges incident with this vertex merges
  two neighbors of $x$ and the graph we obtain is a $K_4$ subdivision
  (corresponding to~$S$) together with a vertex of degree 2
  (corresponding to~$x$). By \autoref{l:h3struct}, we have $\htg \linm G$, a contradiction.

  In the remaining case, $S$ has a cycle containing three neighbors of
  $x$. Let $C$ be such a cycle with the minimum number of
  vertices. Notice that $V(S) \setminus V(C)$ is then not empty, hence
  $C \cup \{x\}$ is the desired wheel and does not have more vertices
  than~$S$.
\end{proof}

\subsubsection{Dealing with proper \texorpdfstring{$K_4$}{K4}-subdivisions}
\label{sec:largesub}

In the sequel, we deal with a graph $G$ that is 2-connected and has a proper $K_4$-subdivision, but contains neither
$\htg$ as induced minor, nor $K_4$ as subgraph. Let us consider a subgraph $S$ of $G$ that is a 3-wheel with the minimum number of vertices and, subject to this requirement, has a minimum number of chords (i.e. edges of $E(G) \setminus E(S)$ with both endpoints in $V(S)$).
We denote by $C$ the cycle of this 3-wheel, by $r$ its center, and set $R = G\setminus V(C)$.
These assumptions are implicit in the following lemmas.

As $S$ is not necessarily an induced subgraph of $G$, the cycle $C$ may have chords. We consider this case hereafter.


\begin{lemma}\label{l:onechordnew}
  If $C$ is not an induced cycle in $G$, then $G[V(S)]$ is the prism.
\end{lemma}

\begin{proof}
  Let $u, v, w \in N_C(r)$ be three distinct neighbors of $r$ in
  $C$ and let $C_u$ be the path of $C$ between $v$ and $w$ that
  does not contain the vertex $u$, and similarly for $C_v$
  and~$C_w$.
  Let us assume that $C$ has a chord $\{x,y\}$. Observe that $x$
  and $y$ cannot both belong to $C_l$ for some $l \in \{u,v,w\}$, as
  the deletion of any interior vertex of $\patht{x}{C_l}{y}$ (which
  exist as $\{x,y\}$ is a chord of $C$) would
  leave a $K_4$-subdivision in $S$, contradicting its minimality (see
  \autoref{fig:forbconf}.(a)). Therefore, $x$ and $y$ belong to different
  $C_l$'s, say without loss of generality that $x \in V(C_w)$ and~$y
  \in V(C_v)$.
\begin{figure}[ht]
  \centering
  \begin{tikzpicture}[every node/.style = black node]

    \newcommand{\cycleC}[1]{
      \draw node[label = -90:$r$] (r) at (0,0) {};
      \draw[decorate,decoration={zigzag, amplitude = 0.03cm}] (r) circle (1cm);
      \node[normal, draw = none, label = -60:$C$] at (-60:1) {};
      \foreach \x / \l in {90 / $u$, 210 / $v$, -30 / $w$} {
        \draw (r) -- (\x:1) node[label = \x:\l] (\l) {};
      }
    }
    \begin{scope}[xshift = -5cm]
      \draw (0, -1.5) node[normal] {(a)};
      \cycleC{}
      \draw (120:1) node[label = 120:$x$] {} to (180:1) node[label =
      180:$y$] {};
      \draw (150:1) node[fill = white] {};
    \end{scope}
    \begin{scope}
      \draw (0, -1.5) node[normal] {(b)};
      \cycleC{}
      \draw (150:1) node[label = 120:$x$] {} to[bend left] (-30:1);
      \draw (30:1) node[fill = white] {};
    \end{scope}
  \end{tikzpicture}
  \caption[When $C$ has a chord.]{When $C$ has a chord. In these examples, there is a $K_4$-subdivision that does not use the white vertex, which contradicts the minimality of~$S$. Zigzag lines depict paths with at least one edge.}
  \label{fig:forbconf}
\end{figure}
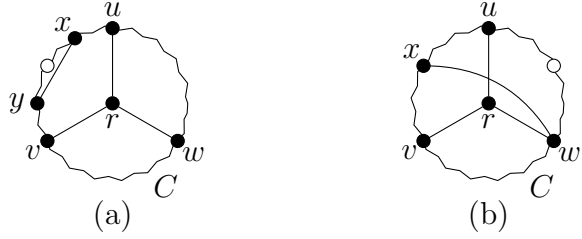

  \smallskip
  \noindent \textit{First case:} $y  = w$ and $x$ belongs to the interior of~$C_w$.
  Observe that if one of $C_u$ or $C_v$ has an internal vertex
  $z$, then $S \setminus z$ still has a $K_4$-subdivision (see
  \autoref{fig:forbconf}.(b)), which would contradict the minimality
  of~$S$. Hence each of $C_u$ and $C_v$ is reduced to an edge. Now,
  notice that if $r$ is adjacent to an internal vertex of $C_w$, then again one can find a
  smaller $K_4$-subdivision, for instance by deleting~$u$.
  The path $C_w$ together with $r$ forms the cycle $C'$ of a
  4-wheel of center~$w$. As noticed above, $C_w$ is chordless and $r$
  has no neighbors on this path. Therefore $C'$ is chordless: it has
  fewer chords than~$C$. This contradicts the definition of~$S$, hence
  this case is not possible.

  \smallskip
  \noindent \textit{Second case:} $x$ and $y$ are interior
  vertices of $C_w$ and $C_v$, respectively. We first show that $|C| =
  5$. As in the previous case, it is easy to see that if one of
  $\patht{u}{C_w}{x}$, $\patht{x}{C_w}{v}$, $\patht{u}{C_v}{y}$,
  $\patht{y}{C_v}{w}$ or $C_u$ has an internal vertex $z$, then
  $S\setminus z$ is not $K_4$-subdivision free. As this contradicts
  the definition of $S$, we deduce that each of them is reduced to an
  edge, proving that~$|C|=5$. In the light of the previous case, no
  endpoints of a chord of $C$ can belong to $\{u,v,w\}$. As $C$ has 5
  vertices, it has only one chord. This concludes the proof.
\end{proof}

\begin{corollary}\label{c:prism}
  If $C$ has a chord, then $G$ is the prism.
\end{corollary}

\begin{proof}
  From \autoref{l:onechordnew}, we get that $S$ is the prism. Observe
  that for every choice of three vertices of the prism, there is a
  cycle of length at most 4 containing them. Let $v \in V(G)
  \setminus V(S)$. The vertex $v$ has at least 3 neighbors in $S$ (\autoref{c:deg3}),
  thus it is the center of a 3-wheel on size at most 5, using a cycle as
  mentioned above. This contradicts the minimality of $S$, the prism, which has six vertices. Hence
  we deduce that $V(G) \setminus V(S)$ is empty: $G$ is the prism.
\end{proof}

The case where $C$ has chords being fixed, we assume in the remaining
of this section that $C$ is a chordless cycle.

\begin{lemma}\label{l:neigh2}
  If for some $t \in V(G)\setminus V(S)$, $N_C(t) \leq 2$, then $|C| = 4$
  and every $t'\in V(G)\setminus V(S)$ such that $N_C(t') \leq 2$ has the same neighborhood on
  $S$ as $t$.
\end{lemma}

\begin{proof}
Let $t \in V(G)
  \setminus V(S)$. As $N_S(t) \geq 3$ (\autoref{c:deg3}) and $N_C(v)
  \leq 2$, we deduce that $r \in N_S(t)$ and
  $|N_S(t)|=3$.
  Let $x,y$ be the two neighbors of $t$ on $C$.
  We define $u$, $v$, $w$, $C_u$, $C_v$, and $C_w$ as in the proof of \autoref{l:onechordnew}.
  We consider different
cases according to the positions of $x$ and $y$.

\smallskip
\noindent \textit{First case:} both $x$ and $y$ belong to one of $C_u,
C_v, C_w$, say $C_u$, without loss of generality, at least one of them being in the
interior of the path. Then contracting the subpath of $C_u$ that links
$x$ to $y$ yield a graph that has a vertex of degree 2, $t$, with
exactly two neighbors on a $K_4$-subdivision, contradicting
\autoref{c:deg3}. Hence this case is not possible.

\smallskip
\noindent \textit{Second case:} $x,y \in \{u,v,w\}$, say $x = u$ and
$y = v$, without loss of generality. Observe that if one of $C_v$ or
$C_u$ has an interior vertex, then the graph induced by
$r, t$ and $V(C_w)$ has a $K_4$-subdivision that is smaller than $S$,
a contradiction. We deduce that each of these paths is reduced to an
edge. In order to show the same thing for~$C_w$, we assume that
there is an interior vertex $z$ to~$C_w$. Note that
$G[u,v,w,r,t]$ has a $K_4$-subdivision. As $C$ is induced, $z$ has degree 2 in $S$.
Then it has at most two neighbors in the
aforementioned $K_4$-subdivision, a contradiction to
\autoref{c:deg3}. Therefore, $C_w$ is an edge. We deduce that
$|S|=4$. That is, $S$ is a $K_4$-subgraph, where we assumed the
opposite. Consequently, this case is not possible either.

\smallskip
\noindent \textit{Third case:} $x$ and $y$ belong to the interior of
two different paths among $C_u$, $C_v$, and $C_w$, say without loss of
generality $C_u$ and $C_v$, respectively. Then by contracting the subpath
of $C_u$ linking $x$ to $w$, we reach the first case.

\smallskip
\noindent \textit{Fourth case:} for some $z \in \{u,v,w\}$, $x$
belongs to the interior of $C_{z}$ and $y =
z$. Without loss of generality we assume that $z=w$.
As in the previous lemmas, it is easy to check that if one of $C_u$,
$C_v$, $\patht{u}{C_w}{x}$, and $\patht{x}{C_w}{v}$ has an interior
vertex, then deleting it and deleting $v$, $u$, $u$, or $v$,
respectively, does not make $G[V(S) \cup \{t\}]$ $K_4$-subdivision
free. This contradicts the minimality of $S$, hence each of these
paths is reduced to an edge. We deduce $|C|=4$. In the light of the
previous remarks and as $|C|=4$, the only possible neighbors for a
vertex $t'$ as in the statement of the lemma are $x$ and $y$, which concludes
the proof.
\end{proof}

We can now focus on vertices of $V(G) \setminus V(S)$ that are
adjacent to at least three vertices of~$C$.
\begin{lemma}\label{l:p4} 
  If some $s \in V(G) \setminus V(S)$ satisfies $|N_C(s)| \geq 3$, then $N_C(s) = N_C(r)$.
\end{lemma}

\begin{proof}
  Towards a contradiction, we assume that some $u \in V(C)$ is adjacent to $r$ but not to~$s$. As $s$ and $r$ play a symmetric role, this is the only case to consider.

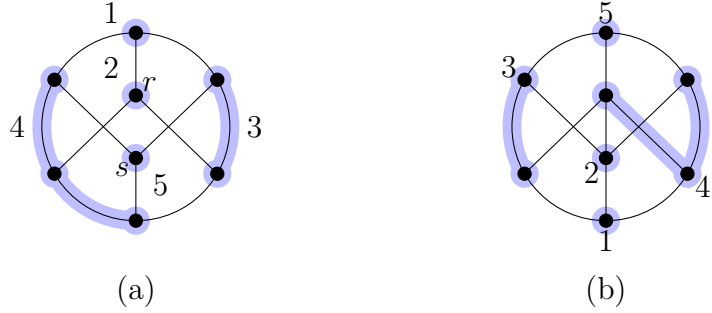
\begin{figure}[h]
  \centering
  \begin{tikzpicture}[every node/.style = black node, scale = 1.25]
    \def \radius {1}
    \begin{scope}[xshift = 0cm]
      \draw (0, -1.75) node[normal] {(a)};
      \begin{scope}[color = blue!25, every node/.style = {normal,
          minimum size = 0, inner sep=3pt}]
        \fill (90:1) circle (0.15cm)
        node[label=170:\textcolor{black}{1}] {};
        \fill (150:1) circle (0.15cm);
        \fill (210:1) circle (0.15cm);
        \fill (270:1) circle (0.15cm);
        \fill (330:1) circle (0.15cm);
        \fill (30:1) circle (0.15cm);
        \fill (90:1/3) circle (0.15cm)
        node[label=135:\textcolor{black}{2}] {};
        \fill (90:-1/3) circle (0.15cm)
        node[label=-45:\textcolor{black}{5}] {};
    \draw[line width =0.25cm] (150:1) arc (150:270:1) (330:1) arc
    (330:390:1);
    \draw (180:1) node[label=180:\textcolor{black}{4}] {};
    \draw (0:1) node[label=0:\textcolor{black}{3}] {};
  \end{scope}
      \draw (0,0) circle (1cm);
    \foreach \i / \l in {0 / u_1, 1 / v_3, 2 / u_3,
      3 / v_2, 4 / u_2, 5 / v_1}{
      \draw (90+\i*60:\radius) node {};
    }
    \node[label=30:$r$] (s) at (90:{\radius / 3}) {};
    \node[label=210:$s$] (t) at (-90:{\radius / 3}) {};
    \foreach \i in {0, 2, 4}{
      \draw (s) -- (90 + \i * 60:\radius);
      \draw (t) -- (150 + \i*60:\radius);
    }
    \end{scope}
    \begin{scope}[xshift = 5cm]
      \draw (0, -1.75) node[normal] {(b)};
      \fill[color = blue!25] (90:\radius/3) circle (0.15cm);
      \fill[color = blue!25] (90:-\radius/3) circle (0.15cm);
      \draw[color = blue!25, line width =0.25cm] (150:1) arc (150:210:1)
      (90:\radius/3) -- (330:1) arc (330:390:1);
      \foreach \i / \l in {0/$5$,1/$3$,2/{},3/$1$,4/$4$,5/{}}{
        \fill[color = blue!25] (90+\i*60:\radius) circle (0.15cm);
        \draw (90+\i*60:\radius) node[label = 90+\i*60:\l] (M\i) {};
      }
    \node (s) at (90:{\radius / 3}) {};
    \node[label=-135:$2$] (t) at (-90:{\radius / 3}) {};
    \draw (0,0) circle (1cm);
    \foreach \i in {0, 2, 4}{
      \draw (s) -- (90 + \i * 60:\radius);
      \draw (t) -- (150 + \i*60:\radius);
    }
    \draw (s) -- (t);
    \end{scope}
  \end{tikzpicture}
  \caption[Models of $\htg$ in the proof of \autoref{l:p4}.]{Models of $\htg$ in the proof of \autoref{l:p4}. The numbers indicate which vertices of $\htg$ (following the convention of \autoref{fig:h123}) correspond to the subsets of vertices depicted in blue.\label{f:graphh}}
\end{figure}

  Let $v,w$ (both distinct from $u$) and $u', v', w'$ be neighbors of $r$ and
  $s$ on $C$, respectively. We consider
  the graph $H$ obtained from $G[V(C) \cup \{r,s\}]$ by iteratively
  contracting every edge of $C$ that is not incident with two vertices of $\{u,
  v,w,u',v', w'\}.$ This graph is an induced
  cycle (as $C$ is induced) on at most 6 vertices, that we call $C'$,
  plus the two vertices $r$ and
  $s$ that have at least three neighbors each on $C$.
  Observe that while two neighbors of $s$ are adjacent and are not both
  neighbors of $r$, we can contract the edge between them and decrease
  by one the degree of $s$, without changing degree of~$r$. 
  If the degree of $s$ reaches two by such means, then by
  \autoref{l:h3struct}, $\htg \linm H$, a contradiction.
  We can thus assume that every vertex of $C'$ adjacent to a neighbor
  of $s$ is a neighbor of~$r$. This is also true when $r$ and $s$ are
  swapped since this argument can be applied to $r$ too. This
  observation implies that $N_S(r) \cap N_S(s) = \emptyset$ (as $u$
  is adjacent to $r$ but not to~$s$, none of its neighbors on $C$ can
  be adjacent to $r,$ and so on along the cycle)
  and that the neighbors of $r$ and $s$ are alternating on $C'$.
  Without loss of generality, we suppose that $C' = uu'vv'ww'$.
  \autoref{f:graphh}.(a) and \autoref{f:graphh}.(b) shows how a model of $\htg$ can then be found
  in this case, depending whether $\{r,s\} \in E(G)$, respectively, a contradiction.
\end{proof}

\begin{lemma}\label{l:cm} 
  $G \setminus V(S)$ is complete multipartite.
\end{lemma}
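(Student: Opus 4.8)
The plan is to reduce the statement to the standard fact that a graph is complete multipartite if and only if it contains no induced subgraph isomorphic to $K_2 + K_1$ (equivalently, its complement contains no induced $P_3$, hence is a disjoint union of cliques). So it is enough to rule out three vertices $a,b,c \in \vertices(R)$ with $\{a,b\}\in\edges(G)$ and $\{a,c\},\{b,c\}\notin\edges(G)$; if $\card{\vertices(R)}\le 2$ there is nothing to prove. The key preliminary observation is that combining \autoref{l:p4} and \autoref{l:p5} yields that \emph{all} vertices of $R$ share a single neighbourhood $N\subseteq\vertices(C)$ in $C$, and by \autoref{r:3neigh} we have $\card{N}\ge 3$.

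Assuming such $a,b,c$ exist, the strategy is to exhibit an induced minor model of $\htg$ in $G$, contradicting $G\in\excl(\htg)$. First I would pick three distinct vertices $x,y,z\in N$, which cut the cycle $C$ into three arcs: the $x$--$y$ subpath $A_1$ of $C$ avoiding $z$, the $y$--$z$ subpath $A_2$ avoiding $x$, and the remaining arc. Then, writing $v_1,\dots,v_5$ for the vertices of $\htg$ as in \autoref{fig:h123} (so that $v_1v_2v_3v_4$ spans the $K_4$ and $v_5$ is the degree-two vertex adjacent to $v_3$ and $v_4$), I would propose the model
\[
\mu(v_1)=\{a\},\quad \mu(v_2)=\{b\},\quad \mu(v_3)=\vertices(A_1),\quad \mu(v_4)=\vertices(A_2)\setminus\{y\},\quad \mu(v_5)=\{c\}.
\]

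The remaining steps are the verification that $\mu$ is a valid induced minor model. Pairwise disjointness is immediate ($\vertices(A_1)\cap\vertices(A_2)=\{y\}$, and $a,b,c\in\vertices(R)$); each part is connected since $A_1$ and $A_2\setminus\{y\}$ are subpaths of $C$. For the adjacencies I would check: $\mu(v_1)\mu(v_2)$ is realised by the edge $\{a,b\}$; $\mu(v_1)$ and $\mu(v_2)$ each reach both $\mu(v_3)$ and $\mu(v_4)$ because $x\in N\cap\vertices(A_1)$ and $z\in N\cap(\vertices(A_2)\setminus\{y\})$ while $N=\neigh_C(a)=\neigh_C(b)$; $\mu(v_3)$ reaches $\mu(v_4)$ through the cycle edge from $y$ to its neighbour on $A_2$; and $\mu(v_5)=\{c\}$ reaches $\mu(v_3)$ and $\mu(v_4)$ for the same reason as $a$, while being adjacent to neither $\mu(v_1)$ nor $\mu(v_2)$ since $\{a,c\},\{b,c\}\notin\edges(G)$. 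These adjacencies match the edge set of $\htg$ exactly, giving the desired contradiction, so $R$ has no induced $K_2+K_1$ and is complete multipartite.

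The main obstacle I anticipate is arranging the model so that its adjacencies coincide \emph{precisely} with those of $\htg$ — in particular making the image $\mu(v_5)=\{c\}$ of the degree-two vertex see both cycle parts $\mu(v_3),\mu(v_4)$ (which is exactly where $\card{N}\ge 3$ enters, guaranteeing each of $A_1$ and $A_2\setminus\{y\}$ retains a vertex of $N$) yet avoid $\{a\}$ and $\{b\}$. A small point worth recording is that this argument does not need $C$ to be an induced cycle: any chord of $C$ either lies inside $\mu(v_3)\cup\mu(v_4)$ or is incident to a vertex left out of the model, so it cannot create a forbidden adjacency; hence \autoref{l:small-r} is not invoked here.
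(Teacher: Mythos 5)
Your proposal is correct and follows essentially the same route as the paper: reduce to forbidding an induced $K_2+K_1$ in $R$, use \autoref{l:p4} (and \autoref{l:p5}) plus the fact that vertices of $R$ have at least three neighbours on $C$, and then realise $\htg$ as an induced minor from the three vertices together with pieces of the cycle. Your explicit two-arc model is just a rephrasing of the paper's contract-$C$-to-a-triangle-and-delete-$y$ construction (and your side remark that chords of $C$ are harmless is consistent with the paper's argument).
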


\begin{proof}
  Let us consider the graph obtained from $G$ by contracting $S$ to
  $K_4$. Observe that this does not impact the adjacencies in
  $G\setminus V(S)$. The result then follows from \autoref{l:smallk4}.
\end{proof}

\begin{lemma}\label{l:stable}
Either $|C| \leq 4$ or $V(G)\setminus V(C)$ is an independent set.
\end{lemma}

\begin{proof}
Assuming that $|C| \geq 5$, let us show that $V(G) \setminus V(C)$ is an independent set.
By \autoref{l:neigh2} and \autoref{l:p4}, the vertices of $V(G) \setminus V(C)$ all have the same neighborhood on $C$, which has size at least~3.
Towards a contradiction, let us assume that there exist two adjacent vertices $x,y \in V(G) \setminus V(C)$.
We define $u$, $v$, $w$, $C_u$, $C_v$, and $C_w$ as in the proof of \autoref{l:onechordnew}.
Observe that the graph induced by $x$, $y$, and $C_w$ contains a $K_4$-subdivision.
We deduce that none of $C_u$ and $C_v$ contains an internal vertex, otherwise the deletion of this vertex and $w$ would produce a graph violating the minimality of~$S$. Symmetrically, $C_w$ has no internal vertex. Hence $|C| = 3$, a contradiction. This proves that  $V(G)\setminus V(C)$ is an independent set.
\end{proof}

We are now ready to prove \autoref{t:dec-h3}.

\begin{proof}[Proof of \autoref{t:dec-h3}]
  Let $G \in \excl(\htg)$ be a 2-connected graph.
  If $G$ does not contain a $K_4$-subdivision, or if $G$ is a subdivision of
  $K_4$, $K_{3,3}$, then we are done (outcomes \itemref{h3-1} or \itemref{h3-2} of the theorem). If
  $G$ contains a $K_4$-subdivision but not a proper one, from
  \autoref{l:notproperk4} we get that $G$ is a subdivision of one of
  $K_4$, $K_{3,3}$, or the prism, in which case the theorem holds
  (outcome \itemref{h3-2}), or that $G$ is a wheel, which has a trivial
  partition $(V(G)\setminus \{r\} ,\{r\})$ (where $r$ is the center of the wheel) satisfying item \itemref{h3-4} of the statement of
  the theorem.

  Therefore we assume that $G$ does not fall in one of the
  aforementioned cases. By \autoref{l:notproperk4}, $G$ then has a 
  proper $K_4$-subdivision. If $G$ has a $K_4$-subgraph $S$, then
  $(V(S), V(G) \setminus V(S)$ is a partition satisfying item
  \itemref{h3-3} of the desired statement, according to \autoref{l:smallk4}.

  We now focus on the case where $G$ has no $K_4$-subgraph and we
  consider a minimal 3-wheel as defined at the beginning of
  \autoref{sec:largesub}, using the same notation.
  The case where $C$ is not induced is not possible as we assume that
  $G$ is not a prism (cf.\ \autoref{c:prism}).
  If $V(G) \setminus  V(S)$ has a vertex $t$ such that $|N_C(t) | =2$,
  then, by the virtue of \autoref{l:neigh2} and \autoref{l:cm}, $|S|  = 5$ and the
  partition $(V(S), V(G)\setminus V(S))$ suits the requirements of
  \itemref{h3-3}. 
  Otherwise, every vertex $t \in V(G)\setminus V(S)$ satisfies $|N_C(t)| \geq 3$. Then, by \autoref{l:p4} and \autoref{l:stable}, $G \setminus V(S)$ is an independent set and its vertices have the same neighborhood on $C$. Therefore, the partition $(V(C),V(G)\setminus V(C))$ satisfies the conditions of outcome \itemref{h3-4}.
\end{proof}

\subsection{From a decomposition theorem to well-quasi-ordering}
\label{sec:wqoh3}

This section is devoted to the proof of \autoref{t:exclh3-wqo}.
We define the two following classes of graphs:
\begin{itemize}
\item $\wm$ is the class of the graphs that admit a partition $(W,M)$ of their vertex set such that $W$ induces a wheel on at most 5 vertices and $M$ a complete multipartite graph;
\item $\ci$ is the class of the graphs that admit a partition $(C,I)$ of their vertex set such that $C$ induces a cycle, $I$ is an independent set, and every vertex of $I$ has the same neighborhood on $C$.
\end{itemize}

These classes respectively correspond to the outcomes \itemref{h3-3} and \itemref{h3-4} of \autoref{t:dec-h3}.
Our proof of \autoref{t:exclh3-wqo} relies on the two following lemmas which are proved in the next~sections.

\begin{lemma}\label{l:wqo-sub}
  For every (unlabeled) graph $G$ and every wqo $(\Sigma, \lleq),$ the class of
  $(\Sigma, \lleq)$-labeled $G$-subdivisions is well-quasi-ordered by
  contractions.
\end{lemma}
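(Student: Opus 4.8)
The plan is to encode each $(S,\lleq)$-labeled $G$-subdivision by a finite tuple: the labels carried by the copy of $\vertices(G)$ sitting inside it, together with, for each edge of $G$, the sequence of labels along the branch that replaces it. Since $G$ is fixed, this encoding lands in a finite product of known wqos, and the product order will be seen to imply the contraction relation. The conclusion then follows at once: a bad sequence of labeled $G$-subdivisions would be sent to a bad sequence in the product, which cannot exist.

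\emph{The encoding.} Fix, once and for all, an arbitrary orientation of each edge of $G$, and write $\edges(G)=\{e_1,\dots,e_m\}$, with $e_j$ oriented from $u_j$ to $v_j$. A $G$-subdivision $G'$ comes with a distinguished copy of $\vertices(G)$ among its vertices (the non-subdivision vertices) and, for each $e_j$, the branch replacing it, that is, the path from $u_j$ to $v_j$ whose internal vertices are the subdivision vertices of $e_j$. To a labeled $G$-subdivision $G'$ associate $\phi(G')=\big(\,(\lambda_{G'}(w))_{w\in\vertices(G)}\,;\ \sigma_1(G'),\dots,\sigma_m(G')\,\big)$, where $\sigma_j(G')\in(\fpowset(S))^{\star}$ is the sequence of labels of the internal vertices of the $e_j$-branch, read from $u_j$ to $v_j$. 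By \autoref{c:fposet}, $(\fpowset(S),\lleq^{\powset{}})$ is a wqo; hence by \autoref{p:highman-wqolabel} so is $(\fpowset(S))^{\star}$ with the induced sequence order; and since $\vertices(G)$ and $\edges(G)$ are finite, iterating \autoref{p:wqo-product} shows that the codomain of $\phi$, equipped with the componentwise order $\preceq$, is a wqo.

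\emph{Comparable codes yield contractions.} It remains to prove: if $\phi(G_1')\preceq\phi(G_2')$ then $G_1'\lctr G_2'$. From such a domination I build a contraction model $\mu$ of $G_1'$ in $G_2'$. First put $w\in\mu(w)$ for every $w\in\vertices(G)$. Now fix $e_j$: let $x_1,\dots,x_k$ be the internal vertices of the $e_j$-branch of $G_1'$ and $y_1,\dots,y_p$ those of the $e_j$-branch of $G_2'$, both in source-to-sink order, and let $\psi\colon\intv{1}{k}\to\intv{1}{p}$ be the increasing injection witnessing that $\sigma_j(G_1')$ is below $\sigma_j(G_2')$ in the sequence order, so $\lambda_{G_1'}(x_i)\lleq^{\powset{}}\lambda_{G_2'}(y_{\psi(i)})$ for all $i$. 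If $k\geq 1$, set $\mu(x_i)=\{y_{\psi(i-1)+1},\dots,y_{\psi(i)}\}$ for $1\le i<k$ (with the convention $\psi(0):=0$) and $\mu(x_k)=\{y_{\psi(k-1)+1},\dots,y_p\}$; if $k=0$, add $y_1,\dots,y_p$ to $\mu(u_j)$. Carrying this out over all $j$: each $\mu(x_i)$ is a subpath of a branch, hence connected; each $\mu(w)$ is a union of subpaths all meeting at $w$, hence connected; the sets are pairwise disjoint; and together they cover $\vertices(G_2')$. Since the internal vertices of a branch of $G_2'$ are adjacent in $G_2'$ only to vertices of that same branch, adjacencies between the $\mu$-sets can be read off branch by branch, and the consecutive-interval structure of the blobs shows that $\mu(a)$ and $\mu(b)$ are adjacent in $G_2'$ exactly when $a\sim b$ in $G_1'$. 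Finally the label-conservation condition holds: for $w\in\vertices(G)$ we have $w\in\mu(w)$ and $\lambda_{G_1'}(w)\lleq^{\powset{}}\lambda_{G_2'}(w)$, while for each $x_i$ we have $y_{\psi(i)}\in\mu(x_i)$ with $\lambda_{G_1'}(x_i)\lleq^{\powset{}}\lambda_{G_2'}(y_{\psi(i)})$, so the required inequality follows because $\lleq^{\powset{}}$ is transitive and extends $\subseteq$. Hence $\mu$ is a contraction model of $G_1'$ in $G_2'$, i.e.\ $G_1'\lctr G_2'$.

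Putting it together: for any infinite sequence $\seqb{G_i'}_{i\in\N}$ of $(S,\lleq)$-labeled $G$-subdivisions, the sequence $\seqb{\phi(G_i')}$ lives in a wqo and so has a good pair $\phi(G_i')\preceq\phi(G_j')$ with $i<j$; by the previous paragraph $G_i'\lctr G_j'$, so $\seqb{G_i'}$ has a good pair, proving the class is wqo by $\lctr$. I expect the only delicate point to be the verification that $\mu$ is an \emph{induced} minor model rather than merely a minor model — that no two $\mu$-sets are adjacent unless the corresponding vertices of $G_1'$ are — and this is exactly where one uses that every branch of a subdivision is a path whose internal vertices have degree two, so that all relevant adjacencies are confined to a single branch and are forced by the interval partition.
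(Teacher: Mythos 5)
Your proof is correct and follows essentially the same route as the paper: encode each labeled $G$-subdivision edge-by-edge via Higman's lemma and a finite product of wqos, then show that comparability of codes yields a contraction model of one subdivision in the other. The only cosmetic difference is that the paper factors the per-branch argument through \autoref{l:path-wqo} (labeled oriented paths are wqo by $\lctr$, with endpoints preserved) and the epi/componentwise-monotonicity machinery of \autoref{r:epi-wqo} and \autoref{r:mono-pair}, whereas you inline Higman on the interior label sequences and build the contraction model of the whole subdivision explicitly.
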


\begin{lemma}\label{t:c-ciwm}
  For every wqo $(\Sigma, \lleq),$ the classes $\lab_{(\Sigma, \lleq)}(\ci)$ and $\lab_{(\Sigma, \lleq)}(\ci)$ are wqo by induced minors.
\end{lemma}

We also use the following result by Thomas.
\begin{proposition}[\cite{Thomas1985240}]\label{p:thomas}
  For every wqo $(\Sigma, \lleq)$, the class of $(\Sigma, \lleq)$-labeled $K_4$-induced minor-free graphs is wqo by induced minors.
\end{proposition}

We now show that $\htg$-induced minor-free graphs are wqo by induced minors.
\begin{proof}[Proof of \autoref{t:exclh3-wqo}]
According to \autoref{p:2c-labels}, it is enough to show that for every wqo
$(\Sigma, \lleq),$ the class of $(\Sigma, \lleq)$-labeled 2-connected graphs not
containing $\htg$ as induced minor is wqo by induced
minors. By~\autoref{t:dec-h3}, this class can be divided into three
subclasses:

\begin{itemize}
\item $K_4$-induced minor-free graphs;  
\item subdivisions of a graph among $K_4$, $K_{3,3}$, and the prism;\label{e:k4-prism}
\item graphs of $\wm \cup  \ci$.
\end{itemize}

\autoref{p:thomas}, \autoref{l:wqo-sub}, and \autoref{t:c-ciwm}
respectively handle these three cases.
Since it is a finite union of wqos, the class of $(\Sigma, \lleq)$-labeled $\htg$-induced minor free graphs is a wqo as well (wrt. induced minors). This
concludes the proof.
\end{proof}

The following sections contain the proofs of
\autoref{l:wqo-sub} and~\autoref{t:c-ciwm}. The technique that
we repeatedly use in order to show that a poset $(A, \lleq_A)$ is a
wqo is the following:
\begin{enumerate}
\item we define a function $f \colon A' \to A.$
  Intuitively, elements of $A'$ can be seen as descriptions (or
  encodings) of objects of $A$ and $f$ is the function constructing
  the objects from the descriptions;
\item we show that $A'$ is wqo by some relation $\lleq_{A'}.$ Usually,
  $A'$ is a product, union or sequence over known wqos so this can be
  done using \autoref{wqomagic};
\item we prove that $f\colon(A', \lleq_{A'}) \to (A, \lleq_A)$ is monotone and surjective
  (sometimes using \autoref{r:mono-pair}) and by \autoref{wqomagic}
  we conclude that $(A, \lleq_A)$ is~a~wqo.
\end{enumerate}

\subsection{Well-quasi-ordering subdivisions}

Let $\opath$ denote the class of paths with at least two vertices and
whose endpoints are 
distinguished, i.e.~one end is said to be the beginning and the other
one the end. In the sequel, $\fst(P)$ denotes the first vertex of the
path $P$ and $\lst(P)$ its last vertex. We extend the relation
$\linm$ to $\opath$ as follows: for every $G,H \in
\opath,\ G \linm H$ if there is an induced minor model $\mu$
of $G$ in $H$ such that $\fst(H) \in \mu(\fst(G))$ and $\lst(H) \in
\mu(\lst(G)),$ and similarly for $\lctr.$

We omit the proof of the following lemma, which follows from the natural correspondence between labeled paths with distinguished ends and sequences of these labels.
\begin{lemma}\label{l:path-wqo}
   For every wqo $(\Sigma, \lleq)$, the poset $(\lab_{(\Sigma,\lleq)}(\opath), \lctr)$ is a wqo.
 \end{lemma}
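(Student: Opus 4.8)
The plan is to encode a labeled oriented path by a single finite sequence over a well-chosen wqo, and then show that the map from sequences to paths is a poset epimorphism for $\lctr$. First I would observe that an oriented path $P \in \opath$ with $\card{\vertices(P)} = n$ and labeling $\lambda$ is completely described by the finite sequence $\seqb{\lambda(v_1), \dots, \lambda(v_n)}$, where $v_1 = \fst(P), \dots, v_n = \lst(P)$ is the vertex ordering induced by the orientation. Thus I define $f \colon (\fpowset(Q))^\star \to \lab_{(Q,\lleq_Q)}(\opath)$ sending a sequence of finite label-sets to the oriented path on that many vertices with the $i$-th vertex receiving the $i$-th label-set (the empty sequence maps to, say, a one-vertex path, or one can restrict to nonempty sequences). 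This $f$ is surjective onto the nonempty-path case, which suffices.

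Next I would establish that the domain is a wqo. By \autoref{c:fposet}, $(\fpowset(Q), \lleq^{\powset{}})$ is a wqo since $(Q,\lleq_Q)$ is; then by \autoref{p:highman-wqolabel}, the poset $\bigl((\fpowset(Q))^\star, (\lleq^{\powset{}})^\star\bigr)$ of finite sequences is a wqo. This is the routine part, following the three-step template described in the excerpt.

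The heart of the argument is showing that $f$ is monotone, i.e.\ that if $\seqb{A_1, \dots, A_p} \mathbin{(\lleq^{\powset{}})^\star} \seqb{B_1, \dots, B_q}$ then $f(\seqb{A_1,\dots,A_p}) \lctr f(\seqb{B_1,\dots,B_q})$. Given the witnessing increasing map $\varphi \colon \intv{1}{p} \to \intv{1}{q}$ with $A_i \lleq^{\powset{}} B_{\varphi(i)}$ for all $i$, I would construct a contraction model $\mu$ of the smaller path $P$ (on vertices $v_1, \dots, v_p$) in the larger path $P'$ (on vertices $w_1, \dots, w_q$) by setting $\mu(v_1) = \{w_1, \dots, w_{\varphi(1)}\}$, $\mu(v_i) = \{w_{\varphi(i-1)+1}, \dots, w_{\varphi(i)}\}$ for $1 < i < p$, and $\mu(v_p) = \{w_{\varphi(p-1)+1}, \dots, w_q\}$; that is, each $v_i$ captures the block of $w$'s between consecutive $\varphi$-values, with the endpoint blocks extended to cover the prefix before $\varphi(1)$ and the suffix after $\varphi(p)$. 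Then: the $\mu(v_i)$ are disjoint and cover all of $\vertices(P')$ (so it is a contraction model candidate); each $\mu(v_i)$ is a subpath of $P'$, hence connected; consecutive blocks are adjacent in $P'$ and non-consecutive blocks are not, so the edges are exactly those of $P$; the orientation is preserved since $w_1 \in \mu(v_1)$ and $w_q \in \mu(v_p)$; and for label conservation, $\lambda_P(v_i) = A_i \lleq^{\powset{}} B_{\varphi(i)} \subseteq \bigcup_{w \in \mu(v_i)} \lambda_{P'}(w)$ (using that the label of a contracted block is the union of the labels of its vertices, and that $\lleq^{\powset{}}$ of one set into a subset of a union gives $\lleq^{\star}$ into that union — here one must match up the definitions of label conservation in the model, which is stated with $\lleq^\star$, against $\lleq^{\powset{}}$; since $\varphi$ in the $\lleq^{\powset{}}$ witness is injective into $B_{\varphi(i)}$ and $B_{\varphi(i)}$ injects into the bigger union, composition gives the required relation). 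Hence $f$ is an epi, and by \autoref{r:epi-wqo} the codomain $(\lab_{(Q,\lleq_Q)}(\opath), \lctr)$ is a wqo.

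The main obstacle I anticipate is purely bookkeeping: handling the two path endpoints correctly (they must absorb the uncovered prefix/suffix of $P'$, and they must be the images of $\fst$ and $\lst$), and reconciling the label-conservation condition, which the excerpt phrases with $\lleq^\star$ on the bag of labels, with the $\lleq^{\powset{}}$ order used on $\fpowset(Q)$ — one small lemma to the effect that $A \lleq^{\powset{}} B$ and $B \subseteq \bigcup C_j$ imply $A \lleq^\star \bigcup C_j$ (as a sequence) settles this. There are also trivial degenerate cases ($p = 1$, or $\varphi$ not surjective onto a prefix/suffix) that need a line each but present no real difficulty.
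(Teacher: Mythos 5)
Your proposal is correct and follows essentially the same route as the paper: encode each labeled oriented path as the sequence of its vertex labels over the wqo $(\fpowset(Q),\lleq^{\powset{}})$, apply \autoref{c:fposet} and \autoref{p:highman-wqolabel}, and show the decoding map is a monotone surjection onto $(\lab_{(Q,\lleq_Q)}(\opath),\lctr)$, concluding by \autoref{r:epi-wqo}. The only difference is presentational: you exhibit the contraction model explicitly as consecutive blocks absorbing the prefix and suffix, whereas the paper realizes the same model by contracting the labels of non-selected vertices to $\emptyset$ and dissolving them; both handle the endpoint and label-conservation conditions in the same way.
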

 
We can now show that labeled subdivisions of a fixed graph are well-quasi-ordered by the contraction relation, i.e.\ \autoref{l:wqo-sub}.

\begin{proof}[Proof of \autoref{l:wqo-sub}]
  Let $G$ be a non labeled graph, let $(\Sigma, \lleq)$ be a wqo and let
  $\mathcal{G}$ be the class of all $(\Sigma, \lleq)$-labeled $G$-subdivisions. We
  set $m = \card{E(G)}.$ Let us show that $(\mathcal{G},
  \lctr)$ is a wqo. First, we arbitrarily choose an orientation to
  every edge of $G$ and an enumeration $e_1, \dots, e_m$ of these
  edges. We now consider the function $f$ that, given a tuple $(Q_1,
  \dots, Q_m)$ of $m$ paths of $\lab_{(\Sigma,\lleq)}(\opath),$ returns the
  graph constructed from $G$ by, for every $i \in \intv{1}{m},$
  replacing the edge $e_i$ by the path $Q_i,$ while respecting the
  orientation, i.e.~the first (resp.\ last) vertex of $Q_i$ goes to
  the first (resp.\ last) vertex of $e_i.$
  As a Cartesian product of wqos
  and since $(\lab_{(\Sigma,\lleq)}(\opath), \lctr)$ is a wqo
  (\autoref{l:path-wqo}), the domain $\lab_{(\Sigma,\lleq)}(\opath)^m$ of $f$
  is well-quasi-ordered by $\lctr^m.$ Notice that
  $f$ is surjective
  on~$\mathcal{G}$.
  In order to show that $(\mathcal{G},\lctr)$ is a wqo, it is enough
  to prove that
  \[f \colon (\lab_{(\Sigma,\lleq)}(\opath), \lctr^m) \to
    (\mathcal{G}, \lctr)\]
  is an monotone, as explained in
  \autoref{wqomagic}, that is, to prove that for every $\mathcal{Q}, \mathcal{R} \in \lab_{(\Sigma,\lleq)}(\opath)^m$ such
  that $\mathcal{Q} \lctr^m \mathcal{R},$ we have $f(\mathcal{Q})
  \lctr f(\mathcal{R}).$ According to \autoref{r:mono-pair}, we
  only need to care of the case where $\mathcal{Q}$ and $\mathcal{R}$
  differ by only one coordinate. By symmetry we may assume that
  they only differ by the first one, i.e. $\mathcal{Q}=(Q_,Q_2, \dots, Q_m)$ and
  $\mathcal{R}=(R, Q_2, \dots, Q_m)$ with $Q
  \lctr R.$ Let $\mu \colon V(Q) \to
  \fpowset(V(R))$ be a contraction model of $Q$ in $R$ and
  let $\mu' \colon V(f(\mathcal{Q})) \to
  \fpowset(V(f(\mathcal{Q})))$ be the trivial contraction
  model of $f(\mathcal{Q}) \setminus V(Q)$ in itself defined by $\forall u \in
  V(f(\mathcal{Q}))\setminus V(Q),\ \mu'(u) = \{u\}.$
  We now consider the function $\nu \colon V(f(\mathcal{Q}))
  \to \fpowset(V(f(\mathcal{R})))$ defined as follows:
  \[
  \nu \colon
  \left \{
    \begin{array}{rcll}
      V(f(\mathcal{Q})) & \to &
      \fpowset(V(f(\mathcal{R})))\\
      u & \mapsto & \mu(u) \quad \text{if}\ u \in V(Q)\\
        u & \mapsto & \mu'(u) \quad \text{otherwise}.
    \end{array}
  \right .
\]

It can be easily checked that $\nu$ is a contraction model of $f(\mathcal{Q})$ in
$f(\mathcal{R}).$ Hence $f(\mathcal{Q}) \lctr f(\mathcal{R})$, as required.
This proves that $(\mathcal{G}, \lctr)$ is a wqo.
\end{proof}

\subsection{Well-quasi-ordering \texorpdfstring{$\wm$ and $\ci$}{WM and CI}}

In this section, we prove \autoref{t:c-ciwm} by first dealing with $\wm$ (\autoref{l:wm-wqo}) and then with $\ci$ (\autoref{l:wqoci}).
The following result is
straightforward consequence of Higman's lemma (see \autoref{wqomagic}).

\begin{corollary}\label{l:indset-wqo}
  If $(\Sigma, \lleq)$ is wqo then the class of $(\Sigma, \lleq)$-labeled independent
  sets (resp.\ cliques) is wqo induced subgraphs.
\end{corollary}

\begin{corollary}\label{c:closure}
  If a class of $(\Sigma, \lleq)$-labeled graphs $(\mathcal{G}, \linm)$ is wqo,
  then so is its closure by finite disjoint union (resp.\ join).
\end{corollary}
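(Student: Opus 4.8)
The plan is to reuse the three-step recipe described just above: encode the graphs of the closure by finite sequences over $\mathcal{G}$, apply Higman's lemma to obtain a wqo of sequences, and produce a poset epimorphism from the sequences onto the closure. I would carry out the argument in full for the closure by finite disjoint union; the case of the join is strictly parallel, as explained at the end.

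First I would observe that, since disjoint union is associative, the closure $\mathcal{G}^+$ of $\mathcal{G}$ by finite disjoint union is exactly the class of graphs of the form $G_1 + \cdots + G_k$ with $k \geq 1$ and $G_1, \dots, G_k \in \mathcal{G}$ (plus, harmlessly, the empty graph). Since $(\mathcal{G}, \linm)$ is a wqo, \autoref{p:highman-wqolabel} gives that $(\mathcal{G}^\star, \linm^\star)$ is a wqo, where $\linm^\star$ is the sequence relation induced by $\linm$. I would then define $f \colon \mathcal{G}^\star \to \mathcal{G}^+$ by $f(\seqb{G_1, \dots, G_k}) = G_1 + \cdots + G_k$ (sending the empty sequence to the empty graph). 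By the above description of $\mathcal{G}^+$, the map $f$ is surjective, so by \autoref{r:epi-wqo} it suffices to prove that $f$ is monotone.

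For the monotonicity step, assume $\seqb{G_1, \dots, G_k} \linm^\star \seqb{H_1, \dots, H_l}$, witnessed by an increasing (hence injective) function $\varphi \colon \intv{1}{k} \to \intv{1}{l}$ with $G_i \linm H_{\varphi(i)}$ for every $i$. I would fix an induced minor model $\mu_i$ of $G_i$ in $H_{\varphi(i)}$ for each $i$, and let $\mu$ be the map on $\vertices(G_1 + \cdots + G_k)$ obtained by gluing the $\mu_i$ together, that is, $\mu(u) = \mu_i(u)$ when $u \in \vertices(G_i)$. The claim is that $\mu$ is an induced minor model of $f(\seqb{G_1, \dots, G_k})$ in $f(\seqb{H_1, \dots, H_l})$: the sets $\mu(u)$ are pairwise disjoint because $\varphi$ is injective, so distinct $G_i$ send their vertices into distinct, hence vertex-disjoint, components $H_{\varphi(i)}$ of $H_1 + \cdots + H_l$; connectivity of the subgraph induced by each $\mu(u)$ and label conservation are inherited componentwise, since there are no edges between components; and the adjacency condition is verified in two cases — two vertices of the same $G_i$ (inherited directly from $\mu_i$, because embedding $H_{\varphi(i)}$ into the larger disjoint union neither adds nor removes edges inside it), and two vertices of distinct $G_i, G_{i'}$ (non-adjacent on both sides, on the right because $\varphi(i) \neq \varphi(i')$ forces the images into different components). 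I expect this last verification — keeping track of the ``induced'' part of the model, ensuring no spurious adjacency is created between components and that within-component adjacency is untouched — to be the only slightly delicate point, although it is routine. Once $f$ is monotone, \autoref{r:epi-wqo} yields that $(\mathcal{G}^+, \linm)$ is a wqo.

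Finally, for the join, write $\ast$ for the operation adding all edges between the two vertex sets, and let $\mathcal{G}^\ast$ be the corresponding closure, which again equals the class of all $G_1 \ast \cdots \ast G_k$ with $G_i \in \mathcal{G}$. The same encoding, the same map $f$ with $\ast$ in place of $+$, and the same argument apply, with a single change in the adjacency check: two vertices of distinct $G_i, G_{i'}$ are now adjacent, which is matched on the right because $\varphi(i) \neq \varphi(i')$ makes the distinct parts $H_{\varphi(i)}$ and $H_{\varphi(i')}$ completely joined; within a single part the join adds no edge, so within-component adjacency is again unchanged. One concludes once more by \autoref{r:epi-wqo}.
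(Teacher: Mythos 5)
Your proof is correct and follows essentially the same route as the paper: encode each graph of the closure by a finite collection of members of $\mathcal{G}$, get a wqo on the encodings, and exhibit a poset epimorphism onto the closure, with monotonicity verified by gluing the induced minor models componentwise exactly as you describe. The only cosmetic difference is that you apply Higman's lemma to sequences over $\mathcal{G}$ directly, whereas the paper routes the same argument through \autoref{l:indset-wqo} ($\mathcal{G}$-labeled independent sets are wqo by the induced subgraph relation), which is itself proved via Higman; the substance is identical.
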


\begin{proof}
Let $\mathcal{U}$ be the closure of $\lab_{(\Sigma, \lleq)}(\mathcal{G}, \linm)$ by
disjoint union. Observe that every graph of $\mathcal{U}$ can be
partitioned in a family of pairwise non-adjacent graphs of
$\mathcal{G}.$ Therefore we can define a function mapping every
$\mathcal{G}$-labeled independent set to the graph of $\mathcal{U}$
obtained from $G$ by replacing each vertex by its label (which is an
$(\Sigma, \lleq)$-labeled graph). It is easy to check that this function of $(\mathcal{G}, \linm) \to (\mathcal{U}, \linm)$ is monotone and surjective.
Together with \autoref{wqomagic} and \autoref{l:indset-wqo}, this yields
the desired result.
\end{proof}

\begin{corollary}\label{c:kmp-wqo}
  If $(\Sigma, \lleq)$ is a wqo then the class of 
  $(\sigma, \lleq)$-labeled complete multipartite graphs are wqo by induced
  subgraphs.
\end{corollary}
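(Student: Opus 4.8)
The plan is to view every complete multipartite graph as the \emph{join} of its colour classes, each of which is an independent set, and then to lift the well-quasi-ordering of labelled independent sets through this construction using the paper's standard epi technique. Write $\mathcal{I}$ for the class of $(S,\lleq)$-labelled independent sets, which is wqo by the induced subgraph relation by \autoref{l:indset-wqo}; by Higman's Lemma (\autoref{p:highman-wqolabel}) the class $\mathcal{I}^\star$ of finite sequences over $\mathcal{I}$ is then wqo by $\lisgr^\star$. I would define $f \colon \mathcal{I}^\star \to \lab_{(S,\lleq)}(\kmult)$ by sending $\seqb{J_1,\dots,J_k}$ to the graph obtained from the disjoint union $J_1 + \dots + J_k$ by adding every edge between $J_i$ and $J_j$ for all $i \neq j$ (the join of the $J_i$'s), keeping all labels. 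Since every complete multipartite graph is exactly the join of its colour classes, and each colour class is an independent set, $f$ is surjective onto $\lab_{(S,\lleq)}(\kmult)$.

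The heart of the argument is to show that $f$ is monotone from $(\mathcal{I}^\star,\lisgr^\star)$ to $(\lab_{(S,\lleq)}(\kmult),\lisgr)$; together with surjectivity this makes $f$ an epi, and then \autoref{r:epi-wqo} immediately gives that $\lab_{(S,\lleq)}(\kmult)$ is wqo by $\lisgr$. For monotonicity, suppose $\seqb{J_1,\dots,J_k} \lisgr^\star \seqb{J'_1,\dots,J'_l}$, witnessed by an increasing (hence injective) map $\varphi \colon \intv{1}{k} \to \intv{1}{l}$ together with, for each $i$, an induced subgraph model $\mu_i$ of $J_i$ in $J'_{\varphi(i)}$. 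I would glue the $\mu_i$ into a single map $\mu$ with values in $\fpowset(\vertices(f(\seqb{J'_1,\dots,J'_l})))$ and check that it is an induced subgraph model of $f(\seqb{J_1,\dots,J_k})$: disjointness, connectedness (each $\mu(v)$ a singleton), and label conservation are inherited directly from the $\mu_i$; adjacency inside a single part is handled by the corresponding $\mu_i$; and two vertices lying in distinct parts $J_i,J_j$ are adjacent in the source precisely when $i\neq j$, which by injectivity of $\varphi$ is precisely when $\varphi(i)\neq\varphi(j)$, i.e.\ precisely when their images, lying in the distinct parts $J'_{\varphi(i)},J'_{\varphi(j)}$, are adjacent in the target.

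I expect the only genuine subtlety to be this monotonicity check, and within it the single load-bearing point that the injectivity of $\varphi$ is what makes the ``edges between different colour classes'' condition match up in both directions; everything else is a routine transfer from the per-part models. (Equivalently, this conclusion can be obtained as the join case of \autoref{c:closure} applied to $\mathcal{I}$, its proof adapted verbatim to the induced subgraph relation; or dually, since complementation is an isomorphism of the induced subgraph order and turns complete multipartite graphs into disjoint unions of cliques, one may instead apply the disjoint-union case of \autoref{c:closure} to the class of $(S,\lleq)$-labelled cliques, which is wqo by the corollary stated just before \autoref{c:closure}.)
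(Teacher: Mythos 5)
Your proof is correct and is essentially the paper's own (implicit) argument: the paper states this corollary without proof, the intended derivation being \autoref{l:indset-wqo} combined with the join case of \autoref{c:closure}, and your direct epi from $(\mathcal{I}^\star,\lisgr^\star)$ onto labelled complete multipartite graphs is exactly that argument unfolded, with the injectivity of $\varphi$ carrying the adjacency check as you say. The only caveat — that \autoref{c:closure} is literally stated for $\linm$ and must be adapted to $\lisgr$ — is one you already flag, and your explicit construction takes care of it.
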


\begin{lemma}\label{l:wm-wqo}
  If $(\Sigma, \lleq)$ is a wqo then the class of $(\Sigma, \lleq)$-labeled graphs of $\wm$ is well-quasi-ordered by induced subgraphs.
\end{lemma}

\begin{proof}
For every graph $G$ of $\wm$, let $(W_G, M_G)$ be a partition of $V(G)$ as in the definition of $\wm$.
Let $S$ be a wheel on at most 5 vertices and let us consider the subclass $\wm(S)$ of all $(\Sigma, \lleq)$-labeled graphs $G \in \wm$ such that $W_G$ is (isomorphic to) the wheel~$S$. We set $s=|S|$ and choose an ordering $v_1, \dots, v_s$ of the vertices of $S$.
In this proof, for every $G,H \in \wm$, we write $H \lisgrp G$ if $H$ is an induced subgraph of $G$ that can be obtained without deleting vertices of $W_G$.
Observe that well-quasi-ordering by $\lisgrp$ implies well-quasi-ordering by $\lisgr$. Let us show that $\wm(S)$ is well-quasi-ordered by~$\lisgrp$.

For every $G \in \wm(S)$ and every $v \in V(G)$, let $\tau(v) \subseteq \{0,1\}^{s}$ be a tuple encoding the adjacencies of $v$ on $S$. Formally, for a fixed ordering of $V(S)$, the $i$-th coordinate of $\tau(v)$ is equal to 1 if $v$ is adjacent to the $i$-th vertex of $S$ and to 0 otherwise, for every $i\in \intv{1}{s}$.
Recall that for every $G \in \wm(S)$, the label $\lambda_G(v)$ of a vertex $v\in V(G)$ is a finite subset of~$\Sigma$. Let $\lambda_G'(v) = \{(l,\tau(v)),\ l\in \lambda(v)\}$. Informally, we add to the label of $v$ information about its adjacency in $S$. The new label is a subset of $\Sigma\times \{0,1\}^s$.
Let $f$ be the function mapping every $G \in \wm(S)$ to $(\lambda_G(v_1), \dots, \lambda_G(v_s), J_G)$,
where $J_G$ is the graph obtained from $G[M_G]$ by relabeling every vertex $v$ with $\lambda'_G(v)$. We see $J_G$ as a $(\Sigma\times \{0,1\}^s, \lleq \times =)$-labeled graph.
Observe that $f$ is injective and that for every $G,H \in \wm(S)$,
\[
  H \lisgrp G \iff f(H) \mathbin{\underbrace{\lleq^\star \times \dots \times \lleq^\star}_{s\ \text{times}} \times \lisgr} f(G).
\]
According to \autoref{wqomagic}, $(\wm(S), \lisgrp)$ (the domain of $f$) is a wqo iff the image of $f$ is wqo by $\underbrace{\lleq^\star \times \dots \times \lleq^\star}_{s\ \text{times}} \times \lisgr$.
For every $G$, $J_G$ is a complete multipartite graph labeled with a wqo  hence, according to \autoref{c:kmp-wqo}, $\mathcal{J} = \{J_G,\ G \in\wm(S)\}$ is well-quasi-ordered by $\lisgr$.
The image of $f$ is a subset of the Cartesian product of several occurrences of $(\Sigma, \lleq)$ with $(\mathcal{J}, \lisgr)$, thus it is a wqo. This implies that $(\wm(S), \lisgr)$ is a wqo.

Since there is a finite number of non-isomorphic wheels on at most 5 vertices, the $(\Sigma, \lleq)$-labeled graphs of $\wm$ form a finite union of wqo, hence they are wqo by $\lisgr$.
\end{proof}

\begin{lemma}\label{l:wqoci}
  If $(\Sigma, \lleq)$ is a wqo then the class of $(\Sigma, \lleq)$-labeled graphs of $\ci$ is wqo by induced minors.
\end{lemma}

\begin{proof}
  We consider the function
  \[
    f \colon (\lab_{(\Sigma,\lleq)}(\opath)^\star \times (\fpowset(\Sigma), \lleq^{\mathcal{P}})^\star, \lctr^\star \times \lleq^{\mathcal{P}\star}) \to (\lab_{(\Sigma,
    \lleq)}(\ci), \linm)
\] that, given a sequence $\seqb{R_0,\dots,
  R_{k-1}}$ of $(\Sigma, \lleq)$-labeled paths of $\opath$
and a sequence $A$ of subsets of $\Sigma$, returns the graph
  constructed as follows:

   \begin{enumerate}[(i)]
   \item for every element $a \in A$, create a new vertex and label it with~$a$;\label{setp1}
  \item in the disjoint union of these vertices and the paths of $\{R_i\}_{i \in
      \intv{0}{k-1}}$, add an edge between $\lst(R_i)$ and $\fst(R_{(i + 1) \mod
      k})$, for every $i \in \intv{0}{k-1}$;
  \item add all possible edges between $\fst(R_i)$ and the vertices created in the first step, for every $i \in \intv{0}{k-1}$.
  \end{enumerate}

  The domain of $f$ is a wqo, as a Cartesian product of wqos (cf.\ \autoref{l:path-wqo}).
  Observe that its image is~$\lab_{(\Sigma,\lleq)}(\ci)$.
  To show that this set is wqo, it is enough to prove that $f$ is monotone, according to \autoref{wqomagic}. By \autoref{r:mono-pair}, this can be done by proving the two following implications:
  \begin{align*}
    \forall R \in \lab_{(\Sigma,\lleq)}(\opath)^\star,\ \forall A,B \in \Sigma^{\mathcal{P}\star},\ &A \lleq^{\star} B \Rightarrow f(R, A) \linm f(R, B),\ \text{and}\\
    \forall Q,R \in \lab_{(\Sigma,\lleq)}(\opath)^\star,\ \forall A \in \Sigma^{\mathcal{P}\star},\ &Q \lctr^\star R \Rightarrow f(Q,A) \linm f(R,J).    
  \end{align*}

  \noindent \textit{First implication.} Let us call $a_1, \dots, a_{|A|}$ and $b_1, \dots, b_{|B|}$ the elements of $A$ and $B$, respectively.
  As $A \lleq^{\star} B$, there is a function $\varphi \colon \intv{1}{|A|} \to \intv{1}{|B|}$ such that for every $i \in \intv{1}{|A|}$, we have $a_i \lleq b_{\varphi(i)}$.
Let us call $v_i$ the vertex labeled $b_i$ in  step \itemref{setp1} of the construction of $f(R,B)$.
  Then $f(R,A)$ can be obtained from $f(R,B)$ by first deleting the vertices the form $v_i$ with $i \in \intv{1}{|B|} \setminus \varphi(\intv{1}{|A|})$ and then, for every $i \in \intv{1}{|A|}$, contracting the label of the vertex $v_{\varphi(i)}$ (which is $b_{\varphi(i)}$) to~$a_i$. Hence $f(R, A) \linm f(R, B)$.

  \smallskip
  \noindent \textit{Second implication.}
Let $Q_0, \dots, Q_{k-1}$ and $R_0, \dots,
R_{l-1}$ be the elements of $Q$ and $R$, respectively.
By definition of the relation $\lctr^\star,$ there is an increasing
function $\varphi \colon \intv{0}{k-1} \to \intv{0}{l-1}$ such that
\[
\forall i \in \intv{0}{k-1},\ Q_i \lctr R_{\varphi(i)}.
\]
Let us call $\mu_i$ a contraction model of $Q_i$ in $R_i$, for every $i \in \intv{0}{k-1}$.
Let $\mu \colon V(f(Q,A)) \to \fpowset(V(f(R, A)))$ be the function that maps a vertex $v$
\begin{itemize}
\item to $\{v\}$ if it has been created during step \itemref{setp1} of the construction of $f(Q,A)$;
\item to $\mu_i(v)$ otherwise, where $i$ is such that $v \in Q_i$.
\end{itemize}
It can be checked that $\mu$ is a contraction model of $f(Q,A)$ in $f(R,A)$. As a consequence, $f(Q,A) \linm f(R,A)$.

\smallskip
We deduce that $\lab_{(\Sigma, \lleq)}(\ci)$ is wqo by induced minors, as desired.
\end{proof}

\section{Graphs not containing
  \texorpdfstring{$\gem$}{Gem}}
\label{sec:wqogem}

The purpose of this section to give a proof to
\autoref{t:exclgem-wqo}. This will be done by first proving a
decomposition theorem for graphs of $\excl(\gem)$, and then using this
theorem to prove that $(\excl(\gem), \linm)$ is a wqo.

\subsection{A Decomposition theorem for \texorpdfstring{$\excl(\gem)$}{Excl(Gem)}}

This section is devoted to the proof of \autoref{t:decgem}, which
is split in several lemmas. 
In the sequel, $G$ is a 2-connected graph of~$\excl(\gem)$.
When $G$ is 3-connected, we will rely on the following result
originally proved by Ponomarenko.
\begin{proposition}[\cite{ponomarenko91}]
	\label{l:pono}
	Every 3-connected $\gem$-induced minor-free graph is either a
        cograph, or has an induced subgraph $S$ isomorphic to $P_4$,
        such that every connected component of $G \setminus S$ is a cograph.
\end{proposition}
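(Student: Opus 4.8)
The plan is to prove the only non-vacuous half: a $3$-connected $\gem$-induced minor-free graph $G$ that is not a cograph has an induced $P_4$ whose deletion leaves only cographs. I would first record the elementary fact that cographs are closed under induced minors --- they are closed under disjoint union, join and induced subgraph, and contracting an edge of a cograph $G$ (which lies inside a component of $G$, inside a component of $\overline G$, or across a join) again yields a cograph, by induction on $|V(G)|$. Hence a cograph component of $G\setminus S$ cannot secretly contain a $\gem$-model, and it genuinely suffices to locate one good $P_4$.

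Then I would argue by contradiction and minimality. Suppose $G$ is $3$-connected, $\gem\not\linm G$, not a cograph, and that no induced $P_4$ is good; among all induced $P_4$'s pick $S=s_1s_2s_3s_4$ minimising the number of vertices lying in non-cograph components of $G\setminus S$. Let $K$ be such a component and $\pi=p_1p_2p_3p_4$ an induced $P_4$ inside $K$. Three-connectivity gives cheap facts: $N_S(K)$ is a cutset, so $|N_S(K)|\ge 3$; any component $L$ of $K\setminus V(\pi)$ with $N_G(L)\cap V(S)=\emptyset$ satisfies $N_G(L)\subseteq V(\pi)$ and $|N_G(L)|\ge 3$, and $|N_G(L)|=4$ would make $\pi+L$ a $\gem$-model, so such an $L$ is attached to exactly three vertices of $\pi$.

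The core step is to build a connected set $B\subseteq V(G)\setminus V(\pi)$ adjacent to all four $p_i$; then $\pi$ (already an induced $P_4$) together with $B$ is an induced minor model of $\gem$, a contradiction. No such $B$ lives inside $K\setminus V(\pi)$, for that would be a $\gem$-model inside $K$; and two components of $K\setminus V(\pi)$ attached to two \emph{different} triples of $\pi$ would yield such a $B$ (a short check, in the spirit of the explicit models built in \autoref{l:p4} and \autoref{l:p5}), so all far-attached pieces hang off one common triple, say $\{p_1,p_2,p_3\}$, and the outside-neighbours of $p_4$ all lie in $V(S)$. Menger then supplies three vertex-disjoint $V(S)$--$V(\pi)$ paths, and patching these with the connected set $S$ and the pieces of $K\setminus V(\pi)$ touching $S$ produces a connected subset of $V(G)\setminus V(\pi)$ meeting $p_1,p_2,p_3$ (along $K$) and $p_4$ (through $S$): the required $B$.

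The hard part --- and what I expect to be the main obstacle --- is controlling the interaction of $S$ with $\pi$ well enough to push the model above through, or, when a stubborn configuration refuses to yield a $\gem$, to extract from $\pi$ and a bounded number of vertices on the Menger paths a new induced $P_4$ $S'$ whose deletion leaves strictly fewer vertices in non-cograph components, contradicting the choice of $S$. This is a finite case analysis driven by which three vertices of $\pi$ the Menger paths reach, the attachment pattern of the dangling pieces of $K\setminus V(\pi)$, and how $N_S(K)$ meets the path $S$; each case closes with a routine verification, using closure of cographs under induced minors, that the graph in hand contains a $\gem$-model. This kind of bookkeeping is the substance of Ponomarenko's argument.
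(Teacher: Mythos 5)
You should first note that the paper does not prove \autoref{l:pono} at all: it is imported as a black box from Ponomarenko~\cite{ponomarenko91}, so there is no in-paper argument to compare against and your proposal has to stand on its own as a proof. As written, it does not. The decisive content --- showing that in a minimal bad configuration one can always either exhibit a $\gem$-model or re-route to a better induced $P_4$ --- is exactly what you defer as ``a finite case analysis'' and ``the substance of Ponomarenko's argument''. That is the theorem, not bookkeeping, so the proposal is an outline with the core missing rather than a proof.

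Beyond the deferral, two of the intermediate claims are shaky as stated. First, two components $L_1,L_2$ of $K\setminus V(\pi)$ attached to two different triples of $\pi$ do not give a connected set $B$ dominating $\pi$: their union is disconnected (they meet only $\pi$), so your single mechanism ``connected $B$ adjacent to all four $p_i$'' does not apply. A $\gem$ may still be present, but only via a different model that contracts inside $\pi$ itself (e.g.\ with attachments $\{p_1,p_2,p_3\}$ and $\{p_1,p_2,p_4\}$ one must contract $p_2p_3$ and take the induced path $x_1p_1x_2p_4$ dominated by the contracted vertex), and for some pairs of triples no $\gem$ arises at all --- note that a single blob attached to a non-consecutive triple such as $\{p_1,p_2,p_4\}$ creates no $\gem$ --- so the ``short check'' is neither uniform nor obviously exhaustive. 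Second, the Menger step only yields a connected set adjacent to the three endpoints of the fan on $\pi$; the far-attached components cannot be absorbed into $B$ (all their neighbours lie on $\pi$), so adjacency of $B$ to the fourth vertex of $\pi$ is not guaranteed, and the fallback of extracting a new induced $P_4$ that strictly decreases the number of vertices in non-cograph components is only asserted, with no argument that such an $S'$ exists. (Your preliminary observation that cographs are closed under edge contraction is correct, but it is tangential to what the statement requires.) Until the case analysis is actually carried out, the argument has a genuine gap at its central step.
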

Therefore we will here focus on the case where
$G$ is 2-connected but not 3-connected.
A \emph{rooted diamond} is a graph which can be constructed from a rooted $C_4$ by adding a chord incident with exactly one endpoint of the root (cf.~\autoref{fig:rd}).

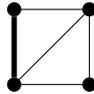
\begin{figure}[ht]
	\centering
	\begin{tikzpicture}[every node/.style = black node]
          \draw (0,1) node (C) {} -- (1,1) node (B) {} -- (1,0) node (D) {} -- (0,0) node (A) {} -- (B);
          \draw[line width = 2] (A) -- (C);
	\end{tikzpicture}
	\caption{A rooted diamond, the root being the thick edge.\label{fig:rd}}
\end{figure}

\begin{lemma}\label{l:rdtogem}
  Let $S=\{v_1, v_2\}\in E(G)$ be a cutset in a graph $G$ and let $J$ be a component of $G\setminus S$. Let $H$ be the graph
  $\induced{G}{V(J) \cup \{v_1, v_2\}}$ rooted at
  $\{v_1,v_2\}$. If $H$ has a rooted diamond as induced minor, then~$\gem \linm
  G$.
\end{lemma}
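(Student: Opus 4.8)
The plan is to build an explicit induced minor model of $\gem$ in $G$, combining the rooted diamond found inside $H$ with a connected vertex set living on the other side of the cut $\{v_1,v_2\}$. First I would fix a rooted induced minor model $\mu$ of the rooted diamond in $H$ and name the four diamond vertices $\alpha,\beta,\gamma,\delta$, where $\alpha\beta$ is the added chord (so $\alpha,\beta$ have degree three and $\gamma,\delta$ degree two) and $\alpha\gamma$ is the root. By root preservation I may assume $v_1\in\mu(\alpha)$ and $v_2\in\mu(\gamma)$ (swapping $v_1$ and $v_2$ otherwise). Since $\mu(\beta)$ and $\mu(\delta)$ are disjoint from $\mu(\alpha)\ni v_1$ and $\mu(\gamma)\ni v_2$, and all four bags lie in $\vertices(H)=\vertices(C)\cup\{v_1,v_2\}$, we get $\mu(\beta)\cup\mu(\delta)\subseteq\vertices(C)$; also each of the four bags induces a connected subgraph of $G$ (the same one it induces in $H$, since $H$ is induced in $G$).

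Next I would produce the external bag. Because $G$ is 2-connected and $\{v_1,v_2\}$ is a cutset, $G\setminus\{v_1,v_2\}$ has a component $C_2\neq C$, and each of $v_1,v_2$ has a neighbour in $C_2$ (otherwise the other vertex alone would disconnect $G$). Taking a $v_1$--$v_2$ path $P$ whose internal vertices all lie in $\vertices(C_2)$ and setting $W=\vertices(P)\setminus\{v_1,v_2\}$, I obtain: $W\neq\emptyset$, $\induced{G}{W}$ is connected, $W$ is disjoint from $\mu(\alpha),\mu(\beta),\mu(\gamma),\mu(\delta)$, $W$ is adjacent in $G$ to both $v_1$ and $v_2$, and — crucially — $W$ has no neighbour in $\vertices(C)$, since $W$ and $\vertices(C)$ lie in different components of $G\setminus\{v_1,v_2\}$.

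Then, writing $\gem$ as a path $p_1p_2p_3p_4$ plus a dominating vertex $z$, I would set $\mu'(z)=\mu(\alpha)$, $\mu'(p_1)=W$, $\mu'(p_2)=\mu(\gamma)$, $\mu'(p_3)=\mu(\beta)$, $\mu'(p_4)=\mu(\delta)$ and check that $\mu'$ is an induced minor model of $\gem$ in $G$. The five bags are pairwise disjoint and connected. For the adjacencies: $\mu(\alpha)$ meets $W$ through $v_1$ and meets $\mu(\beta),\mu(\gamma),\mu(\delta)$ through the diamond edges $\alpha\beta,\alpha\gamma,\alpha\delta$, so $z$ dominates the other four bags; $W$ meets $\mu(\gamma)$ through $v_2$, and $\mu(\gamma)$ meets $\mu(\beta)$ and $\mu(\beta)$ meets $\mu(\delta)$ through the diamond edges $\beta\gamma,\beta\delta$, so $p_1p_2p_3p_4$ is a path. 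For the non-adjacencies: $p_1p_3$ and $p_1p_4$ are absent because $W$ has no neighbour in $\vertices(C)\supseteq\mu(\beta)\cup\mu(\delta)$, and $p_2p_4$ is absent because $\gamma\delta$ is exactly the missing edge of the diamond. This yields $\gem\linm G$.

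I do not expect a genuine obstacle: once the external path $P$ is in place, the remainder is a finite adjacency check. The one delicate point is the bookkeeping of which bag takes which role — in particular, recognising that the $P_4$ of the $\gem$ should be read off as $W$--$\mu(\gamma)$--$\mu(\beta)$--$\mu(\delta)$ (exploiting that $\mu(\beta),\mu(\delta)\subseteq\vertices(C)$ are cut off from $W$, whereas $\mu(\gamma)\ni v_2$ is not), and that the bag playing the dominating vertex is $\mu(\alpha)$, the one containing $v_1$, which is the degree-three endpoint of the root. The sole use of 2-connectivity is to guarantee that $P$ exists and reaches both $v_1$ and $v_2$.
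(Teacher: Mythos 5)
Your proof is correct and takes essentially the same route as the paper: the paper's proof turns $H$ into the rooted diamond, keeps one other component $C'$ of $G\setminus S$, and contracts $C'$ to a single vertex adjacent to both endpoints of the root, which yields $\gem$ directly. Your explicit induced minor model (with the bag $W$ taken from a path through the second component, and the adjacency/non-adjacency checks spelled out, including the 2-connectivity argument the paper leaves implicit) is just a more detailed rendering of that same construction.
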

\begin{proof}
  Let $J'$ be a component of $G\setminus S$ other than $J$ and let
  $G'$ be the graph obtained from $G$ by:
  \begin{enumerate}
  \item applying the necessary operations (contractions and vertex
    deletions) to transform $\induced{G}{V(J) \cup \{v_1,
      v_2\}}$ into a rooted diamond; 
  \item deleting every vertex not belonging to $V(J)
    \cup V(J') \cup \{v_1, v_2\}$;
  \item contracting $J'$ to a single vertex.
  \end{enumerate}
  The graph $G'$ is then a rooted diamond and a vertex adjacent to
  both endpoints of its root, that is, $G'$ is isomorphic to $\gem$.
\end{proof}

Let us now characterize these 2-connected graphs avoiding rooted diamonds.
\begin{lemma}\label{l:diam-free}
  Let $G$ be a 2-connected graph rooted at $\{u,v\} \in E(G)$. If $\{u,v\}$
  is not a cutset of $G$ and $G$ does not contain a rooted diamond as
  induced minor, then either $G$ is an induced cycle, or both $u$ and $v$ are dominating in $G$.
\end{lemma}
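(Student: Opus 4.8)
The plan is to prove the contrapositive. Assume $\{u,v\}$ is not a cut and recall that $G$ is $2$-connected (the standing hypothesis of this subsection); write $\neigh_G[x]=\neigh_G(x)\cup\{x\}$. It suffices to show that if $G$ is not an induced cycle and at least one of $u,v$ is not dominating, then $G$ contains a rooted diamond as an induced minor, the root $\{u,v\}$ of $G$ being sent onto the root of the diamond (one endpoint onto the degree-$3$ endpoint of the diamond's root, the other onto the degree-$2$ one). By symmetry of the hypotheses in $u$ and $v$, I may assume $u$ is not dominating, and I fix $w\in\vertices(G)\setminus\neigh_G[u]$ (so $w\neq v$, as $v\in\neigh_G(u)$).

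I would first dispose of the case where $v$ is dominating. Since $G$ is $2$-connected, $G-v$ is connected, hence contains a simple path $P=u\,p_1\cdots p_m=w$; as $w\notin\neigh_G(u)$ we have $m\geq 2$, so $p_1$ is adjacent to $u$ and $p_{m-1}$ to $w$. Define a model $\mu$ of the diamond by sending the degree-$3$ endpoint of its root to $\{v\}$, the degree-$2$ endpoint of its root to $\{u\}$, the other degree-$3$ vertex to the subpath $\{p_1,\dots,p_{m-1}\}$, and the other degree-$2$ vertex to $\{w\}$. Every bag is connected; $\{v\}$ is adjacent to all other bags because $v$ dominates; the subpath bag is adjacent to $\{u\}$ (edge $\{p_1,u\}$) and to $\{w\}$ (edge $\{p_{m-1},w\}$); and $\{u\}$ and $\{w\}$ are non-adjacent because $w\notin\neigh_G(u)$ — which is the only non-adjacency the diamond requires. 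As the root $\{u,v\}$ has $v$ in the degree-$3$ root bag and $u$ in the degree-$2$ root bag, this is a rooted-diamond model, contradicting our assumption. Hence from now on neither $u$ nor $v$ dominates.

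It remains to handle the case where neither $u$ nor $v$ dominates and $G$ is not a cycle; this is the heart of the argument. Since $G$ is $2$-connected and not a cycle, it has a vertex of degree at least $3$. I would fix a connected component $D$ of $G-\neigh_G[u]$ (non-empty since $u$ is not dominating); by $2$-connectivity every neighbour of $D$ lies in $\neigh_G(u)$ and there are at least two of them, say $b_1,b_2$, with neighbours $d_1,d_2\in D$. If $v$ has no neighbour in $D$, then $D\cap\neigh_G[v]=\emptyset$; using that $G-\{u,v\}$ is connected one can label $b_1,b_2$ so that $b_2$ is joined to a neighbour of $v$ by a path avoiding $\{u,v,b_1\}$ and a short path $P_D\subseteq D$ between $d_1$ and $d_2$. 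Then $\{u,b_1\}$, that path, $\{v\}$ and $P_D$ serve respectively as a degree-$3$ bag, the other degree-$3$ bag, the degree-$2$ root bag and the last degree-$2$ bag of a rooted diamond, the only required non-adjacency — between $\{v\}$ and $P_D$ — holding because $D$ misses $\neigh_G[v]$. If instead $v$ has a neighbour in $D$, a mirror-image construction puts $u$ on the degree-$2$ side and uses a non-neighbour of $v$ (which exists since $v$ is not dominating) as the disjoint degree-$2$ bag, with $D$ again providing the connecting bag.

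The step I expect to be the main obstacle is this last choice of $D$ and of $b_1,b_2$, to be made so that after deleting the other three bags one still has a path from $b_2$ to a neighbour of $v$. This is exactly where the hypothesis that $\{u,v\}$ is not a cut (equivalently, that $G-\{u,v\}$ is connected) enters in an essential way, and where the hypothesis that $G$ is not a cycle — materialised by a vertex of degree $\geq 3$ — rules out the degenerate configuration, typical of cycles, in which the two neighbours of $u$ are entirely absorbed by the two bags incident to $u$. Once these choices are fixed, verifying that one has a rooted diamond is the same short adjacency check as in the first case.
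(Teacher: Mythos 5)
Your first case ($v$ dominating) is correct and self-contained, but the case you yourself call the heart of the argument (neither $u$ nor $v$ dominating) has genuine gaps, not just unchecked routine details. (i) In the subcase where $v$ has no neighbour in $D$, the four bags must be pairwise disjoint, yet the path you take from $b_2$ to a neighbour of $v$ avoiding $\{u,v,b_1\}$ may pass through $D$ (it is only forced to stay outside $D$ when $\neigh(D)\subseteq\{b_1,b_2\}$; if $D$ has further neighbours in $\neigh_G(u)$ the path can enter and leave $D$), and then it can meet $P_D$, so your degree-3 bag and your degree-2 bag may intersect. This is repairable --- e.g.\ let $z$ be the last vertex of that path lying in $D$, replace $b_2$ by the successor of $z$ on the path, replace $P_D$ by a $d_1$--$z$ path inside $D$, and keep only the tail of the path after $z$ --- but no such argument appears. (ii) The statement you flag as the expected obstacle, that $b_1,b_2$ can be labelled so that $b_2$ reaches $\neigh(v)$ in $G-\{u,v,b_1\}$, is indeed true but is nowhere proved; the proof is short (if every $b_2$--$\neigh(v)$ path in the connected graph $G-\{u,v\}$ went through $b_1$, the subpath of such a path from $b_1$ onwards avoids $b_2$, so the labels can be swapped), and without it the model simply does not exist. (iii) The subcase where $v$ does have a neighbour in $D$ is not a mirror image, and as written the bag assignment is inconsistent: with $u$ in the degree-2 root bag, the non-root degree-2 bag must be \emph{non-adjacent} to $u$'s bag and \emph{adjacent} to $v$'s bag, whereas a non-neighbour of $v$ as a singleton bag gives neither property; moreover the existence of the connecting degree-3 bag (a connected set adjacent to $u$, to $v$ and to that degree-2 bag, disjoint from all three) is exactly as delicate as (i)--(ii) and is not justified.

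For comparison, the paper's proof avoids all of this bag bookkeeping: it takes a shortest cycle $C$ through the root $\{u,v\}$ and a non-neighbour $w$ of $u$; either a chord between the two $u$--$w$ and $v$--$w$ subpaths already yields a rooted diamond, or $C$ is induced; since $G$ is not an induced cycle, a component of $G\setminus C$ can be contracted to a single vertex $x$, for which 2-connectivity gives $|\neigh(x)|\geq 2$ and the non-cut hypothesis gives $\neigh(x)\neq\{u,v\}$; contracting $C$ down to a rooted $C_4$ while keeping these two properties, a two-line check on where $x$ attaches produces the rooted diamond. Your route can probably be completed, but as it stands items (i)--(iii) are missing, and you should either supply them or switch to an argument of the above type.
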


\begin{proof}
  Assuming that $u$ is not dominating and $G$ is not an induced
  cycle, let us prove that $G$ contains a rooted diamond as
  induced minor.
  Let $w \in V(G)$ be a vertex such that $\{u, w\} \not
  \in E(G)$. Such a vertex always exists given that $u$ is
  not dominating. Let~$C$ be a shortest cycle using the edge
  $\{u,v\}$ and the vertex $w$ (which exists since $G$ is
  2-connected), let $P_u$ be the subpath of $C$ linking $u$ to
  $w$ without meeting $v$ and similarly let $P_v$ be the subpath
  of $C$ linking $v$ to $w$ without meeting $u$.
  By the choice of $C$, both $P_u$ and $P_v$ are induced
  paths. Notice that if there is an edge other than $\{u,v\}$ connecting a vertex of
  $P_u \setminus \{w\}$ to vertex of $P_v \setminus \{w\}$, then
  $G$ contains a rooted diamond as induced minor. Therefore we can now assume
  that $C$ is an induced~cycle.
  
  Since we initially assumed that $G$ is not an induced
  cycle, $G$ contains a vertex not belonging to $C$.
  Let $G'$ be the graph obtained from $G$ by contracting to one
  vertex $x$ some connected component of $G \setminus C$ and deleting all the
  other components. Obviously we have~$G' \linm G$. Let us show
  that $G'$ contains a rooted diamond as induced minor.
  
  Observe that $G'$ is a $k$-wheel of center $x$, for some $k\geq 2$. Furthermore, the neighborhood of $x$, which has size at least
  two (as $G$ is 2-connected), is not equal to $\{u,v\}$,
  otherwise $\{u,v\}$ would be a cutset in~$G$.
  It is then not hard to see that $G'$ contains a rooted diamond as induced minor, a contradiction.
\end{proof}
      
\begin{remark}
  \label{r:domcogr}
        In a $\gem$-induced minor-free graph $G$, every induced
        subgraph $H$ dominated by a vertex $v \in V(G)
        \setminus V(H)$ is a cograph.
\end{remark}
Indeed, assuming that $H$ is not a cograph, let $P$ be a path on four
vertices which is induced subgraph of $H$. Then $\induced{G}{V(P) \cup
\{v\}}$ is isomorphic to $\gem$, a contradiction.

Recall that we say that an induced subgraph of $G$ is \emph{basic in $G$}
if it is either a cograph, or an induced path whose internal vertices
are of degree two in~$G$.
\begin{lemma} \label{l:k2-cut}
  If $G$ has a $K_{2}$-cutset $S = \{ v_1, v_2 \}$, then every connected
  component of $G \setminus S$ is basic in $G$.
\end{lemma}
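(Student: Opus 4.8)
The plan is to feed every component of $G\setminus S$ into the two structural lemmas on $K_2$-cuts that were just proved. Fix a $K_2$-cut $S=\{v_1,v_2\}$ and a connected component $C$ of $G\setminus S$, and write $H_C := \induced{G}{\vertices(C)\cup\{v_1,v_2\}}$, rooted at the edge $\{v_1,v_2\}$ (this is an edge since $\induced{G}{S}\cong K_2$). We must show that $C$ is basic in~$G$.

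The first step is to record two easy facts about $H_C$: that $\{v_1,v_2\}$ is not a cut of $H_C$, and that $H_C$ is $2$-connected. The former is immediate because $H_C\setminus\{v_1,v_2\}=C$ is connected. For the latter, one first notes that, since $S$ is a cut of the $2$-connected graph $G$, we have $\neigh_G(C)=\{v_1,v_2\}$ (otherwise $v_1$ alone, or $v_2$ alone, would separate $C$ from another component), so $H_C$ is connected; then, for any $w\in\vertices(H_C)$, a path of $G\setminus\{w\}$ joining two vertices of $\vertices(C)\cup S$ that wanders outside $\vertices(C)\cup S$ must leave through $v_1$ and re-enter through $v_2$, hence can be rerouted along the edge $v_1v_2$, showing $H_C\setminus\{w\}$ is connected. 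I expect this small piece of connectivity bookkeeping to be the only delicate point; everything that follows is a direct application of the earlier results.

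The second step invokes \autoref{l:rdtogem}: applied to $S$ and $C$ (there is a second component $C'$ since $S$ is a cut), and using $G\in\excl(\gem)$, it gives that $H_C$ does \emph{not} contain a rooted diamond as induced minor. Then \autoref{l:diam-free}, applied to $H_C$ rooted at $\{v_1,v_2\}$, leaves exactly two possibilities: either $H_C$ is an induced cycle, or both $v_1$ and $v_2$ are dominating in $H_C$.

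The third step is the case analysis. If $v_1$ is dominating in $H_C$, then $v_1\in\vertices(G)\setminus\vertices(C)$ dominates the induced subgraph $C$, so by \autoref{r:domcogr} the graph $C$ is a cograph, hence basic. If $H_C$ is an induced cycle, then since $\{v_1,v_2\}$ is an edge it must be an edge of that cycle (an induced cycle has no chord), so $C=H_C\setminus\{v_1,v_2\}$ is an induced path; moreover an internal vertex $x$ of this path has its two path-neighbours in $C$ and, being interior on the induced cycle $H_C$, is not adjacent to $v_1$ or $v_2$, and since $\neigh_G(x)\subseteq\vertices(C)\cup S$ this forces $\dg_G(x)=2$. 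Thus $C$ is an induced path whose internal vertices have degree two in $G$, i.e.\ basic. In both cases $C$ is basic in $G$, which is what we wanted.
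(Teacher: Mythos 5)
Your proof is correct and follows essentially the same route as the paper's: apply \autoref{l:rdtogem} to rule out a rooted diamond in $\induced{G}{\vertices(C)\cup S}$, then \autoref{l:diam-free} to split into the induced-cycle case (giving a path with internal degree two) and the dominating-vertex case (giving a cograph via \autoref{r:domcogr}). The only difference is that you explicitly verify details the paper leaves implicit (the $2$-connectivity of $\induced{G}{\vertices(C)\cup S}$, which the proof of \autoref{l:diam-free} silently uses, and the degree-two check), which is a welcome but non-essential addition.
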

\begin{proof}
  By \autoref{l:rdtogem}, for every connected component $J$ of
  $G\setminus S$ we know that the graph $\induced{G}{V(J) \cup
    S}$ rooted at $\{u,v\}$ contains no rooted diamond. By the virtue
  of \autoref{l:diam-free}, this graph either is an induced cycle,
  or has a dominating vertex among $u$ and $v$. 
        In the first case, $J$ is a path whose all internal vertices
        are of degree two in $G$, hence $H$ is basic. If one of $u$
        and $v$ is dominating, then $J$ is a cograph according to
        \autoref{r:domcogr}. Therefore in both cases $C$ is basic
        in~$G$.
\end{proof}

Let us now focus on 2-connected graphs with a $\overline{K_{2}}$-cutset, which is
the last case in our characterization theorem.  

\begin{corollary}
	\label{c:2k1cutcc}
	If $G$ has a $\overline{K_{2}}$-cutset $S$ such that $G \setminus
        S$ contains more than two connected components, then every
        connected component of $G \setminus S$ is basic in~$G$. 
\end{corollary}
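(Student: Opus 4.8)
The plan is to deduce the statement from \autoref{l:k2-cut} by passing to a well-chosen contraction of $G$ in which the $\overline{K_2}$-cut $S$ has been turned into a $K_2$-cut. Write $S = \{v_1, v_2\}$, so $\{v_1,v_2\} \notin \edges(G)$, and fix an arbitrary connected component $C$ of $G \setminus S$; by symmetry it is enough to prove that $C$ is basic in $G$. Let $D_1, \dots, D_t$ be the connected components of $G \setminus S$ other than $C$; since $G \setminus S$ has more than two components, $t \geq 2$. I would first record the following standard consequence of $2$-connectivity: each of $C, D_1, \dots, D_t$ has a neighbour equal to $v_1$ and a neighbour equal to $v_2$ — otherwise $v_1$ or $v_2$ would be a cutvertex of $G$ separating that component from another one.

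Now I would form the contraction $G_1$ of $G$ obtained by contracting each $D_i$ to a single vertex $d_i$ (for $1 \leq i \leq t$, which is legitimate since each $D_i$ is connected), and then $G_2 = G_1 / \{v_1, d_1\}$, contracting the edge $\{v_1,d_1\}$ (which exists by the previous remark) to a vertex $u$. Both $G_1$ and $G_2$ are contractions of $G$, hence $\gem$-induced minor-free by transitivity of $\linm$. The crucial point is that $G_1$ and $G_2$ are still $2$-connected. For $G_1$ this is seen by checking that $G_1 \setminus z$ is connected for every vertex $z$: if $z \in \{v_1,v_2\}$ one uses that $C$ and all the $d_i$ attach to the other vertex of $S$; if $z \in \vertices(C)$ one uses that $G \setminus z$ is connected and that contracting connected subgraphs keeps a graph connected; and if $z = d_i$ one uses that $G \setminus \vertices(D_i)$ is connected for the same reasons. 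For $G_2$ it then follows because $G_2 \setminus u = G_1 \setminus \{v_1,d_1\}$ is connected (everything attaches to $v_2$) and, for $z \neq u$, $G_2 \setminus z = (G_1 \setminus z)/\{v_1,d_1\}$ is connected since $G_1 \setminus z$ is.

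In $G_2$ the pair $\{u, v_2\}$ induces a $K_2$ (as $d_1$ was adjacent to $v_2$, so is $u$), and it is a cut: the components of $G_2 \setminus \{u,v_2\}$ are exactly $C$ and the singletons $\{d_2\}, \dots, \{d_t\}$, of which there are at least two because $t \geq 2$. This is exactly where the hypothesis of more than two components is used. Applying \autoref{l:k2-cut} to $G_2$ and its $K_2$-cut $\{u,v_2\}$ yields that $C$ is basic in $G_2$. Finally I would transfer this back to $G$: none of the operations producing $G_2$ touched $\vertices(C)$, so $G_2[\vertices(C)] = G[\vertices(C)] = C$; hence if $C$ is a cograph it is basic in $G$, and if $C$ is an induced path of $G_2$ it is an induced path of $G$. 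In the latter case an internal vertex $x$ of this path satisfies $\deg_{G_2}(x) = 2$, so its only $G_2$-neighbours are the two path-neighbours; since in $G$ the only possible neighbours of $x$ outside $\vertices(C)$ are $v_1$ and $v_2$, and such an edge would persist into $G_2$ (as $x$ adjacent to $u$ or to $v_2$), $x$ has no neighbour in $S$, whence $\deg_G(x) = 2$. Thus $C$ is basic in $G$ in all cases.

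The step I expect to be most delicate is the $2$-connectivity bookkeeping: a single edge contraction need not preserve $2$-connectivity (contracting an edge contained in a $2$-cut creates a cutvertex), so one has to argue that the particular contractions used here are harmless — which is precisely the reason for first collapsing each $D_i$ to a single vertex before contracting the edge $\{v_1,d_1\}$.
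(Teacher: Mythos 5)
Your proposal is correct and follows essentially the same route as the paper: sacrifice one of the surplus components by contracting it onto $S$ (possible precisely because more than two components remain, so $S$ stays a cut and becomes a $K_2$-cut), apply \autoref{l:k2-cut} to the contracted graph, and transfer basicness back to $G$ since the component under consideration and its attachments to $S$ are untouched. Your additional bookkeeping (2-connectivity of the contracted graph and the degree-two transfer for internal path vertices) only spells out steps the paper leaves implicit.
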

\begin{proof}
	It follows directly from \autoref{l:k2-cut}. Indeed, if the
        connected components of $G \setminus S$ are $J_1, J_2, \ldots
        J_k$, let us contract $J_1$ to an edge between the two
        vertices of $S$. The obtained graph fulfills the assumptions
        of \autoref{l:k2-cut}: $S$ is a $K_2$-cutset. Therefore each of
        the components $J_2, \dots, J_k$ is basic in $G$. Applying the
        same argument with $J_2$ instead of $J_1$ yields that $J_1$ is
        basic in $G$ as well.
\end{proof}

\begin{lemma}\label{l:mincycl}
	Let $S = \{u,v\}$ be a $\overline{K_{2}}$-cutset, such that and
        $G \setminus S$ has only two connected components $J_1$ and
        $J_2$. Then $G$ contains a cycle $C$ as induced subgraph such
        that every connected component of $G \setminus C$ is basic in~$G$.
\end{lemma}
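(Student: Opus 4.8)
The plan is to build $C$ out of two induced $u$–$v$ paths, one running through each side of the cut. For $j\in\{1,2\}$ put $G_j:=\induced{G}{\vertices(H_j)\cup S}$. Since $G$ is 2-connected, each of $u,v$ has a neighbour in each $H_j$, each $H_j$ is connected and non-empty, and $S$ separates $H_1$ from $H_2$ (so there are no edges between $\vertices(H_1)$ and $\vertices(H_2)$). Applying \autoref{l:rdtogem} with $(C,C')=(H_j,H_{3-j})$ and using $\gem\not\linm G$, I first record that for each $j$ the rooted graph $(G_j,\{u,v\})$ has no rooted diamond as induced minor; the fact that the root $\{u,v\}$ is a non-edge is harmless here, because the construction in the proof of \autoref{l:rdtogem} only uses that $u$ and $v$ each have a neighbour in $C$ and in $C'$, which 2-connectivity guarantees.

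The core step is a structural claim in the spirit of \autoref{l:diam-free}, but adapted to a non-adjacent root: \emph{if $(G_j,\{u,v\})$ has no rooted diamond as induced minor, then there is an induced $u$–$v$ path $P_j$ in $G_j$ such that every connected component of $\induced{G}{\vertices(H_j)\setminus\vertices(P_j)}$ is basic in $G$.} I would prove this following the minimal-counterexample scheme of \autoref{l:diam-free}: let $Q$ be a shortest induced $u$–$v$ path through $H_j$ (it has length at least $2$ since $u\not\sim v$, so at least one internal vertex, all of them in $H_j$). If $\vertices(Q)=\vertices(G_j)$ then $G_j=Q$ is itself an induced $u$–$v$ path and we are done with $P_j:=Q$ and empty residue. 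Otherwise, contract every component of $\induced{G}{\vertices(H_j)\setminus\vertices(Q)}$ to a single vertex and analyse, just as in \autoref{l:diam-free}, how such a vertex can attach to $Q$ and how chords/branch vertices of the residue interact with $Q$, $u$ and $v$: whenever a residual component fails to be basic (it contains an induced $P_4$ and is not an induced path with degree-two internal vertices) one exhibits four pairwise disjoint connected sets realising a rooted diamond with $u$ in the branch set of the degree-$3$ root endpoint and $v$ in that of the degree-$2$ endpoint, contradicting the hypothesis. The two "good" outcomes are: $G_j$ is an induced $u$–$v$ path, or there is a vertex $m_j\in\neigh_{H_j}(u)\cap\neigh_{H_j}(v)$ (more generally a short induced $u$–$v$ path through $H_j$) whose removal leaves only cographs — here one invokes \autoref{r:domcogr} on the external vertex $u$ — and/or paths with internal degree two. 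Note that for an induced subgraph of $G_j$ avoiding $u$ and $v$, being basic in $G_j$ and being basic in $G$ coincide, since every vertex of $H_j$ has the same neighbourhood in $G$ as in $G_j$.

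Granting the claim, set $C:=P_1\cup P_2$. As $P_1$ and $P_2$ are induced $u$–$v$ paths with internal vertices in $H_1$ and $H_2$ respectively, they meet only in $u$ and $v$ and each has an internal vertex, so $C$ is a cycle. It is induced: a chord of $C$ would be a chord of $P_1$ or of $P_2$ (impossible, each is induced in $G$) or an edge between the interiors of $P_1$ and $P_2$ (impossible, they lie in different components of $G\setminus S$), while $\{u,v\}$ is itself a non-edge. Finally $\vertices(G)\setminus\vertices(C)=(\vertices(H_1)\setminus\vertices(P_1))\cup(\vertices(H_2)\setminus\vertices(P_2))$ since $u,v\in\vertices(C)$, and because $H_1$ and $H_2$ are non-adjacent every component of $G\setminus C$ lies inside one of these two sets; by the claim each such component is basic in $G$, as required.

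The main obstacle is the structural claim, i.e.\ converting "$(G_j,\{u,v\})$ has no rooted diamond" into a decomposition of $G_j$ by an induced $u$–$v$ path with basic residue. This is exactly where the adaptation of \autoref{l:diam-free} to a non-adjacent root needs care — replacing "shortest cycle through the edge $uv$" by "shortest induced $u$–$v$ path", dealing with the case where that path already spans $G_j$, and verifying that every way a residual component can fail to be basic produces a rooted diamond (the argument here is close to, and should borrow from, the proofs of \autoref{l:diam-free} and \autoref{l:k2-cut}).
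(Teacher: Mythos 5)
Your first and last steps coincide with the paper's: it likewise takes a shortest (hence induced) $u$--$v$ path $Q_i$ through each $H_i$, observes that $\induced{G}{\vertices(Q_1)\cup\vertices(Q_2)}$ is an induced cycle, and the transfer observations you make (no edges between the two sides; ``basic in $G_j$'' versus ``basic in $G$'' for components avoiding $u,v$) are fine. The problem is the middle: everything rests on your ``core structural claim'' (no rooted diamond at the non-adjacent root $\{u,v\}$ implies an induced $u$--$v$ path with basic residue), and you do not prove it --- you only say one would ``analyse, just as in \autoref{l:diam-free}'', and you yourself flag it as the main obstacle. This is not a routine adaptation: \autoref{l:diam-free} is stated and proved for a root that is an edge, its dichotomy (induced cycle, or both root endpoints dominating) has no direct analogue when $\{u,v\}\notin\edges(G_j)$ (for instance $G_j$ may be a union of several internally disjoint induced $u$--$v$ paths, which is neither a single induced path nor has a dominating root endpoint, yet satisfies your claim), and a ``rooted diamond at a non-edge'' is not even covered by the paper's definition of a rooted graph, so the whole case analysis would have to be redone from scratch. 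As written, the proposal asserts rather than establishes the heart of the lemma; the preliminary extension of \autoref{l:rdtogem} to a non-edge root is the easy part and is indeed harmless.

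The paper closes exactly this hole by a reduction instead of a new structure lemma: supposing some component $J\subseteq H_1$ of $G\setminus C$ is not basic, it takes two neighbours $x,y$ of $J$ on $C$ (they exist by 2-connectivity and lie on $Q_1$), contracts $Q_1$ to a single edge between $u$ and $v$ in such a way that $x$ and $y$ land on different endpoints, and notes that the resulting graph $G'$ is still 2-connected and $\gem$-induced minor-free with $\{u,v\}$ now a $K_2$-cut; \autoref{l:k2-cut} then yields that $J$ is basic in $G'$, and since an internal vertex of $J$ having a neighbour on $Q_1$ in $G$ retains a neighbour in $\{u,v\}$ after the contraction, basicness in $G'$ transfers back to $G$. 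Your claim is in fact true --- it can be recovered by this same contraction trick --- but to complete your argument you must either carry out that reduction to \autoref{l:k2-cut} or genuinely rework the analysis of \autoref{l:diam-free} and \autoref{l:k2-cut} for a non-adjacent root; neither is currently in the proposal.
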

\begin{proof}
  For every $i \in \{1,2\}$, let $Q_i$ be a shortest path linking $u$
  to $v$ in $\induced{G}{V(J_i) \cup \{u,v\}}$. Notice that
  the cycle $C = \induced{G}{V(Q_1) \cup V(Q_2)}$ is
  then an induced cycle.
  For contradiction, let us assume that some connected component $J$ of
  $\induced{G}{V \setminus C}$ is not basic in~$G$. By symmetry, we can
  assume that $J \subset J_1$. 
 
  Notice that since $G$ is 2-connected, $J$ has at least two
  distinct neighbors $x,y$ on~$C$.
  Let $G'$ be the graph obtained from $G$ by contracting $Q_1$ to an
  edge between $u$ and $v$ in a way such that $x$ is not contracted to
  $y$ (that is, $x$ is contracted to one of $u,v$ and $y$ to the other
  one). In $G'$, $\{u,v\}$ is a $K_2$-cutset, therefore by \autoref{l:k2-cut}, every connected component of $G\setminus S$ is
  basic in $G'$. As this consequence holds for every choice of $J$ and
  $G'$ is an induced minor of $G$, we eventually get that every
  connected component of $G \setminus C$ is basic in~$G$.
\end{proof}

In the sequel, $S, u,v, C, H_1$, and $H_2$ follow the definitions of the
statement of \autoref{l:mincycl}.

\begin{remark}
	\label{r:at-most-3-neighbors}
	Every connected component $J$ of $G \setminus C$ has at least two and at most three neighbors on $C$.
\end{remark}
Indeed, it has at least two neighbors on $C$ because $G$ is
2-connected. Besides if $J$ has at least four neighbors on $C$, then 
contracting in $\induced{G}{V(C) \cup V(J)}$ the
component $J$ to a single vertex, deleting a vertex of $C$ not
belonging to $N(J)$ (which exists since $J$ belongs to only one
of the components of $G \setminus S$) and then contracting every edge
incident with a vertex of degree two would yield~$\gem$.

\begin{lemma}
	\label{l:neighborhood-is-equal}
        If $C$ has at least one vertex of degree two, then for every distinct connected components $J_1$ and $J_2$ of $G \setminus C$ we have $N_C(J_1) \subseteq N_C(J_2)$ or $N_C(J_2) \subseteq N_C(J_1)$.
\end{lemma}
\begin{proof}
  Let us assume, for contradiction, that the claim is not true and let $G$ be
  a minimal counterexample with respect to induced minors. In such a
  case both $J_1$ and $J_2$ are single vertices (say $j_1$ and $j_2$
  respectively) and they are the only connected components of~$G
  \setminus C$. We now argue that any such minimal
  counterexample must contain as induced minor one of graphs presented on
  \autoref{fig:similar-neighborhood} (where thick edges represent
  the cycle~$C$). This would conclude the
  proof as each of these graphs contains $\gem$ as induced minor, as
  shown in \autoref{fig:similar-neighborhood}.

  First of all, in such a minimal counterexample there is only one
  vertex in $C$ of degree $2$, let us call it~$c$. We will consider all
  the ways that the vertices $j_1$ and $j_2$ can be connected to the
  neighbors of $c$, and show that in every such case we can contract
  our graph to one of the graphs on
  \autoref{fig:similar-neighborhood}. According to
  \autoref{r:at-most-3-neighbors}, each of $j_1$ and $j_2$ will have
  either two or three neighbors on $C$.

    \smallskip
  \noindent \textit{First case:} both $j_1$ and $j_2$ are connected with
    both neighbors of $c$. As $N(j_1) \not \subseteq
  N(j_2)$ and $N(j_2) \not \subseteq N(j_1)$, each of
  $j_1,j_2$ has a neighbor which is not adjacent to the other. But
  since $j_1$ and $j_2$ can have at most three neighbors, the
  neighborhood of $j_1$ and $j_2$ is now completely characterized.
  \autoref{fig:similar-neighborhood}.(a) presents
  the only possible graph for this case.

  \smallskip
  \noindent \textit{Second case:} $j_1$ is connected with exactly one of
    neighbors of $c$ and $j_2$ is connected with the other one. In
  this case, as each of $j_1,j_2$ has at least two neighbors on $C$,
  contracting all the edges of $C$ whose both endpoints are at distance at
  least two from $c$ gives the graph depicted in
  \autoref{fig:similar-neighborhood}.(b).

  \smallskip
  \noindent \textit{Third case:} $j_1$ is connected with both neighbors of
    $c$, and $j_2$ is connected with at most one of them.
  In this case, as long as $C$ has more than $4$ edges, we can contract an edge of $C$
  to find a smaller counterexample. Precisely, if it has more than $4$ edges, there
  are two edges $e_1, e_2$ in $C$ within distance exactly one to $c$ and those
  two do not share an endpoint. Moreover $j_2$ has a neighbor $s$ in 
  $C \setminus N(c)$, which is not a neighbor of $j_1$. Now
  one of the edges $e_1, e_2$ is not incident to $s$, and contracting this edge yields
  a smaller counterexample. Therefore, we only have to care about the case where $C$ has exactly $4$ edges,
  and this case is exactly the graph represented on \autoref{fig:similar-neighborhood}.(c).

  In each of the induced minor-minimal counterexamples, a $\gem$ can be found as induced minor, as depicted in \autoref{fig:similar-neighborhood}.
  This concludes the proof.
\end{proof}
\begin{figure}[ht]
\centering
	\begin{tikzpicture}[every node/.style = black node, scale = 1.25]
          \begin{scope}
            \begin{scope}[color = blue!25, every node/.style = {normal, minimum size = 0, inner sep=3pt}]
              \fill (-1,0) circle (0.15cm);
              \fill (0,1) circle (0.15cm);
              \fill (0,-0.333) circle (0.15cm);
              \fill (1,-0.5) circle (0.15cm);
              \fill (0, 0.333) circle (0.15cm);
              \fill (1,0.5) circle (0.15cm);
              \fill (0,-1) circle (0.15cm);
              \draw[color = blue!25, line width =0.25cm] (0, 0.333) -- (1,0.5) (1,0.5) -- (0,-1);
            \end{scope}
			\node[white node] (c) at (-1,0) [label=left:2] {};
			\node[label=90:3] (1) at (0, 1) {};
			\node (2) at (0, 0.333) {};
			\node[label=135:5] (3) at (0,-0.333) {};
			\node[label = -90:1] (4) at (0, -1) {};
			\node (x1) at (1, 0.5) [label=right:$j_1$] {};
			\node[label=right:$j_2$, label=-90:4] (x2) at (1, -0.5) {};
			\draw [very thick] (c) -- (1) -- (2) -- (3) -- (4) -- (c);
			\draw (1) -- (x1) -- (2);
			\draw (x1) -- (4);
			\draw (3) -- (x2);
			\draw (x2) -- (4);
			\draw (x2) -- (1);
                        \draw (0, -1.75) node[normal] {(a)};
		\end{scope}
		\begin{scope}[xshift=100pt]
            \begin{scope}[color = blue!25, every node/.style = {normal, minimum size = 0, inner sep=3pt}]
              \fill (-1,0) circle (0.15cm);
              \fill (0,1) circle (0.15cm);
              \fill (0,0) circle (0.15cm);
              \fill (1,-0.5) circle (0.15cm);
              \fill (1,0.5) circle (0.15cm);
              \fill (0,-1) circle (0.15cm);
              \draw[color = blue!25, line width =0.25cm] (-1,0) -- (0,1);
            \end{scope}
			\node[white node] (c) at (-1,0) {};
			\node[label=90:3] (1) at (0, 1) {};
			\node[label=180:1] (2) at (0, 0) {};
			\node[label=-90:4] (3) at (0, -1) {};
			\node[label=90:2] (x1) at (1, 0.5) [label=right:$j_1$] {};
			\node[label=-90:5] (x2) at (1, -0.5) [label=right:$j_2$] {};

			\draw [very thick] (c) -- (1) -- (2) -- (3) -- (c);
			\draw (1) -- (x1) -- (2);
			\draw (2) -- (x2) -- (3);
                        \draw (0, -1.75) node[normal] {(b)};
		\end{scope}
		\begin{scope}[xshift=200pt]
                  \begin{scope}[color = blue!25, every node/.style = {normal, minimum size = 0, inner sep=3pt}]
                    \fill (-1,0) circle (0.15cm);
                    \fill (0,1) circle (0.15cm);
                    \fill (0,0) circle (0.15cm);
                    \fill (1,-0.5) circle (0.15cm);
                    \fill (1,0.5) circle (0.15cm);
                    \fill (0,-1) circle (0.15cm);
                    \draw[color = blue!25, line width =0.25cm] (-1,0) -- (0,1);
                  \end{scope}
			\node[white node] (c) at (-1,0) {};
			\node[label=90:1] (1) at (0, 1) {};
			\node[label=180:4] (2) at (0, 0) {};
			\node[label=-90:3] (3) at (0, -1) {};
			\node[label=-90:5] (x1) at (1, -0.5) [label=right:$j_2$] {};
			\node[label=90:2] (x2) at (1, 0.5) [label=right:$j_1$] {};

			\draw [very thick] (c) -- (1) -- (2) -- (3) -- (c);
			\draw (3) -- (x1) -- (2);
			\draw (3) -- (x2);
			\draw (x2) -- (1);
                        \draw (0, -1.75) node[normal] {(c)};
		\end{scope}
	\end{tikzpicture}
	\caption{Induced minor-minimal counterexamples in the proof \autoref{l:neighborhood-is-equal} contain the $\gem$ as induced minor. The vertex $c$ is depicted in white. The numbers indicate which vertices of the $\gem$ (following the convention of \autoref{fig:h123}) correspond to the subsets of vertices depicted in blue.}
	\label{fig:similar-neighborhood}
      \end{figure}
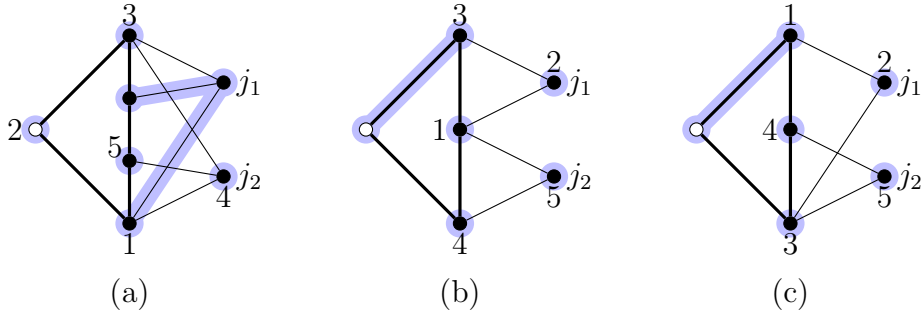

\begin{corollary}\label{c:dggt}
	If $C$ has at least one vertex of degree two, then it has at most three vertices of degree greater than two.
\end{corollary}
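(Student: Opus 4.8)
The plan is to recognise the vertices of $C$ having degree greater than two as precisely the vertices of $C$ that have a neighbour outside $C$, and then to bound the number of such vertices using \autoref{l:neighbourhood-is-equal} and \autoref{r:at-most-3-neighbours}.

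First I would note that, as $C$ is an induced cycle, every vertex $c \in \vertices(C)$ has exactly two neighbours lying on $C$; hence $\dg_G(c) > 2$ holds if and only if $c$ has a neighbour in $\vertices(G) \setminus \vertices(C)$, that is, if and only if $c \in \neigh_C(J)$ for some connected component $J$ of $G \setminus C$. Writing $D$ for the set of vertices of $C$ of degree greater than two, we therefore have $D = \bigcup_J \neigh_C(J)$, where the union ranges over all connected components $J$ of $G \setminus C$. If $G = C$ then $D = \emptyset$ and there is nothing to prove, so we may assume $G \neq C$, in which case this family of components is nonempty.

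Now I would invoke the hypothesis that $C$ has a vertex of degree two: by \autoref{l:neighbourhood-is-equal}, the sets $\neigh_C(J)$, as $J$ ranges over the components of $G \setminus C$, are pairwise comparable under inclusion, i.e.\ they form a chain. Since $G$ is finite, there are only finitely many such components, so this chain is finite and nonempty, and thus admits a largest element $\neigh_C(J^\star)$. Consequently $D = \bigcup_J \neigh_C(J) = \neigh_C(J^\star)$, and by \autoref{r:at-most-3-neighbours} we have $\card{\neigh_C(J^\star)} \le 3$. Hence $\card{D} \le 3$, which is exactly the statement.

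I do not expect any genuine obstacle here: the corollary is essentially a bookkeeping consequence of the two preceding results. The only step deserving a moment's attention is the passage from the union over all components to a single maximal neighbourhood, which relies on finiteness of $G$ together with the totally-ordered-by-inclusion property supplied by \autoref{l:neighbourhood-is-equal}; once this is in place, the bound of three follows immediately from \autoref{r:at-most-3-neighbours}.
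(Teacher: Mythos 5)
Your proof is correct and follows essentially the same route as the paper: identify the degree-greater-than-two vertices of $C$ with the union of the sets $\neigh_C(J)$, observe via \autoref{l:neighbourhood-is-equal} that these sets form a chain under inclusion, and bound the largest one by three using \autoref{r:at-most-3-neighbours}. The extra detail you add (finiteness giving a maximal element of the chain) is just a spelled-out version of the paper's one-line conclusion.
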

\begin{proof}
  Notice that the set of vertices of $C$ that have degree greater then two is
  exactly the union of $N_C(J)$ over all connected components $J$
  of $G \setminus C$. We just saw in
  \autoref{l:neighborhood-is-equal} that for every two connected
  components of $G \setminus C$, the neighborhood on
  $C$ of one is contained in the neighborhood on $C$ of the other. Besides
  these neighborhoods have size at most three, otherwise we would be able to find a $\gem$ as induced minor. Therefore their union
  have size at most three as~well.
\end{proof}

\begin{corollary}
	\label{k2bar-finalcut}
	Every connected component of $G\setminus C$ is basic and $C$
        has at most six vertices of degree greater than two.
\end{corollary}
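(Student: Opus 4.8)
The statement has two parts, and the first one is essentially free: that every connected component of $G\setminus C$ is basic in $G$ is precisely the conclusion of \autoref{l:mincycl}, so I would simply cite it. The content is in bounding the number of vertices of $C$ of degree greater than two, and the idea is to split these vertices according to which side of the cut $S=\{u,v\}$ they reach and to bound each side by three using \autoref{c:dggt}.

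I would start from the elementary remark that, $C$ being an induced cycle, every vertex of $C$ has exactly two neighbours on $C$; hence a vertex of $C$ has degree greater than two in $G$ iff it has a neighbour outside $C$, that is, in some connected component of $G\setminus C$. Since $C\supseteq S$, each such component lies entirely in $H_1$ or entirely in $H_2$, and a component inside $H_i$ is adjacent only to the portion of $C$ contained in $\vertices(H_i)\cup\{u,v\}$, \ie{} to the path $Q_i$ from \autoref{l:mincycl}. So it suffices to show that at most three vertices of $Q_1$ have a neighbour in a component of $G\setminus C$ contained in $H_1$, and then to invoke the symmetric statement for $H_2$ and add.

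For that bound I would reduce to a graph where \autoref{c:dggt} applies directly. Contract the whole of $H_2$ --- which is connected and contains the interior of $Q_2$ together with every component of $G\setminus C$ lying in $H_2$ --- to a single vertex $w$, and call the result $G'$. Then $G'$ is a contraction of $G$, hence still $\gem$-induced minor-free; and $G'$ is $2$-connected (here one uses that, $G$ being $2$-connected, $u$ and $v$ each have a neighbour in $H_1$, so $G'\setminus w$ is connected, while for $x\ne w$ the graph $G'\setminus x$ is a contraction of the connected graph $G\setminus x$). In $G'$ the only neighbours of $w$ are $u$ and $v$, so $\{u,v\}$ is again a $\overline{K_2}$-cut of $G'$ with exactly the two components $H_1$ and $\{w\}$, and the cycle furnished by \autoref{l:mincycl} for $G'$ is $C'=Q_1+w$ (take $Q_1$ as a shortest $u$--$v$ path on one side and $u\,w\,v$ on the other), which is induced since $Q_1$ is and $u\not\sim v$. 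The point is that $w$ now has degree two in $G'$, so \autoref{c:dggt} applies to $(G',C')$ and yields that at most three vertices of $C'$ have degree greater than two in $G'$. The vertex $w$ is not among them, so these are vertices of $Q_1$; and a vertex of $Q_1$ has degree greater than two in $G'$ exactly when it has a neighbour in a component of $G\setminus C$ inside $H_1$, because on the $H_1$-side nothing changed while the collapsed $H_2$-side contributes only $w$, which meets $Q_1$ at $u$ and $v$ --- already counted among the two cycle-neighbours. Hence at most three vertices of $C$ reach the $H_1$-side; contracting $H_1$ instead gives at most three for the $H_2$-side; summing bounds the vertices of $C$ of degree greater than two by six.

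The one place requiring care --- and the likeliest source of a slip --- is the reduction: one must verify that $G'$ genuinely fits the standing hypotheses under which \autoref{c:dggt} was established ($2$-connected, $\gem$-induced minor-free, $\{u,v\}$ a $\overline{K_2}$-cut with exactly two components, and $C'$ the cycle produced by \autoref{l:mincycl}), and that collapsing $H_2$ neither creates the edge $uv$ nor alters which vertices of $Q_1$ have an external neighbour. Each of these checks is short, and with them in place the corollary follows immediately from \autoref{l:mincycl} and \autoref{c:dggt}.
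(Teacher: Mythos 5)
Your proposal is correct and follows essentially the same route as the paper: contract one of the two sides $H_i$ to a single vertex so that the resulting cycle has a vertex of degree two, apply \autoref{c:dggt} to get at most three high-degree vertices on the other side, and sum over the two symmetric contractions, with the basicness of the components coming directly from \autoref{l:mincycl}. The only difference is cosmetic (you contract $H_2$ where the paper contracts $H_1$) and that you verify the standing hypotheses of \autoref{c:dggt} for the contracted graph more explicitly than the paper does.
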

\begin{proof}
 Notice that contracting $J_1$ to a single vertex $h$ in $G$ gives a
 graph $G'$ and a cycle $C'$ (contraction of $C$) such that every
 connected component of $G'\setminus C'$ is basic and $C'$ has at
 least one vertex of degree 2,~$h$. By \autoref{c:dggt}, $C'$
 has at most three vertices of degree greater than two. Notice that
 these vertices belong to $G' \setminus h$ which is isomorphic to $G
 \setminus J_1$. Hence $G \setminus J_1$ has at most three vertices of
 degree greater than two. Applying the same argument with $J_2$
 instead of $J_1$ we get the desired~result.
\end{proof}

Now we are ready to prove the main decomposition theorem for $\gem$-induced minor-free graphs.

\begin{proof}[Proof of \autoref{t:decgem}]
  Recall that we are looking for a subset $X$ of $V(G)$ of
  size at most 6 such that each component of $G \setminus X$ is basic
  in~$G$.

  If $G$ is 3-connected, by \autoref{l:pono} it is either a
  cograph, or has a subset $X$ of four vertices such that every
  connected component of $G \setminus X$ is a cograph. Let us now
  assume that $G$ is not 3-connected.

  In the case where $G$ has a $K_2$-cutset $S$, or if $G$ has a
  $\overline{K_2}$-cutset $S$ such that $G \setminus S$ has more than two connected
  components, then according to \autoref{l:k2-cut} and
  \autoref{c:2k1cutcc} respectively, $S$ satisfies the required properties.
  In the remaining case, by \autoref{k2bar-finalcut} $G$ has a
  cycle $C$ such that every connected component of $G \setminus C$ is
  basic in $G$ and which has at most six vertices of degree more than
  two in~$G$. Let $X$ be the set containing those vertices of degree more
  than two. Observe that
  every connected component of $G \setminus X$ is either a connected
  component of $G \setminus C$ (hence it is basic) or a part of $C$, i.e.\ 
  a path whose internal vertices are of degree two in $G$ (which is basic as
  well). As $|X| \leq 6$, $X$ satisfies the desired properties.
\end{proof}

\subsection{Well-quasi-ordering \texorpdfstring{$\gem$}{Gem}-induced minor-free graphs}

In this section we give a proof of \autoref{t:exclgem-wqo}.
We proved in the previous section that the structure of 2-connected $\gem$-induced minor-free graphs is essentially very simple, with building blocks being cographs and long induced paths. To conclude that labeled 2-connected $\gem$-induced minor-free graphs are wqo by induced minor relation, we need the fact that the building blocks, in particular labeled cographs, are themselves well-quasi-ordered by the induced minor relation. For this we rely on the following extension of the results of Damaschke~\cite{JGT:JGT3190140406} to labeled graphs due to Atminas and Lozin.

\begin{proposition}[from \cite{Atminas14b}]
	\label{t:cographs-wqo}
	For any wqo $(\Sigma, \lleq)$, the class of $(\Sigma, \lleq)$-labeled cographs is wqo by induced subgraphs.
\end{proposition}

Fellows et al.\ proved that if a (labelled) graph class $\mathcal{G}$ is wqo by subgraphs, then for every $k \in \N$, the class of graphs that have $k$ vertices whose deletion results in a graph of $\mathcal{G}$ is also wqo by subgraphs~\cite[Theorem~4]{fellows2009well}. We here prove a counterpart of this result for labeled induced minors.
For every graph class $\mathcal{G}$ ad every integer $k$, we denote by $\mathcal{G}^{(+k)}$ the class of graphs that have at most $k$ vertices whose deletion results in a graph of $\mathcal{G}$.

\begin{lemma}\label{l:plusk}
Let $\mathcal{G}$ be a class of graphs such that for every wqo $(\Sigma, \lleq)$, the class $\lab_{(\Sigma, \lleq)}(\mathcal{G})$ is wqo by induced minors. Then for every $k\in \N$, and every wqo $(\Sigma', \lleq')$, the class $\lab_{(\Sigma', \lleq')}(\mathcal{G}^{(+k)})$ is wqo by induced minors.
\end{lemma}

\begin{proof}
  Graphs of $\mathcal{G}^{+k}$ can be partitioned into $k+1$ classes depending on the minimum number of vertices to delete in order to obtain a graph of $\mathcal{G}$.
  In each of these classes the partition can be refined depending on the subgraph induced by the vertices to remove (for an arbitrarly choice of these vertices).
  Since a finite union of wqos is a wqo, it is enough to focus on the class $\mathcal{H}$ of graphs that have a set $X$ of exactly $k$ vertices such that:
  \begin{itemize}
  \item $G\setminus X \in \mathcal{G}$;
  \item after forgeting the labels, $G[X]$ is (isomorphic to) the same unlabeled graph $H$.
  \end{itemize}
  We fix an ordering $h_1, \dots, h_k$ of the vertices of $H$.
  Let $\Sigma = \Sigma' \times \fpowset(\intv{1}{k})$ and let $\lleq$ be the order $\lleq'  \times =$ on $\Sigma$.
  For every $\lambda \in \fpowset(\Sigma)$, we define $\pi(\lambda)$ as the union of the sets $A \subseteq \intv{1}{k}$ such that $(s, A) \in \lambda$ for some $ s\in \Sigma'$.
  We also set $\tau(\lambda) = \{s,\ \exists A\subseteq \intv{1}{k},\ (s, A) \in \lambda\}$.
  
  Informally, the label of a vertex will encode some adjacencies together with a label and the functions $\pi$ and $\tau$ can be used to retrieve this information.
  Let $f$ be the function that, given a $k$-uple $(\lambda_1, \dots, \lambda_k) \in \fpowset(\Sigma')^k$ and a graph $G \in \lab_{(\Sigma, \lleq)}(\mathcal{G})$, constructs the graph $f(G)$ from the disjoin union of $H$ and $G$ as follows:
  \begin{itemize}
  \item label $h_i$ with $\lambda_i$, for every $i\in \intv{1}{k}$;
  \item make every vertex $v\in V(G)$ adjacent to the vertices of $\{h_1, \dots, h_k\}$ whose indices are given by $\pi(\lambda_G(v))$;
  \item relabel every vertex $v\in V(G)$ with the label given by $\tau(\lambda_G(v))$.
  \end{itemize}

  One can easily check that $\mathcal{H}$ is included in the image of~$f$. Besides, the domain of $f$ is a wqo, as it is a Cartesian product of wqos. In order to show that $\mathcal{H}$ is wqo by $\linm$, we can prove that $f$ is monotone, according to \autoref{wqomagic}.
  As usual we focus on proving two implications:
  \begin{align*}
    \forall \lambda,\lambda', \lambda_2, \dots, \lambda_k \in \fpowset(\Sigma')&, \forall G \in \fpowset(\Sigma'),\\
    &\lambda \lleq'^\star \lambda' \Rightarrow f(\lambda, \lambda_2, \dots, \lambda_k, G) \linm f(\lambda', \lambda_2, \dots, \lambda_k, G),\ \text{and}\\
    \forall \lambda_1,\lambda_2, \dots, \lambda_k \in \fpowset(\Sigma')&, \forall G,G' \in \fpowset(\Sigma'),\\
    &G \linm G' \Rightarrow f(\lambda_1, \lambda_2, \dots, \lambda_k, G) \linm f(\lambda_1, \lambda_2, \dots, \lambda_k, G').
  \end{align*}
We do not consider the cases where the other parameters of $f$ are different since these situations are symmetric to that addressed with the first implication.

\smallskip
  \noindent \textit{First implication.}
  We can obtain $f(\lambda, \lambda_2, \dots, \lambda_k, G)$ from $f(\lambda', \lambda_2, \dots, \lambda_k, G)$ by contracting to $\lambda$ the label $\lambda'$ carried by $h_1$.
  Hence $f(\lambda, \lambda_2, \dots, \lambda_k, G) \linm f(\lambda', \lambda_2, \dots, \lambda_k, G)$.

  \smallskip
  \noindent \textit{Second implication.} For the sake of clarity we set $R = f(\lambda_1, \lambda_2, \dots, \lambda_k, G)$ and $R' = f(\lambda', \lambda_2, \dots, \lambda_k, G)$.
  Let $\mu$ be an induced-minor model of $G$ in $G'$. Let $\mu' \colon V(R) \to  \fpowset(V(R'))$ be the function which maps a vertex $v \in V(R)$ to $\{v\}$ if $v \in V(H)$ and $\mu(v)$ otherwise.
  
  Let us show that $\mu'$ is an induced minor model of $R$ in $R'$.
  The fact that $\mu'$ is a containment model either follow from the properties of $\mu$ (for vertices of $V(G)$) or is trivial (for vertices of $V(H)$).
  To show that it is an induced minor model, we have to prove that for every pair of adjacent vertices $x,y$ of $R$, the sets $\mu'(x)$ and $\mu'(y)$ are connected by an edge in $R'$. Again this is straightforward when $x,y \in V(H)$ or $x,y \in V(G)$, hence we assume that $x \in V(H)$ and $y \in V(G)$. Without loss of generality we may assume that $x = h_1$.
  By construction and since $h_1$ are $y$ adjacent in $R$, the label $\lambda_G(y)$ of $y$ in $G$ contains a pair $(s,A)$ for some $s \in \Sigma'$ and some $A \in \fpowset(\intv{1}{k})$ that contains 1.
  Besides, by definition of $\mu$, we have $\lambda_G(y) \lleq^\star \bigcup_{z \in \mu(y)}\lambda_{G'}(z)$. Hence there is a vertex $z \in \mu(y)$ sucht that $\lambda_{G'}(z)$ contains a pair $(s',A')$ 
  for some $s' \in \Sigma'$ and some $A' \in \fpowset(\intv{1}{k})$ that contains 1. Therefore there is an edge between a vertex of $\mu'(y)$ and one of $\mu'(x)$, as desired. This proves that $\mu'$ is an  induced minor model of $R$ in $R'$. Hence $R \linm R'$, as required.
\end{proof}

\begin{proof}[Proof of \autoref{t:exclgem-wqo}.]
  According to~\autoref{p:2c-labels}, it is enough to prove
  that for every wqo $(S, \lleq)$, the class of $(S, \lleq)$-labeled
  2-connected graphs which does not contain $\gem$ as induced minor
  is well-quasi-ordered by induced minors. By \autoref{t:decgem},
  these graphs can be turned into a disjoint union of paths and
  cographs by the deletion of at most six vertices.
  As a consequence of \autoref{l:plusk} (for $k=6$ and where $\mathcal{G}$ is the class of disjoint unions of cographs and paths), these graphs are well-quasi-ordered by induced minors and we are done.
\end{proof}

\section{Concluding remarks}
\label{sec:fin}

In this paper we characterized all
graphs $H$ such that the class of $H$-induced minor-free graphs is a
well-quasi-order with respect to the induced minor relation. This
allowed us to identify the \textsl{boundary graphs} ($\gem$ and
$\htg$) and to give a dichotomy theorem for this problem.
Our proof relies on two decomposition theorems and a study of infinite
antichains of the induced minor relation. This work can be seen
as the induced minor counterpart of previous dichotomy theorems by
Damaschke~\cite{JGT:JGT3190140406} and
Ding~\cite{Ding:1992:SW:152782.152791}.

The question of characterizing ideals which are
well-quasi-ordered can also be asked for ideals defined by forbidding
several elements. To the knowledge of the authors, very little~\cite{lewchalermvongs2015well}
is known on these classes for the induced minor relation,
and thus their investigation could be the next target in the study of induced minors
ideals. Partial results have been obtained when considering the
induced subgraph relation~\cite{Korpelainen20111813, Lozin13}.


\end{document}